\DeclareFontFamily{U}{MnSymbolC}{}
\DeclareSymbolFont{MnSyC}{U}{MnSymbolC}{m}{n}
\DeclareFontShape{U}{MnSymbolC}{m}{n}{
    <-6>  MnSymbolC5
   <6-7>  MnSymbolC6
   <7-8>  MnSymbolC7
   <8-9>  MnSymbolC8
   <9-10> MnSymbolC9
  <10-12> MnSymbolC10
  <12->   MnSymbolC12}{}
\DeclareMathSymbol{\intprod}{\mathbin}{MnSyC}{'270}
\newtheoremstyle{problemstyle}
{\topsep} 
{20pt} 
{} 
{} 
{\scshape} 
{\newline} 
{1em} 
{} 
\newtheorem*{fact}{Fact}
\newtheorem{defs}{Definition}
\newtheorem{prop}{Proposition}
\newtheorem{egs}{Example}
\newtheorem{cor}{Corollary}
\newtheorem{lem}{Lemma}
\newtheorem{thm}{Theorem}
\theoremstyle{problemstyle}
\newtheorem*{rem}{Remark}
\newcommand{\supp}{\text{supp}}
\newcommand{\id}{\text{Id}}
\newcommand{\mL}{\mathcal{L}}
\newcommand{\mH}{\mathcal{H}}
\newcommand{\ra}{\rightarrow}
\newcommand{\bR}{\mathbb{R}}
\newcommand{\bC}{\mathbb{C}}
\newcommand{\bZ}{\mathbb{Z}}
\newcommand{\bN}{\mathbb{N}}
\newcommand{\msU}{\mathscr{U}}
\newcommand{\mfX}{\mathfrak{X}}
\newcommand{\Hom}{\text{Hom}}
\newcommand{\End}{\text{End}}
\newcommand{\bge}{\begin{equation*}}
\newcommand{\ene}{\end{equation*}}
\newcommand{\bra}{\langle}
\newcommand{\ket}{\rangle}
\title{Kostant-Souriau Prequantization}
\author{Ethan Ross }
\date{Summer 2019}
\begin{document}

\maketitle
\section{Introduction}
Our universe is a quantum one. This can be seen in experiments, like Tonomura's 1989 electron double-slit experiment \cite{allday}. The set up of this experiment is an electron emitter, a detector screen, and a panel with two slits in between the emitter and the screen. A beam of electrons is then fired from the emitter to the detector screen after passing through the two slits. As expected, each electron leaves a point impression on the screen, but the resulting density pattern is that of a wave interfering with itself. This result cannot be explained by classical means. 

Why we even study classical mechanics beyond historical or mathematical interest? For the physicist, the answer lies in the fact that classical mechanics provides models for producing quantum theories. For instance, the Schr\"{o}dinger equation was \enquote{derived} in formal analogy with the Hamiltonian formulation of classical mechanics.

This brings us to the subject of this report: a method by which we can take a classical system and \enquote{quantize} it to obtain the corresponding quantum system. The key insight is due to Dirac \cite{dirac}, and is encoded in his famous Dirac axioms. In a nutshell, he noticed that the observables of a classical system and the observables of a quantum system have key formal similarities. Hence, a scheme for producing a quantum system from a classical one should respect these properties.

However, as shown by Groenwald and Van Hove, if the Dirac axioms are taken to be the definition of  a quantization, then no quantization can exist. See Abraham and Marsden \cite{marsden} for a proof of this fact. Rather than giving up, we weaken the axioms and study what's called a prequantization, the first step towards a quantization scheme.

The prequantization scheme reviewed in this report is due to  Kostant \cite{kostant} and Souriau \cite{souriau}. As a summary, if a symplectic manifold $(M,\omega)$ represents the state space of a classical system, then a Kostant-Souriau prequantization is a line bundle over $M$ with some extra structure compatible with the symplectic form $\omega$. It provides two things:
\begin{itemize}
    \item[(i)] a Hilbert space, the state space of a quantum system, and
    \item[(ii)] a map from the classical observables $C^\infty(M)$ to the quantum observables on $\mH$ which respects their formal properties.
\end{itemize}
The existence of such a line bundle turns out to be equivalent to an integrality condition on the symplectic form $\omega$, as the following theorem shows.
\begin{thm} (Kostant-Souriau)

 A Kostant-Souriau prequantization of a symplectic manifold $(M,\omega)$ exists if and only if $[\omega]$ is integral. 
\end{thm}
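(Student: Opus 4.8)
The plan is to reduce the statement to a fact about the first Chern class of a Hermitian line bundle. Recall that a Kostant--Souriau prequantization of $(M,\omega)$ consists of a Hermitian line bundle $L\to M$ together with a metric connection $\nabla$ whose curvature two-form $F_\nabla$ (which is $i\bR$-valued) equals a fixed nonzero constant times $-i\omega$; rescaling, we normalize so that $F_\nabla=-i\omega$. The theorem is then equivalent to the assertion that such $(L,\nabla)$ exists if and only if $\tfrac1{2\pi}[\omega]$ lies in the image of $H^2(M;\bZ)\to H^2(M;\bR)$, which is the content of \enquote{$[\omega]$ is integral} once the normalization in the definition is fixed. The forward implication is immediate from Chern--Weil theory: given a prequantization, $c_1(L)=\bigl[\tfrac{i}{2\pi}F_\nabla\bigr]=\bigl[\tfrac1{2\pi}\omega\bigr]$, and since $c_1(L)$ is by definition the image of an integral class, $\tfrac1{2\pi}[\omega]$ is integral.

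The substantive direction is the converse, and I would handle it by an explicit Čech--de Rham construction in the spirit of Kostant and Souriau. Choose a good cover $\{U_j\}$ of $M$. By the Poincaré lemma there are real one-forms $\theta_j$ with $\omega|_{U_j}=d\theta_j$; on each double overlap $\theta_j-\theta_k$ is closed, hence $\theta_j-\theta_k=df_{jk}$ for some $f_{jk}\in C^\infty(U_j\cap U_k)$; and on each triple overlap the function $a_{jkl}:=f_{jk}+f_{kl}+f_{li}$ is locally constant, with $(a_{jkl})$ a Čech $2$-cocycle whose class corresponds to $[\omega]$ under the de Rham isomorphism. The integrality hypothesis is precisely what lets us adjust the $f_{jk}$ by locally constant amounts so that $\tfrac1{2\pi}a_{jkl}\in\bZ$ for all indices. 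Once this is arranged, the functions $g_{jk}:=\exp(-i f_{jk})\colon U_j\cap U_k\to U(1)$ satisfy the cocycle identity $g_{jk}g_{kl}g_{li}=\exp(-i a_{jkl})=1$, so they are the transition functions of a Hermitian line bundle $L$; and the local one-forms $\{-i\theta_j\}$ obey $(-i\theta_j)-(-i\theta_k)=-i\,df_{jk}=g_{jk}^{-1}dg_{jk}$, hence glue to a metric connection $\nabla$ on $L$ (compatibility with the metric being automatic since the $g_{jk}$ are unitary) with curvature $d(-i\theta_j)=-i\omega$ on each $U_j$. This $(L,\nabla)$ is the desired prequantization.

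I expect the main obstacle to be exactly the adjustment step in the converse: passing from the $\bR$-valued Čech cocycle canonically attached to $\omega$ to an integer-valued one, which is where integrality genuinely enters and which requires a careful matching of the de Rham and Čech descriptions of $[\omega]$ (in particular that the good cover computes both $\check H^\ast(M;\bR)$ and $\check H^\ast(M;\bZ)$). The remaining ingredients --- the Poincaré lemma, the cocycle bookkeeping, and producing a metric compatible with a connection having unitary transition functions --- are routine. A slicker but less explicit alternative, which I would mention, bypasses the cocycle gymnastics: from the smooth exponential sheaf sequence $0\to\bZ\to C^\infty(-,\bR)\to C^\infty(-,U(1))\to 0$ and the fact that the sheaf $C^\infty(-,\bR)$ is fine, hence acyclic, the connecting homomorphism identifies $H^1(M;C^\infty(-,U(1)))$ --- isomorphism classes of Hermitian line bundles --- with $H^2(M;\bZ)$; integrality then produces a line bundle $L$ with $c_1(L)=\tfrac1{2\pi}[\omega]$, and starting from any metric connection $\nabla_0$ with curvature $-i\omega_0$ one picks $\alpha$ with $\omega_0-\omega=d\alpha$ and replaces $\nabla_0$ by $\nabla_0+i\alpha$ to force the curvature to be exactly $-i\omega$.
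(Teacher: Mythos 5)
Your proposal is correct, and in the substantive ($\Leftarrow$) direction it is essentially the paper's argument: a good cover, local primitives $\theta_j$ of $\omega$, potentials $f_{jk}$ for $\theta_j-\theta_k$, an integer-valued \v{C}ech $2$-cocycle on triple overlaps, and transition functions $e^{\pm 2\pi i f_{jk}}$ gluing to a Hermitian line bundle whose covariant derivative has the $\theta_j$ as local connection forms; the paper carries this out via its Theorem 5 and verifies metric compatibility exactly as you indicate, using that the local forms are real and the transition functions unitary. You also correctly isolate the one place integrality genuinely enters --- adjusting the real \v{C}ech cocycle attached to $\omega$ to an integer one --- which the paper handles by quoting Kostant's criterion (its Theorem 7); in fact the paper is terser than you are at precisely this step. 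Where you genuinely diverge is the forward direction: you cite Chern--Weil theory and the integrality of $c_1(L)$, whereas the paper proves this from scratch by choosing unit-norm local frames, taking branches of $\log g_{ij}$, and exhibiting $\mu_{ijk}=-f_{ij}+f_{ik}-f_{jk}$ as an integer \v{C}ech cocycle representing $[\omega]$ --- i.e.\ it unwinds the Chern--Weil statement into the same explicit cocycle computation that your converse uses; your version is shorter but outsources the content the paper wants on display. Your sheaf-theoretic alternative via the exponential sequence is a genuine shortcut the paper does not take. Two cosmetic points: your normalization ($F_\nabla=-i\omega$, integrality of $\tfrac{1}{2\pi}[\omega]$) differs from the paper's ($R^\nabla=2\pi i\omega$, integrality of $[\omega]$ itself), which you do flag; and your triple-overlap expressions $f_{jk}+f_{kl}+f_{li}$ and $g_{jk}g_{kl}g_{li}$ contain an index slip (the last subscript should be $lj$).
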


As the name would suggest, Kostant-Souriau prequantization does not provide an actual quantization of a classical system. In some sense, the Hilbert space it constructs \enquote{depends on too many variables,} which the Heisenberg uncertainty principle \cite{bran} explicitly forbids. Polarizations, which \enquote{cut down on half the variables,} are one solution, but they introduce their own technical difficulties. Metaplectic corrections and half-forms are then brought in to fix the issues arising from polarizations, but these half-forms don't always exist. The interested reader should consult Bates and Weinstein \cite{sean} or Guillemin and Sternberg \cite{guil} for more details.

This report is divided into three sections. In the first, the basic ideas from classical and quantum physics will be discussed, which will then lead us to a description of Kostant and Souriau's construction. The second section will delve into the theory of complex line bundles and the various structures associated to them, such as covariant derivatives, connections, and local systems. In the final section, a brief review of relevant ideas from \v{C}ech cohomology will be provided, paving the way to the proof of Theorem 1. 

\newpage
\subsection{Notation}
\begin{itemize}
    \item $C^\infty(M,N)$ - smooth maps between manifolds $M$ and $N$.
    \item $C^\infty(M)=C^\infty(M,\bR)$
    \item $\Gamma(U,A)$ - sections of a fibre bundle with total space $A$ and $U$ an open subset of base space. 
    \item $TM$ - tangent bundle of smooth manifold.
    \item $\mfX(M)$ - vector fields on a manifold.
    \item $\mfX_\bC(M)$ - complex vector fields on a manifold.
    \item $\Omega^p(M)$ - differential $p$-forms on a manifold $M$.
    \item $H^p(M)$ - $p$-th De Rham cohomology group of $M$.
    \item $\check{H}^p(\msU;R)$ - $p$-th \v{C}ech cohomology group with respect to a cover $\msU$ with coefficients in $R$.
    \item $S\mH$ - essentially self-adjoint operators on a Hilbert space $\mH$.
\end{itemize}
\newpage
\tableofcontents
\newpage
\section{Prequantization}
In this first section, we give a brief tour of the ideas from physics that are relevant to geometric prequantization. In particular, we review the symplectic formulation of classical mechanics, and the Schr\"{o}dinger picture of non-relativistic quantum mechanics. We finish the review of physics with motivation for prequantization by example, then leap into the construction of Kostant and Souriau introduced in the previous section.

\subsection{Geometric Mechanics}
We begin with a review of the relevant formalism from geometric classical mechanics. The interested reader can consult Lee \cite{lee}, Arnold \cite{arnold}, or da Silva \cite{ana} for a more detailed treatment.

If $(M,\omega)$ is a symplectic manifold, then since $\omega$ is non-degenerate, we have the map
\bge
\widetilde{\omega}:TM\ra T^*M;\quad v\mapsto \omega(v,-),
\ene
where $T^*M$ denotes the cotagent bundle, is an isomorphism of vector bundles. Using this fact, we can then define Hamiltonian vector fields.

\begin{defs}
Let $(M,\omega)$ be a symplectic manifold. If $f\in C^\infty(M)$, then define the Hamiltonian vector field of $f$, denoted $X_f$ by
\bge
df=\omega(X_f,-).
\ene
\end{defs}

\begin{defs}
A Hamiltonian system is a triple $(M,\omega,H)$, where $(M,\omega)$ is a symplectic manifold, called the phase space, and $H\in C^\infty(M)$, called the Hamiltonian. We call integral curves of $X_H$ trajectories and $C^\infty(M)$ observables.
\end{defs}

\begin{rem}
In this way of stating classical mechanics, we are thinking of $M$ as representing all possible positions and momenta of a system. The level sets of $H$ correspond to allowable energies of trajectories, and the observables correspond to the outcome of a measurement. 
\end{rem}

The observables of a Hamiltonian system $(M,\omega,H)$ can clearly be given a commutative algebra structure via pointwise multiplication. There is however another multiplication structure that we can give $C^\infty(M)$, called the Poisson bracket.

\begin{defs}
Let $f,g\in C^\infty(M)$. Define their Poisson bracket $\{f,g\}\in C^\infty(M)$ by
\bge
\{f,g\}=\omega(X_f,X_g).
\ene
\end{defs}

\begin{prop}
Let $(M,\omega)$ be a symplectic manifold. Then the Poisson bracket \bge
\{\cdot,\cdot\}:C^\infty(M)\times C^\infty(M)\ra C^\infty(M);\quad (f,g)\mapsto  \{f,g\}
\ene
is a Lie bracket. Furthermore, if $f,g\in C^\infty(M)$, then 
\bge
[X_f,X_g]=X_{\{f,g\}}.
\ene
\end{prop}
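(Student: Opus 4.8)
The plan is to establish the two claims in turn, using the standard machinery of Lie derivatives and Cartan's magic formula.

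For the Lie bracket claim, bilinearity and antisymmetry of $\{\cdot,\cdot\}$ are immediate from the bilinearity and antisymmetry of $\omega$ together with the $\bR$-linearity of $f\mapsto X_f$ (which itself follows from the definition $df=\omega(X_f,-)$ and the fact that $\widetilde{\omega}$ is an isomorphism). The only substantive point is the Jacobi identity. Here I would first observe the useful reformulation $\{f,g\}=\omega(X_f,X_g)=df(X_g)=X_g(f)=-X_f(g)$, which lets one pass freely between the symplectic and derivational descriptions of the bracket. Then the Jacobi identity for $\{\cdot,\cdot\}$ can be deduced from the closedness of $\omega$: applying the invariant formula for $d\omega$ to the triple $(X_f,X_g,X_h)$ and using $d\omega=0$, one gets an identity relating the cyclic sum of $X_f(\omega(X_g,X_h))$ to the cyclic sum of $\omega([X_f,X_g],X_h)$, and unwinding both sides in terms of Poisson brackets collapses to $\{\{f,g\},h\}+\{\{g,h\},f\}+\{\{h,f\},g\}=0$. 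This step — extracting Jacobi from $d\omega=0$ — is the main obstacle, in that it is the one place where a genuine computation with the differential form identity is unavoidable; everything else is bookkeeping.

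For the second claim, $[X_f,X_g]=X_{\{f,g\}}$, I would use the characterization of $X_{\{f,g\}}$ as the unique vector field with $\iota_{X_{\{f,g\}}}\omega=d\{f,g\}$, and show that $[X_f,X_g]$ satisfies the same equation; uniqueness (nondegeneracy of $\omega$) then finishes it. To compute $\iota_{[X_f,X_g]}\omega$, recall the identity $\mathcal{L}_{X_f}(\iota_{X_g}\omega)=\iota_{[X_f,X_g]}\omega+\iota_{X_g}(\mathcal{L}_{X_f}\omega)$. The second term vanishes: by Cartan's formula $\mathcal{L}_{X_f}\omega=d\iota_{X_f}\omega+\iota_{X_f}d\omega=d(df)+0=0$, so Hamiltonian vector fields are symplectic. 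Hence $\iota_{[X_f,X_g]}\omega=\mathcal{L}_{X_f}(\iota_{X_g}\omega)=\mathcal{L}_{X_f}(dg)=d(\mathcal{L}_{X_f}g)=d(X_f(g))=d(-\{g,f\})=d\{f,g\}$, using that $\mathcal{L}_{X_f}$ commutes with $d$. This identifies $[X_f,X_g]$ with $X_{\{f,g\}}$.

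One small efficiency worth noting: once the second claim is in hand, the Jacobi identity can alternatively be obtained from it rather than proved independently. Indeed, from $\{f,\{g,h\}\}=X_f(\{g,h\})$ and iterating, the cyclic sum of the double brackets rearranges into an expression governed by $[X_f,[X_g,X_h]]$ applied to functions, and the Jacobi identity for the commutator bracket of vector fields then yields the Jacobi identity for $\{\cdot,\cdot\}$. I would likely present the proof in this order — symplecticity of $X_f$, then $[X_f,X_g]=X_{\{f,g\}}$, then Jacobi as a corollary — since it minimizes direct manipulation of $d\omega$ on a triple of vector fields and keeps the argument self-contained given only Cartan's formula and $d\omega=0$.
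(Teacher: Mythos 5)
Your strategy is the standard one and is sound in outline; the paper itself leaves this proposition to the reader, so there is no competing argument to compare against. Both main ideas --- extracting the Jacobi identity from $d\omega=0$ evaluated on $(X_f,X_g,X_h)$, and computing $\iota_{[X_f,X_g]}\omega$ via the identity $\mathcal{L}_{X_f}\circ\iota_{X_g}-\iota_{X_g}\circ\mathcal{L}_{X_f}=\iota_{[X_f,X_g]}$ together with $\mathcal{L}_{X_f}\omega=0$ --- are correct, and your remark that Jacobi can then be deduced from $[X_f,X_g]=\pm X_{\{f,g\}}$ is not circular, since that identity is obtained using only Cartan's formula and $d\omega=0$.

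There is, however, one sign slip, and it is worth chasing down because it exposes a genuine incompatibility in the conventions of the statement itself. You correctly derive $\{f,g\}=df(X_g)=X_g(f)=-X_f(g)$, which gives $X_f(g)=\{g,f\}$, not $-\{g,f\}$ as you later write. Carrying the correct sign through, your own computation yields
\[
\iota_{[X_f,X_g]}\omega=\mathcal{L}_{X_f}(dg)=d\bigl(X_f(g)\bigr)=d\{g,f\}=-d\{f,g\},
\]
so that $[X_f,X_g]=-X_{\{f,g\}}=X_{\{g,f\}}$. A coordinate check confirms this: with $\omega=dq\wedge dp$ and the paper's convention $df=\omega(X_f,-)$ one finds $X_f=f_p\,\partial_q-f_q\,\partial_p$; taking $f=q^2/2$, $g=p^2/2$ gives $X_f=-q\,\partial_p$, $X_g=p\,\partial_q$, $[X_f,X_g]=p\,\partial_p-q\,\partial_q$, while $\{f,g\}=qp$ and $X_{qp}=q\,\partial_q-p\,\partial_p$. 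Thus with the conventions $df=\omega(X_f,-)$ and $\{f,g\}=\omega(X_f,X_g)$ the displayed identity in the proposition holds only up to sign; to get $[X_f,X_g]=X_{\{f,g\}}$ on the nose one must flip one of the conventions (e.g.\ set $\{f,g\}=\omega(X_g,X_f)$, or define $X_f$ by $\omega(-,X_f)=df$). Your proof as written reaches the stated conclusion only because the erroneous step $X_f(g)=-\{g,f\}$ compensates for this. Everything else --- bilinearity, antisymmetry, the vanishing of $\mathcal{L}_{X_f}\omega$ by Cartan's formula, and the two routes to Jacobi --- is fine.
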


\begin{proof}
Left to the reader.
\end{proof}

The Poisson bracket is valuable since it allows us to compute ''time derivatives" of observables along a trajectory. What I mean by that is if $\Phi_t$ denotes the flow of $X_H$ and $f\in C^\infty(M)$, then $f\circ \Phi_t$ is a measurement of whatever quantity $f$ represents along the trajectory. Lie derivatives are used to show 
\bge
\frac{d}{dt} (f\circ \Phi_t)=\{f,H\}\circ\Phi_t.
\ene
See \cite{ana} for a proof.

We get a very useful corollary from this result 
\begin{cor}
$f\in C^\infty(M)$ is constant along every trajectory of the Hamiltonian system $(M,\omega,H)$ $\iff$ $\{f,H\}=0$. 
\end{cor}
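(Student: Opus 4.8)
The plan is to derive this directly from the identity $\frac{d}{dt}(f\circ\Phi_t) = \{f,H\}\circ\Phi_t$ established just above, where $\Phi_t$ denotes the flow of $X_H$. Both implications amount to reading this identity in the appropriate direction, so there is no substantial obstacle here; the only point requiring a moment's care is making sure that every point of $M$ actually lies on a trajectory, which is guaranteed by the local existence theorem for integral curves of the smooth vector field $X_H$, together with the fact that $\Phi_0$ is the identity.

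For the reverse implication, I would suppose $\{f,H\}=0$. Then along any trajectory $t\mapsto\Phi_t(p)$, the identity gives $\frac{d}{dt}(f\circ\Phi_t)(p) = \{f,H\}(\Phi_t(p)) = 0$ for every $t$ in the domain of the curve. Hence $t\mapsto f(\Phi_t(p))$ is constant, so $f$ takes the constant value $f(p)$ along the entire trajectory through $p$. As $p$ was arbitrary, $f$ is constant along every trajectory.

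For the forward implication, I would suppose $f$ is constant along every trajectory, fix $p\in M$, and take the trajectory $t\mapsto\Phi_t(p)$ with $\Phi_0(p)=p$. By hypothesis $t\mapsto f(\Phi_t(p))$ is constant, so its $t$-derivative at $t=0$ is zero; by the identity this derivative equals $\{f,H\}(\Phi_0(p)) = \{f,H\}(p)$. Therefore $\{f,H\}(p)=0$, and since $p$ was arbitrary, $\{f,H\}=0$ on all of $M$. This closes both directions, and the expected ``hard part'' — namely the time-derivative formula itself — has already been cited from \cite{ana}.
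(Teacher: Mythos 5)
Your proof is correct and is exactly the intended argument: the paper states this corollary as an immediate consequence of the identity $\frac{d}{dt}(f\circ\Phi_t)=\{f,H\}\circ\Phi_t$ and omits the proof entirely. Both of your directions (vanishing derivative along each trajectory, and evaluation at $t=0$ using $\Phi_0=\mathrm{id}$) are the standard reading of that formula, and your remark that every point lies on some trajectory is the only detail worth making explicit.
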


Let's finish off this introduction to geometric mechanics with an example.

\begin{egs}\normalfont \textit{One particle moving in $\bR^3$ subject to a conservative force.}
\newline

Let $F:\bR^3\ra\bR^3$ denote the force and $m$ the mass. If $q:\bR\ra\bR^3$ denotes the trajectory the particle takes when acted on by the force $F$, then Newton tells us that $q$ must satisfy
\bge
F=m\frac{d^2 q}{dt^2},
\ene
a second order ODE. Thus, we are solving for both $q$ and its first derivative. Hence, we choose the manifold which parameterizes this system to be $M=\bR^3\times\bR^3$. Give $M$ linear coordinates $(q,p)=(q_1,q_2,q_3,p_1,p_2,p_3)$. We think of the $p_i$ as the coordinates for the momentum of the particle.

For the symplectic form, choose
\bge
\omega=\sum_{j=1}^3 dq_i\wedge dp_i.
\ene
It's an easy exercise to show that $\omega$ is indeed a symplectic form. Now we need our Hamiltonian. 

A force $F:\bR^3\ra \bR^3$ is said to be conservative if there exists $U:\bR^3\ra\bR$, called the potential energy function of $F$, such that
\bge
F=-\mathrm{grad} \ U.
\ene
Since $U$ has units of energy and the Hamiltonian is supposed to represent the energy of a trajectory, we choose Hamiltonian
\bge
H(q,p)=\frac{|p|^2}{2m}+U(q),
\ene
where $\frac{|p|^2}{2m}$ represents the kinetic energy of the trajectory. This is clearly smooth, hence $(M,\omega,H)$ is a Hamiltonian system. To see that trajectories of this Hamiltonian system correspond to solutions of Newton's equation, we need to first compute $X_H$.

For any $f\in C^\infty(\bR^3\times \bR^3)$, one can show that 
\bge
X_f=\sum_{j=1}^3\bigg(\frac{\partial f}{\partial p_j}\frac{\partial }{\partial q_j}-\frac{\partial f}{\partial q_j}\frac{\partial }{\partial p_j}\bigg)
\ene
and hence a trajectory $\gamma(t)=(q(t),p(t))$ must satisfy
\begin{align*}
(X_H)_{\gamma(t)}&=\sum_{j=1}^3\bigg(\frac{\partial H}{\partial p_j}\frac{\partial }{\partial q_j}-\frac{\partial H}{\partial q_j}\frac{\partial }{\partial p_j}\bigg)\\
&=\gamma'(t)\\
&=\sum_{j=1}^3\bigg(\frac{d q_j}{dt}\frac{\partial }{\partial q_j}+\frac{dp_j}{dt}\frac{\partial }{\partial p_j}\bigg).
\end{align*}
Comparing coefficients, we see that $\gamma$ satisfies the famous Hamilton's canonical equations
\bge
\begin{cases}
\frac{dq_j}{dt}&=\frac{\partial H}{\partial p_j}\\
\frac{dp_j}{dt}&=-\frac{\partial H}{\partial q_j}.
\end{cases}
\ene
In our case, this means the trajectory satisfies
\bge
\begin{cases}
\frac{dq_j}{dt}&=\frac{p_j}{m}\\
\frac{dp_j}{dt}&=-\frac{\partial U}{\partial q_j}
\end{cases}
\ene
Putting these equations together, we must have
\bge
m\frac{d^2q}{dt^2}=-\mathrm{grad} \ U(q)=F
\ene
so we've returned to Newton's equation.
\end{egs}

\begin{rem}
We see here why the classical picture cannot provide an adequate description of the double-slit experiment for an electron. If the electron is a particle, then we expect the screen to record a bunch of dots all focused around the path of the beam. We don't see this. So if the electron is a wave, then we expect a continuous interference pattern. We also don't see this. There is some wave-particle duality of the electron which this formulation is incapable of expressing. 
\end{rem}

\subsection{Schr\"{o}dinger Picture of Quantum Mechanics}
To set up the Schr\"{o}dinger picture of quantum mechanics, we need a list of data that is remarkably similar to that defining a classical mechanical system. We need some object to represent all admissable states a system can be in and some observable that controls the evolution of the system. In classical mechanics, this role was played by the symplectic manifold and the Hamiltonian. In quantum mechanics, this role is played by a Hilbert space and an essentially self-adjoint operator, also called the Hamiltonian. As this review skips most of the important features of quantum mechanics, refer to Folland \cite{folland} or Bransden and Joachain \cite{bran} for more details.

Just for the sake of comparison, I introduce some nonstandard terminology.

\begin{defs}
A quantum Hamiltonian system is a pair $(\mH,H)$, where $\mH$ is a Hilbert space and $H$ is an essentially self-adjoint operator on $\mH$. We call $\mH$ the state space and $H$ the Hamiltonian operator.
\end{defs}

We think of elements of $\mH$ as representing all possible configurations that can occur in a given mechanical system. However, there is an arbitrariness to a state that can't be measured called phase. So, in fact, we say two elements $v,w\in\mH$ represent the same state if there exists $\lambda\in\bC^*$ such that $v=\lambda w$.

We have the states of a quantum system, what are the observables? I.e. what objects represent the outcomes of experiments? It turns out \cite{folland} that observables on a quantum Hamiltonian system can be best interpreted as essentially self-adjoint operators. The set of all such operators is denoted $S\mH$. If $T\in S\mH$ represents some dynamical quantity we wish to measure, then we think of the eigenvalues of $T$ as the the outcomes of said measurement.

The last element of this formalism we need to introduce is dynamics. We will make use of the Schr\"{o}dinger picture to do this.

\begin{defs}
Let $(\mH,H)$ be a quantum Hamiltonian system. A wave function is a map $\Psi:\bR\ra\mH$ such that
\begin{itemize}
    \item[(i)] $\bra \Psi(t),\Psi(t)\ket=1$ for all $t\in\bR$.
    \item[(ii)] $\Psi$ satisfyies the Schr\"{o}dinger equation
    \bge
    i\frac{d}{dt}\Psi(t)=H\Psi(t).
    \ene
\end{itemize}
\end{defs}
A wavefunction $\Psi$ is supposed to represent all properties that an evolving mechanical system can posses for all times. Thus, (i) can be interpreted to mean that the state has probability 1 of existing for all times.

\begin{rem}
A quantum Hamiltonian system is analogous to a classical one. We still think of $H$ as representing the energy that a particular wave function carries with it. The only difference here is that we think of the eigenvalues $\{E_n\}_{n\in\bN}$ of $H$ as the allowed energies, hence the discrete \enquote{quantum} nature of this formulation. Furthermore, the Schr\"{o}dinger equation for a quantum Hamiltonian system is the quantum analog of the classical formula
\bge
\gamma'(t)=(X_H)_{\gamma(t)}
\ene
for integral curves of the Hamiltonian $H$ of a Hamiltonian system $(M,\omega,H)$
\end{rem}

We noted that the observables in the classical case had a Lie algebra structure given by the Poisson bracket. In the quantum case, the operators don't even carry a vector space structure since their domains of definition may not be the same. But if $T:D_T\ra \mH$ and $S:D_S\ra\mH$ are two observables satisfying $D_S=D_T$ and $T(D_T)\subset D_T$, $S(D_S)\subset D_S$, then expressions of the form
\bge
T+S,\quad T\circ S-S\circ T=[T,S]
\ene
make sense. We say $T$ and $S$ are composable if these expressions are defined and essentially self-adjoint.

In the first formulations of quantum mechanics, the observables were always taken to be bounded self-adjoint operators. In this case, the set of quantum observables always carries a Lie algebra structure. However, as we shall see in the coming example, the Hamiltonian of a free particle in $\bR^3$ is not even bounded! 

\begin{egs}\normalfont
\textit{A particle in $\bR^3$ with mass $m$ subject to a conservative force $F$.}
\newline

Since our particle is moving through $\bR^3$ and, as we said before, a wavefunction represents the states of the system through time, the natural Hilbert space is $L^2(\bR^3)$ with the usual Lebesgue measure. Hence, a wave function $\Psi$ is of the form
\bge
\Psi(t)=\psi(q,t),
\ene
where $q=(q_1,q_2,q_3)$ is the coordinate on $\bR^3$.

The Hamiltonian which experimentally works here is
\bge
H=-\frac{1}{2m}\nabla^2+U(q),
\ene
where $\nabla^2$ denotes the Laplacian. Note that $H$ has domain of definition all $f\in L^2(\bR^3)$ with square integrable first and second distributional derivatives. 

Since $\Psi$ satisfies the Schrodinger equation, we have
\bge
i\frac{\partial \psi}{\partial t}=-\frac{1}{2m} \nabla^2\psi+U(q)\psi.
\ene

Note that if we restrict to the case where no force is acting on the particle, then the Schr\"{o}dinger equation reduces to
\bge
i\frac{\partial \psi}{\partial t}= \nabla^2\psi.
\ene
It turns out, all solutions to this equation are of the form
\bge
\psi(r,t)=Ce^{i(q,t)\cdot E}
\ene
where $C\in\bR$ and $E\in\bR^4$ are constants and $\cdot$ denotes the usual scalar product in $\bR^4$. Notice that this is the equation of a standing wave evolving through time.  I say here that this wave-function is not normalized in the sense of Definition 5. This isn't a problem if instead of taking $\bR^3$ as the domain of our wavefunction for each time, we take some box of side length $L$.
\end{egs}

\begin{rem}
The formula we obtained for a free particle moving through $\bR^3$ goes a long way to explaining the outcome of the double-slit experiment discussed in the introduction. It explains the interference pattern on the screen and it explains why the interference pattern is composed of discrete dots.

In that experiment, there are four parts of the trajectory of the electron. 
\begin{itemize}
    \item[(i)] The electron moves freely through space after being fired at the screen.
    \item[(ii)] It interacts with the double-slit.
    \item[(iii)] The particle moves freely through space once again.
    \item[(iv)] Finally it hits the detector screen. 
\end{itemize}
So if we model the electron as a free particle in $\bR^3$ before and after it interacts with the screen, then indeed we will expect an interference pattern on the detector screen. 

So now there's the matter of the discrete dots. If the electron is a wave, then classically we expect a continuous interference pattern, not a discrete one. This is taken care of by what we said a quantum observable does: it returns its eigenvalues after an experiment. In our case, if we give $\bR^3$ coordinates $(q_1,q_2,q_3)$ and suppose that the screen lies in the $(q_1,q_2)-plane$, then if $\psi(q,t)$ is the wavefunction of the electron, the numbers which the screen is recording are 
\bge
\int_\bR^3 q_1|\psi(q,t_0)|^2dq\quad\text{and}\quad \int_\bR^3 q_2|\psi(q,t_0)|^2dq
\ene
where $t_0$ is the time when the electron hits the screen. Since $\psi(q,t)$ represents a wave which is interfering with itself, we get higher magnitude numbers when the wave constructively interferes with itself, and lower magnitude numbers when the wave destructively interferes with itself. These numbers represent the probability that the electron will have a given $q_1$ or $q_2$ coordinate at time $t_0$. Hence, since we are firing many electrons at a screen, we expect the pattern of measurements (the dots where the electrons hit the detector) to be discrete and mimic a wave interference pattern.
\end{rem}

\subsection{Dirac Quantization}
A quantization is a proceedure for taking a classical system and turning it into a quantum one. To get a feel for the Dirac axioms, let's see what we need to do to turn example 1 into example 2.

In the classical case, we have a Hamiltonian system $(M,\omega,H)$ with 
\begin{itemize}
    \item[(i)] manifold $M=\bR^6$ with coordiates $(q,p)=(q_1,q_2,q_3,p_1,p_2,p_3)$,
    \item[(ii)] symplectic form
    \bge
    \omega=\sum_{j=1}^3 dq_j\wedge dp_j,
    \ene
    \item[(iii)] and Hamiltonian
    \bge
    H_{class}=\frac{|p|^2}{2m}+U(q).
    \ene
\end{itemize}

In the quantum case, we have a quantum Hamiltonian system $(\mH,H)$, with
\begin{itemize}
    \item[(i)] Hilbert space $\mH=L^2(\bR^3)$,
    \item[(ii)] and Hamiltonian 
    \bge
    H_{quant}=-\frac{1}{2m}\nabla^2+U(q)
    \ene
\end{itemize}

We want a map $Q:C^\infty(\bR^3)\ra S\mH$ such that $Q(H_{class})=H_{quant}$. If we impose $Q$ is linear, then
\bge
Q\bigg(\frac{|p|^2}{2m}+U(q)\bigg)=\frac{1}{2m}\sum_{j=1}^3 Q(p_j^2)+Q(U(q))
\ene
In order for this to return the Schr\"{o}dinger equation discussed in example 2 for any choice of $U(q)$, we impose
\bge
\begin{cases}
Q(1)&=1\\
Q(q_i)&=q_i\\
Q(p_j)&=i\frac{\partial}{\partial q_j}.
\end{cases}
\ene

Note that in the classical case
\bge
\{q_i,p_j\}=\delta_{ij}
\ene
and that in the quantum case we have
\bge
\bigg[q_i,i\frac{\partial}{\partial q_j}\bigg]=-i\delta_{ij}.
\ene
Thus, by definition of $Q$, we have
\bge
Q(\{q_i,p_j\})=-i[Q(q_i),Q(p_j)].
\ene

This motivates the axioms Dirac gave for a prequantization
\begin{defs}
Let $(M,\omega)$ be a symplectic manifold. A prequantization of $(M,\omega)$ is a pair $(Q,\mH)$, where $\mH$ is a Hilbert space and $Q:C^\infty(M)\ra S\mH$ is a map such that
\begin{itemize}
    \item[(i)] $Q(f)$ and $Q(g)$ are composable for all $f,g\in C^\infty(M)$.
    \item[(ii)] $Q$ is linear.
    \item[(iii)] If $1\in C^\infty(M)$ is the constant function $1$, then $Q(1)=2\pi I$, where $I:\mH\ra\mH$ is the identity.
    \item[(iv)] $Q(\{f,g\})=-i[Q(f),Q(g)]$
\end{itemize}
\end{defs}

\begin{rem}
One thing to note about the axioms as I've presented them is that I've introduced the factor of $2\pi$ in axiom (iii). This is to keep in line with the theory of prequantization as presented by Kostant \cite{kostant}. It can be done away with, but we keep the $2\pi$ since it cleans up the statements of many of the theorems. 
\end{rem}

\subsection{Kostant-Souriau Prequantization}
Let $(M,\omega)$ be a symplectic manifold with $\dim M=2n$. Our goal this section is to define a prequantum line bundle over $M$, then demonstrate how this induces a prequantization. The next section will then deal with when such a prequantum line bundle exists. We will mostly be following \cite{lerman} and \cite{sean}.

The first step is to define a Hilbert space from the manifold. For some motivation, consider the following example.

\begin{egs}
Since $\omega$ is non-degenerate, $\omega^n$ is a volume form. So a natural choice for a Hilbert space would be the $L^2$ completion of
\bge
\{f\in C^\infty(M,\bC) \ | \ \supp f\text{ is compact}\}
\ene
with respect to the inner product
\bge
(f,g):=\int_M f\overline{g}\omega^n.
\ene
Notice that $C^\infty(M,\bC)$ can be identified with the space of sections of the trivial complex line bundle $M\times\bC\ra M$.
\end{egs}

So we see that we can obtain a Hilbert space from the trivial bundle on a symplectic manifold. This is due to the fact that the trivial bundle automatically comes equipped with a fibre-wise Hermitian metric. So more generally, we will want to consider all line bundles with Hermitian metrics over our manifold in order to have a large class of candidates for pre-quantization. 

\begin{defs}
A line bundle over $M$ is fibre bundle $L\xrightarrow{\pi} M$ such that 
\begin{itemize}
    \item[(i)] For all $x\in M$, $L_x:=\pi^{-1}(x)$ has the structure of a 1-dimensional complex vector space.
    \item[(ii)] For all $x\in M$, there exists open set $U\subset M$ about $x$ and a diffeomorphism
    \bge
    \psi:\pi^{-1}(U)\ra U\times\bC
    \ene
    such that for all $y\in U$, $\psi|_{L_y}:L_y\ra \{y\}\times\bC$ is a linear isomorphism.
\end{itemize}
\end{defs}

\begin{defs}
Let $U\subset M$ be an open subset. A local section of $L\ra M$ is a smooth function $s:U\ra L$ such that $\pi\circ s=\id_U$. Let $\Gamma(U,L)$ denote all such local section. Define $\Gamma(L):=\Gamma(M,L)$ and call such sections global. Let $\Gamma_c(L)$ denote compactly supported global sections.
\end{defs}

As in the example, we want to use the fact that $\omega$ is non-degenerate to produce an inner product on compactly supported sections. To do this, we need a Hermitian form.

\begin{defs}
A Hermitian structure on $L\ra M$ is a section $\bra,\ket\in\Gamma(L^*\otimes L^*)$ such that for all $x\in M$, $\bra,\ket_x:L_x\times L_x\ra \bC$ is a Hermitian inner product. Denote a line bundle with a Hermitian structure by the pair $(L,\bra,\ket)$ and call it a Hermitian line bundle.
\end{defs}

We can give $\Gamma_c(L)$ a pre-Hilbert space structure as follows. If $s_1,s_2\in\Gamma_c(L)$, then $\bra s_1,s_2\ket:M\ra \bC$ is a smooth compactly supported function. Hence, is integrable. Define
\bge
(s_1,s_2):=\int_M\bra s_1,s_2,\ket \omega^n.
\ene

It's easy to show the following fact.

\begin{fact}
$(\Gamma_c(L),(,))$ is a pre-Hilbert space.
\end{fact}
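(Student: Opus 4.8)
The plan is to verify directly the defining axioms of a complex pre-Hilbert space: that $\Gamma_c(L)$ is a $\bC$-vector space, that $(\cdot,\cdot)$ is a well-defined sesquilinear form, that it is Hermitian-symmetric, and that it is positive definite. None of the steps is deep; the point is simply to keep track of where smoothness, compact support, and non-degeneracy of $\omega^n$ each get used.

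First I would check that $\Gamma_c(L)$ is a $\bC$-vector space under pointwise operations. Given $s_1,s_2\in\Gamma_c(L)$ and $\lambda\in\bC$, the maps $x\mapsto s_1(x)+s_2(x)$ and $x\mapsto\lambda s_1(x)$ are again smooth sections — this is a local statement, and in a local trivialization they become ordinary smooth $\bC$-valued functions — and $\supp(s_1+s_2)\subseteq\supp s_1\cup\supp s_2$ is compact, as is $\supp(\lambda s_1)\subseteq\supp s_1$, while the zero section has empty support. Next, for $s_1,s_2\in\Gamma_c(L)$ the function $\bra s_1,s_2\ket\colon M\ra\bC$, $x\mapsto\bra s_1(x),s_2(x)\ket_x$, is smooth because the Hermitian structure is a smooth section of $L^*\otimes L^*$, and it is supported inside $\supp s_1\cap\supp s_2$, hence compactly supported; since $\omega^n$ is a volume form it orients $M$, so the integral $\int_M\bra s_1,s_2\ket\,\omega^n$ converges and $(\cdot,\cdot)$ is well-defined.

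Then I would deduce sesquilinearity and Hermitian symmetry by the ``pointwise, then integrate'' principle. For each fixed $x$, $\bra\cdot,\cdot\ket_x$ is a Hermitian inner product on $L_x$, so $\bra\lambda s_1+s_1',\,s_2\ket_x=\lambda\bra s_1,s_2\ket_x+\bra s_1',s_2\ket_x$ and $\bra s_1,s_2\ket_x=\overline{\bra s_2,s_1\ket_x}$ for all $x$. Integrating these identities over $M$, using linearity of the integral and the fact that integration commutes with complex conjugation, yields linearity of $(\cdot,\cdot)$ in the first slot, conjugate-linearity in the second, and $(s_1,s_2)=\overline{(s_2,s_1)}$.

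Finally, positivity. For every $x$ we have $\bra s(x),s(x)\ket_x\ge 0$, so $\bra s,s\ket$ is a non-negative smooth compactly supported function and $(s,s)=\int_M\bra s,s\ket\,\omega^n\ge 0$. If $(s,s)=0$, then the integral of the non-negative continuous function $\bra s,s\ket$ against the nowhere-vanishing top form $\omega^n$ vanishes, which forces $\bra s,s\ket\equiv 0$ on $M$; since $\bra\cdot,\cdot\ket_x$ is positive definite on $L_x$, this gives $s(x)=0$ for every $x$, i.e.\ $s=0$. Hence $(\cdot,\cdot)$ is an inner product and $(\Gamma_c(L),(\cdot,\cdot))$ is a pre-Hilbert space. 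The one mildly non-formal implication — the place I would flag as the ``hard part,'' though it is entirely standard — is this last step, which relies precisely on $\omega^n$ being a volume form: one needs $\omega^n$ to be nowhere zero so that a non-negative integrand with vanishing integral must itself vanish identically.
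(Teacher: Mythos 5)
Your verification is correct and is exactly the routine check the paper leaves to the reader (it states this Fact without proof, remarking only that it is ``easy to show''). You correctly isolate the one point of substance --- that positive definiteness of $(\cdot,\cdot)$ requires $\omega^n$ to be nowhere vanishing so that a non-negative continuous integrand with zero integral vanishes identically --- so there is nothing to add.
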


Let $\mH$ denote the $L^2$ completion of $\Gamma_c(L)$. Our goal now is to associate an unbounded self-adjoint operator on $\mH$ to each smooth function on $M$.  To motivate the construction, let's return to the trivial bundle.

\begin{egs}
Let $f\in C^\infty(M)$ and consider the trivial bundle $M\times\bC\ra M$. We want to make $f$ into a linear operator on $C^\infty(M,\bC)$. One way to do this is to define
\bge
m_f:C^\infty(M,\bC)\ra C^\infty(M,\bC);\quad g\mapsto fg.
\ene
This is trivially linear. 

The other way to make $f$ into a linear operator on $C^\infty(M,\bC)$ makes use of the symplectic structure on $M$. Let $X_f\in\mfX(M)$ be the Hamiltonian vector field of $f$. Note that the map
\bge
C^\infty(M)\ra C^\infty(M);\quad g\mapsto X_fg.
\ene
is a real linear map. Extending $\bC$-linearly, $X_f$ is then a linear operator on $C^\infty(M,\bC)$. 
\end{egs}

Given a line bundle over $M$, we want to somehow lift the two operators defined in the previous example to act on arbitrary sections. To do this, we need the concept of the covariant derivative. 

\begin{defs}
A covariant derivative $\nabla$ is a rule such that for all open $U\subset M$ there is a $\bC$ bilinear map
\bge
\nabla_U:\mfX_\bC(U)\times \Gamma(U,L)\ra \Gamma(U,L);\quad (\xi,s)\mapsto \nabla_\xi s,
\ene
where $\mfX_\bC(U)$ denotes local complex vector fields on $U$, such that
\begin{itemize}
    \item[(i)] If $U\subset V$, $\xi\in\mfX_\bC(V)$ and $s\in \Gamma(V,L)$, then
    \bge
    \bigg(\nabla_{\xi}s\bigg)\bigg|_{U}=\nabla_{\xi|_U}(s|_U).
    \ene
    \item[(ii)] If $f\in C^\infty(U,\bC)$, $\xi\in\mfX_\bC(U)$, and $s\in\Gamma(U,L)$, then
    \begin{align*}
    \nabla_{f\xi}s&=f\nabla_\xi s\\
    \nabla_\xi (fs)&=(\xi f)s+f\nabla_\xi s.
    \end{align*}
\end{itemize}
\end{defs}

Combining the maps from the example with a covariant derivative and adding some normalization factors, we can define the prequantization map.

\begin{defs}
Let $f\in C^\infty(M)$ be a smooth function, $X_f\in\mfX(M)$ denote its Hamiltonian vector field. Define $Q_f\in \Hom(\Gamma(L),\Gamma(L))$ by
\bge
Q_f:=i \nabla_{X_f}+2\pi m_f,
\ene
where $m_f$ is the multiplication operator introduced in example 4. We call the map
\bge
Q:C^\infty(M)\ra\Hom(\Gamma(L),\Gamma(L)); \quad f\mapsto Q_f
\ene
the pre-quantization map.
\end{defs}

\begin{rem}
If $f\in C^\infty(M)$ is constant, then $X_f=0$. Hence, $\nabla_{X_f}=0$. So, if $f=1$, we then obtain $Q_1=2\pi m_1=2\pi \id$. Thus, the map $Q$ satisfies one of the axioms of a prequantization right away.
\end{rem}

\begin{defs}
Let $(L,\bra,\ket)$ be a Hermitian line bundle over $M$, and $\nabla$ a covariant derivative on $L$. We say $\nabla$ and $\bra,\ket$ are compatible if for all open $U\subset M$, $\xi\in\mfX_\bC(U)$, and $s_1,s_1\in\Gamma(U,L)$
\bge
\xi\bra s_1,s_2\ket=\bra \nabla_\xi s_1,s_2\ket+\bra s_1,\nabla_\xi s_2\ket.
\ene
Denote by $(L,\bra,\ket,\nabla)$ a Hermitian line bundle with compatible covariant derivative.
\end{defs}

\begin{lem}
Let $(L,\bra,\ket,\nabla)$ be a line bundle over $M$ with compatible covariant derivative. Then for all $s_1,s_2\in\Gamma_c(L)$ and $f\in C^\infty(M)$ we have
\bge
(Q_fs_2,s_1)=(s_1,Q_fs_2).
\ene
\end{lem}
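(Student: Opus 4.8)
I read the lemma as the assertion that $Q_f$ is symmetric, i.e.\ $(Q_f s_1, s_2) = (s_1, Q_f s_2)$ for all $s_1, s_2 \in \Gamma_c(L)$. The plan is to split $Q_f = i\nabla_{X_f} + 2\pi m_f$, dispatch the multiplication part immediately (it is manifestly symmetric because $f$ is real), and then reduce the $i\nabla_{X_f}$ part to a vanishing-integral statement that follows from compatibility of $\nabla$ with $\bra,\ket$ together with the fact that a Hamiltonian flow preserves the Liouville volume $\omega^n$.

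First I would note that $Q_f$ genuinely maps $\Gamma_c(L)$ to $\Gamma_c(L)$: $m_f$ clearly preserves supports, and $\nabla_{X_f}$ does too, since by the locality axiom $(\nabla_{X_f}s)|_U = \nabla_{X_f|_U}(s|_U)$, which vanishes wherever $s$ does. Hence every pairing below makes sense. Using that $\bra,\ket$ is linear in the first slot and conjugate-linear in the second, and that $f = \bar f$, expanding gives
\begin{align*}
(Q_f s_1, s_2) &= i\int_M \bra \nabla_{X_f} s_1, s_2\ket\,\omega^n + 2\pi\int_M f\bra s_1,s_2\ket\,\omega^n,\\
(s_1, Q_f s_2) &= -i\int_M \bra s_1, \nabla_{X_f} s_2\ket\,\omega^n + 2\pi\int_M f\bra s_1,s_2\ket\,\omega^n.
\end{align*}
The multiplication terms agree, so the lemma is equivalent to
\bge
\int_M\Big(\bra\nabla_{X_f}s_1,s_2\ket + \bra s_1,\nabla_{X_f}s_2\ket\Big)\,\omega^n = 0.
\ene

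By compatibility of $\nabla$ with $\bra,\ket$, the integrand equals $X_f h$, where $h := \bra s_1, s_2\ket \in C_c^\infty(M)$ since $s_1,s_2$ are compactly supported. So it suffices to show $\int_M (X_f h)\,\omega^n = 0$. Here is the only step with real content: because $X_f$ is a Hamiltonian vector field, Cartan's magic formula gives $\mL_{X_f}\omega = d\,\iota_{X_f}\omega + \iota_{X_f}\,d\omega = d(df) + 0 = 0$, and hence $\mL_{X_f}(\omega^n) = 0$ as well. Therefore
\bge
(X_f h)\,\omega^n = (\mL_{X_f}h)\,\omega^n = \mL_{X_f}(h\,\omega^n) = d\big(\iota_{X_f}(h\,\omega^n)\big),
\ene
the last equality again by Cartan's formula, using that $h\,\omega^n$ is top-degree so $d(h\,\omega^n)=0$. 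Since $h$ has compact support, $\iota_{X_f}(h\,\omega^n)$ is a compactly supported $(2n-1)$-form on the boundaryless manifold $M$, and Stokes' theorem yields $\int_M d\big(\iota_{X_f}(h\,\omega^n)\big) = 0$, which finishes the proof.

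Apart from the bookkeeping, the whole argument is routine; the conceptual heart — and the single step I would flag as the \emph{point} of the lemma — is the observation that the first-order operator $i\nabla_{X_f}$ is formally skew-adjoint precisely because the Hamiltonian flow of $f$ is volume-preserving for $\omega^n$ (the Liouville property), which is exactly what forces the ``integration by parts'' boundary term to vanish. The one technical caveat, the applicability of Stokes' theorem, is harmless here since $M$ has empty boundary and every form involved is compactly supported.
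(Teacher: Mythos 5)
Your proof is correct and follows essentially the same route as the paper: split $Q_f$ into the multiplication and covariant-derivative parts, use compatibility to turn the latter into $\int_M (X_f\bra s_1,s_2\ket)\,\omega^n$, and kill that integral via $\mL_{X_f}\omega^n=0$, Cartan's formula on a top-degree form, and Stokes. Your added remarks --- that $Q_f$ preserves compact supports and that the statement as printed has the arguments permuted (it should read $(Q_f s_1,s_2)=(s_1,Q_f s_2)$) --- are both accurate and worth noting.
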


\begin{proof}
Trivially,
\bge
(fs_1,s_2)=(s_1,fs_2).
\ene
Next, since $\nabla$ and $\bra,\ket$ are compatible, we have
\bge
\int_M\bra \nabla_{X_f}s_1,s_2\ket \omega^n=\int_M X_f\bra s_1,s_2\ket\omega^n-\int_M\bra s_1,\nabla_{X_f}s_2\ket\omega^n.
\ene

Now, since $\mL_{X_f}\omega=0$, we have $\mL_{X_f}\omega^n=0$. Hence,
\bge
\mL_{X_f}(\bra s_1,s_2\ket \omega^n)=\mL_{X_f}(\bra s_1,s_2\ket)\omega^n+\bra s_1,s_2\ket \mL_{X_f}\omega^n=X_f\bra s_1,s_2\ket \omega^n.
\ene
On the otherhand, since $\bra s_1,s_2\ket \omega^n$ is a top form, we have
\bge
\mL_{X_f}(\bra s_1,s_2\ket \omega^n)=X_f\intprod d(\bra s_1,s_2\ket \omega^n)+d(X_f\intprod\bra s_1,s_2\ket\omega^n)=d(X_f\intprod\bra s_1,s_2\ket \omega^n).
\ene
Thus, by Stoke's theorem
\begin{align*}
\int_M X_f\bra s_1,s_2\ket\omega^n=\int_Md(X_f\intprod\bra s_1,s_2\ket \omega^n)=0.
\end{align*}
Hence,
\bge
(\nabla_{X_f}s_1,s_2)=-(s_1,\nabla_{X_f}s_2).
\ene
Putting everything together,
\begin{align*}
(Q_fs_1,s_2)&=([i\nabla_{X_f}+2\pi f]s_1,s_2)\\
&=i(\nabla_{X_f}s_1,s_2)+(2\pi fs_1,s_2)\\
&=-i(s_1,\nabla_{X_f}s_2)+(s_1,2\pi fs_2)\\
&=(s_1,Q_fs_2).
\end{align*}
\end{proof}

All that's left for us is to obtain the relation
\bge
Q_{\{f,g\}}=-i[Q_f,Q_g].
\ene
This is a condition on the curvature of the covariant derivative on $L$

\begin{defs}
Let $\nabla$ be a covariant derivative on $L\ra M$. Define the curvature of $\nabla$, $R\in \Omega^2(M)\otimes\End(L)$ by
\bge
R(\xi,\eta)s=[\nabla_\xi,\nabla_\eta]s-\nabla_{[\xi,\eta]}s,
\ene
where $\xi,\eta$ are local complex vector fields and $s$ is a local section of $L$.
\end{defs}

We now impose the Bohr-Sommerfeld quantization condition on $(M,\omega)$. We demand that $2\pi i\omega$ is the curvature of $\nabla$. That is, for all local complex vector fields $\xi$ and $\eta$ on $M$:
\begin{equation}
[\nabla_\xi,\nabla_\eta]-\nabla_{[\xi,\eta]}=2\pi i\omega(\xi,\eta).
\end{equation}

\begin{lem}
If equation (1) condition holds, then the map
\bge
Q:C^\infty(M)\ra \End(\Gamma_c(L),\Gamma_c(L));\quad f\mapsto Q_f
\ene
satisfies
\bge
Q_{\{f,g}\}=-i[Q_f,Q_g].
\ene
\end{lem}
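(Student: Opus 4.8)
The plan is to compute $[Q_f, Q_g]$ directly by expanding $Q_f = i\nabla_{X_f} + 2\pi m_f$ and $Q_g = i\nabla_{X_g} + 2\pi m_g$, and match the result against $Q_{\{f,g\}} = i\nabla_{X_{\{f,g\}}} + 2\pi m_{\{f,g\}}$, using $X_{\{f,g\}} = [X_f, X_g]$ from Proposition 2. Bilinearity lets me break the commutator into four pieces: the $\nabla$-$\nabla$ term, the two mixed $\nabla$-$m$ terms, and the $m$-$m$ term. The $m$-$m$ term vanishes since multiplication operators commute. So the work is in the other three.

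First I would handle the mixed terms. For a section $s$, $[\nabla_{X_f}, m_g]s = \nabla_{X_f}(gs) - g\nabla_{X_f}s = (X_f g)s$ by the Leibniz rule (ii) in Definition 11; similarly $[m_f, \nabla_{X_g}]s = -(X_g f)s$. Collecting the cross terms in $[i\nabla_{X_f} + 2\pi m_f,\, i\nabla_{X_g} + 2\pi m_g]$ gives $2\pi i\big((X_f g) - (X_g f)\big)$ acting as a multiplication operator. Now I would identify $X_f g = dg(X_f) = \omega(X_g, X_f)$ and $X_g f = df(X_g) = \omega(X_f, X_g)$, so $X_f g - X_g f = 2\omega(X_g, X_f) = -2\{f,g\}$... here I need to be careful with the sign convention: $\{f,g\} = \omega(X_f, X_g)$, and $X_f g = \omega(X_g, X_f) = -\{f,g\}$, while $-X_g f = -\omega(X_f, X_g) = -\{f,g\}$, so the sum is $-2\{f,g\}$. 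Wait — this would give $-4\pi i\, m_{\{f,g\}}$, which is off by a factor; I expect the correct bookkeeping (noting $\{f,g\} = \omega(X_f,X_g) = df(X_g) = X_g f$, hence $X_f g = \omega(X_g,X_f) = -\{f,g\}$) to yield exactly $2\pi i \cdot m_{X_f g - X_g f}$, and reconciling this against the $2\pi m_{\{f,g\}}$ target is precisely where the Bohr–Sommerfeld curvature term must supply the missing piece. This sign/coefficient reconciliation is the step I expect to be the main obstacle, and it is where the stated normalizations (the $i$, the $2\pi$, the $2\pi i$ in equation (1)) all have to conspire correctly.

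For the $\nabla$-$\nabla$ term, I would invoke the curvature definition (Definition 13) together with hypothesis (1): $[\nabla_{X_f}, \nabla_{X_g}]s = \nabla_{[X_f,X_g]}s + R(X_f, X_g)s = \nabla_{X_{\{f,g\}}}s + 2\pi i\,\omega(X_f, X_g)s = \nabla_{X_{\{f,g\}}}s + 2\pi i\,\{f,g\}s$. Multiplying by $i^2 = -1$, this contributes $-\nabla_{X_{\{f,g\}}} - 2\pi i\, m_{\{f,g\}}$ to $[Q_f, Q_g]$.

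Finally I would assemble: $[Q_f, Q_g] = -\nabla_{X_{\{f,g\}}} - 2\pi i\, m_{\{f,g\}} + (\text{mixed-term contribution})$. Multiplying through by $-i$ should produce $-i[Q_f, Q_g] = i\nabla_{X_{\{f,g\}}} + (\text{terms summing to } 2\pi m_{\{f,g\}}) = Q_{\{f,g\}}$, provided the mixed terms contribute $-2\pi i\,m_{\{f,g\}}$ so that together with the curvature's $-2\pi i\,m_{\{f,g\}}$ and the $-i$ prefactor everything lands on $2\pi m_{\{f,g\}}$ with the right sign. The only genuinely delicate point, as flagged, is tracking the Poisson-bracket sign conventions consistently through $X_f g$, $\omega(X_f,X_g)$, and $df(X_g)$; once those are pinned down the identity falls out of pure bookkeeping, with no analytic input needed beyond the algebraic Leibniz rules and the curvature hypothesis. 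I would present the computation cleanly in one displayed \texttt{align*} block to make the cancellation transparent.
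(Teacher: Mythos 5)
Your decomposition is exactly the paper's: expand $[Q_f,Q_g]$, note the $m$--$m$ term cancels, extract $2\pi i\,(X_fg-X_gf)$ from the mixed terms via the Leibniz rule, and feed the $\nabla$--$\nabla$ term into Definition 13 together with equation (1). The one step you defer --- reconciling $X_fg-X_gf$ against $\{f,g\}$ --- is, however, a genuine gap, and the way you propose to close it cannot work: the curvature term is not available to ``supply the missing piece,'' because you have already spent it producing $\nabla_{X_{\{f,g\}}}$. Concretely, with your literal reading $X_fg=\omega(X_g,X_f)=-\{f,g\}$ and $X_gf=\{f,g\}$, the mixed terms contribute $2\pi i(X_fg-X_gf)=-4\pi i\,m_{\{f,g\}}$, while the $\nabla$--$\nabla$ term contributes $i^2[\nabla_{X_f},\nabla_{X_g}]=-\nabla_{X_{\{f,g\}}}-2\pi i\,m_{\{f,g\}}$; these add to $-\nabla_{X_{\{f,g\}}}-6\pi i\,m_{\{f,g\}}$, and multiplying by $-i$ gives $i\nabla_{X_{\{f,g\}}}-6\pi\,m_{\{f,g\}}$ rather than $Q_{\{f,g\}}=i\nabla_{X_{\{f,g\}}}+2\pi\,m_{\{f,g\}}$. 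There is no remaining term that can repair this.

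The resolution is that the identity requires $X_fg-X_gf=+2\{f,g\}$, i.e.\ the convention $\{f,g\}=X_fg=\mathcal{L}_{X_f}g$. This is in fact the convention forced by Proposition 2: with $df=\omega(X_f,-)$ one computes $\iota_{[X_f,X_g]}\omega=d(X_fg)$, so $[X_f,X_g]=X_{\{f,g\}}$ holds precisely when $\{f,g\}=X_fg=\omega(X_g,X_f)$ --- the opposite order from Definition 3 as literally stated (equivalently, one flips the sign in the definition of $X_f$). The paper's own proof makes this substitution silently, writing $\nabla_{X_f}(gs)=\{f,g\}s+g\nabla_{X_f}s$; with that convention the mixed terms give $+4\pi i\,m_{\{f,g\}}$, the curvature gives $-2\pi i\,m_{\{f,g\}}$, and $-i$ times the sum is exactly $Q_{\{f,g\}}$. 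So your structure is correct and identical to the paper's, but you must commit to the sign convention $\{f,g\}=X_fg$ (and flag the clash with Definition 3) rather than hoping the Bohr--Sommerfeld hypothesis absorbs the discrepancy --- it cannot, since you already use it, correctly, to generate the $\nabla_{X_{\{f,g\}}}$ and one copy of the $2\pi i\{f,g\}$ terms.
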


\begin{proof}
Let $f,g\in C^\infty(M)$. Then by equation (1),
\begin{align*}
[\nabla_{X_f},\nabla_{X_g}]&=\nabla_{[X_f,X_g]}+2\pi i\omega(X_f,X_g)\\
&=\nabla_{X_{\{f,g\}}}+2\pi i\{f,g\}.
\end{align*}

Fix a local section $s$. We compute
\begin{align*}
Q_f(Q_gs)&=Q_f(i\nabla_{X_g}s+2\pi gs)\\
&=-\nabla_{X_f}\nabla_{X_g}s+2\pi i\nabla_{X_f}(gs)+2\pi if\nabla_{X_g}s-4\pi^2fgs\\
&=-\nabla_{X_f}\nabla_{X_g}s+2\pi i\{f,g\}s+2\pi i[f\nabla_{X_g}+g\nabla_{X_f}]s+fgs.
\end{align*}
Thus, swapping $f,g$ we compute
\begin{align*}
Q_f(Q_gs)-Q_g(Q_fs)&=-[\nabla_{X_f},\nabla_{X_g}]s+4\pi i\{f,g\}s\\
&=-\nabla_{X_{\{f,g\}}}s-2\pi i\{f,g\}s+4\pi i\{f,g\}s\\
&=iQ_{\{f,g\}}.
\end{align*}
\end{proof}

\begin{defs}
Let $(M,\omega)$ be a symplectic manifold. A Kostant-Souriau prequantum line bundle over $(M,\omega)$ is a Hermitian line bundle $(L,\bra,\ket,\nabla)$ over $M$ with compatible covariant derivative with curvature $R^\nabla=2\pi i\omega$.
\end{defs}

We have thus shown the following result.
\begin{thm}
A Kostant-Souriau prequantum line bundle $(L,\bra,\ket,\nabla)$ over a symplectic manifold $(M,\omega)$ induces a prequantization with Hilbert space the $L^2$ completion of $\Gamma_c(L)$ and prequantization map
\bge
Q:C^\infty(M)\ra \Hom(\Gamma_c(L),\Gamma_c(L));\quad f\mapsto i\nabla_{X_f}+2\pi m_f.
\ene
\end{thm}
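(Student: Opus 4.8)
The proof amounts to assembling the lemmas of this section and verifying the four axioms of Definition 6 in turn; the plan is to do exactly that. First I would check the data is well posed: by the Fact, $(\Gamma_c(L),(,))$ is a pre-Hilbert space, so its $L^2$ completion $\mH$ is a Hilbert space in which $\Gamma_c(L)$ sits densely. Next one checks that each $Q_f$ maps $\Gamma_c(L)$ into itself: $m_f$ cannot enlarge supports, and by the locality axiom (Definition 10(i)) neither can $\nabla_{X_f}$, since if $s$ vanishes on an open $U$ then $(\nabla_{X_f}s)|_U=\nabla_{X_f|_U}(s|_U)=0$. Hence $\supp(Q_fs)\subseteq\supp(s)$, so $\Gamma_c(L)$ is a common dense domain preserved by every $Q_f$, and the expressions $Q_f+Q_g$ and $[Q_f,Q_g]$ make sense on it.

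Now the axioms themselves. Axiom (iii), $Q_1=2\pi I$, is the Remark following Definition 11: a constant function has vanishing Hamiltonian vector field, so $Q_1=2\pi m_1=2\pi\,\id$. Axiom (ii), linearity, holds because $f\mapsto X_f$ is $\bR$-linear (immediate from $df=\omega(X_f,-)$, linearity of $d$, and nondegeneracy of $\omega$), $\xi\mapsto\nabla_\xi$ is $\bC$-linear, and $f\mapsto m_f$ is linear, so $f\mapsto Q_f=i\nabla_{X_f}+2\pi m_f$ is linear. Axiom (iv), $Q_{\{f,g\}}=-i[Q_f,Q_g]$, is exactly Lemma 2, whose hypothesis — that the curvature of $\nabla$ equals $2\pi i\omega$, i.e. equation (1) — is precisely the defining condition of a Kostant-Souriau prequantum line bundle in Definition 14. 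Finally, for axiom (i), Lemma 1 shows each $Q_f$ is symmetric on $\Gamma_c(L)$, and combined with (ii) and (iv) this also makes $Q_f+Q_g=Q_{f+g}$ and $-i[Q_f,Q_g]=Q_{\{f,g\}}$ symmetric, so symmetry of all the operators produced by the composability operations reduces to Lemma 1 on the common invariant domain $\Gamma_c(L)$.

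The one point that is genuinely more than bookkeeping — and where I expect to do real work, or else to appeal to outside machinery — is upgrading \emph{symmetric on $\Gamma_c(L)$} to \emph{essentially self-adjoint}, which is what lets $Q$ be regarded as landing in $S\mH$ rather than merely in $\Hom(\Gamma_c(L),\Gamma_c(L))$. Essential self-adjointness of an unbounded symmetric operator is not automatic; the standard route is to exhibit a dense set of analytic vectors (Nelson's theorem) or, equivalently in this setting, to use completeness of the Hamiltonian flow $\Phi_t$ of $X_f$ so that $i\nabla_{X_f}+2\pi m_f$ generates a strongly continuous one-parameter unitary group on $\mH$. Under such a completeness hypothesis on the relevant vector fields the argument closes; in general this is the analytic subtlety the statement quietly absorbs, and I would either impose completeness or cite the functional-analytic results in Folland or Bates--Weinstein.
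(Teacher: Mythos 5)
Your proposal is correct and follows essentially the same route as the paper, which simply assembles the Fact on the pre-Hilbert structure of $\Gamma_c(L)$, the Remark that $Q_1=2\pi\,\id$, Lemma 1 for symmetry, and Lemma 2 for the bracket relation. You are in fact more careful than the paper on two points it passes over in silence: that each $Q_f$ preserves $\Gamma_c(L)$, and that symmetry on a dense domain does not by itself give essential self-adjointness --- the paper quietly sidesteps the latter by taking the codomain to be $\Hom(\Gamma_c(L),\Gamma_c(L))$ rather than $S\mH$.
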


The savvy reader should now notice that we assumed a lot in order to get to this point. For the symplectic manifold $(M,\omega)$ and line bundle $L\ra M$ we assumed
\begin{itemize}
    \item[(i)] There exists a Hermitian structure on $L$.
    \item[(ii)] There exists a comptaible covariant derivative.
    \item[(iii)] The Kostant-Souriau prequantization condition holds.
\end{itemize}

Turns out, these are all conditions on the cohomology class of $[\omega]$. We will dedicate the rest of the report to showing this fact.

\begin{rem}
The Kostant-Souriau prequantization gives us other important facts for free. First, the prequantization map $Q$ is actually injective. Second, this prequantization carries with it a projective unitary representation of the symplectomorphisms on $(M,\omega)$. To learn more, consult \cite{sean} or \cite{lerman}.
\end{rem}

\newpage

\section{Line Bundles and Covariant Derivatives}
\subsection{$\bC^*$-Principal Bundles And Connection 1-Forms}

The first topic we will be discussing on our way to proving the main theorem will be $\bC^*$-principal bundles and connection 1-forms. We will use this material to obtain a convenient way of describing covariant derivatives on line bundles which we will use heavily in the final proof. For a more complete introduction to the topics discussed in this subsection in a broader context, consult \cite{koba} or \cite{greub}.

$\bC^*$-principal bundles are a special kind of fibre bundle with a compatible action by $\bC^*$ on the total space. Before we define $\bC^*$-principal bundles in full generality, let's see the trivial example.

\begin{egs}
Let $M$ be a manifold and define $P=M\times\bC^*$. With the natural projection $pr_1:P\ra M$, we see that $P\xrightarrow{pr_1} M$ is a trivial fibre bundle with typical fibre $\bC^*$. This bundle has a smooth action by $\bC^*$: Let $x\in M$, $\mu,\lambda\in\bC^*$. Define
\bge
(x,\mu)\cdot\lambda:=(x,\mu\lambda).
\ene
This is trivially a smooth group action and is called the trivial action.

Note that the trivial action permutes elements of the fibre over $x$. 
\end{egs}

Taking the above example as a local model, we define a $\bC^*$-principal bundle in full generality.

\begin{defs}
A $\bC^*$-principal bundle is a fibre bundle $P\xrightarrow{\sigma} M$ with typical fibre $\bC^*$ together with a smooth free action
\bge
P\times \bC^*\ra P;\quad (p,\lambda)\mapsto p\cdot \lambda
\ene
such that
\begin{itemize}
    \item[(i)] If $p\in P$, then $q\in\sigma^{-1}(\sigma(p))$ $\iff$ there exists $\lambda\in\bC^*$ such that $q=p\cdot \lambda$.
    \item[(ii)] For all $x\in M$, there exists open neighbourhood $U\subset M$ and a diffeomorphism
    \bge
    \psi:\pi^{-1}(U)\ra U\times\bC^*
    \ene
    such that the diagram commutes
    \bge
    \begin{tikzcd}
    \sigma^{-1}(U)\arrow[rr,"\psi"]\arrow[dr,"\sigma"] & & U\times\bC^*\arrow[dl,"pr_1"]\\
    & U &
    \end{tikzcd}
    \ene
    And such that for all $p\in \pi^{-1}(U)$ and $\lambda\in\bC^*$
    \bge
    \psi(p\cdot\lambda)=\psi(p)\cdot\lambda
    \ene
    where we give $U\times\bC^*$ the trivial action. Call such a $(U,\psi)$ a local trivialization of $P\ra M$.
\end{itemize}
\end{defs}

\begin{rem}
Let $\lambda\in\bC^*$. For ease of notation, we will identify $\lambda$ with the smooth function
\bge
P\ra P;\quad p\mapsto p\cdot \lambda.
\ene
\end{rem}

\begin{egs}
As we've seen a trivial bundle $M\times\bC^*\ra M$ is a $\bC^*$-principal bundle. 
\end{egs}

\begin{egs}
For a less trivial example, let $L\xrightarrow{\pi}M$ be a line bundle. For all $x\in M$, define
\bge
L^+_x:=L_x\setminus\{0_x\}
\ene
where $0_x\in L_x$ denotes the zero element over $x$. We can then define the frame bundle of $L\ra M$ to be 
\bge
L^+:=\bigsqcup_{x\in M}L_x^+
\ene
and give it a projection $\widetilde{\pi}:=\pi|_{L^+}$.

Note that $L^+\subset L$ is a submanifold. If we let $(U,\psi)$ be a local trivialization of $L\ra M$, then since $\psi|_{L_x}$ is a linear isomorphism for all $x\in U$, we obtain that
\bge
\psi|_{\widetilde{\pi}^{-1}(U)}:\widetilde{\pi}^{-1}(U)\ra U\times\bC^*
\ene
is a diffeomorphism which commutes with the projections $\widetilde{\pi}$ and $pr_1$.

Observe that $L^+$ carries a natural action by $\bC^*$ which is smooth and free. Hence $L^+\xrightarrow{\pi} M$ is a $\bC^*$-principal bundle.
\end{egs}

Turns out, the above example classifies all principal $\bC^*$-principal bundles, but we will not need that fact in this report.

For the rest of this subsection, fix a principal $\bC^*$-bundle $P\xrightarrow{\sigma}M$ over $M$. We will now see how the Lie algebra of $\bC^*$, $\bC$, interacts with the fibre bundle structure.

\begin{defs}
 For every $C\in\bC$, define the fundamental vector field on $P$ associated to $C$, $X_C\in\mfX(P)$, by
\bge
(X_C)_p:=\frac{d}{dt}\bigg|_{t=0} p\cdot e^{2\pi i Ct}.
\ene
\end{defs}

Here's a sequence of facts that we will be using, but will not be proving as they would constitute too large of a tangent from the core material.
\begin{prop}
\begin{itemize}
    \item[(i)] The map
    \bge
    \bC\ra \mfX(P);\quad C\mapsto X_C
    \ene
    is a Lie algebra homomorphism.
    \item[(ii)] For all $C\in\bC$, $X_C=CX_1$. 
    \item[(iii)] $X_1$ never vanishes and hence defines a subbundle $VP$ of $TP$, called the vertical bundle.
    \item[(iv)] For all $p\in P$
    \bge
    d\sigma_p=V_pP.
    \ene
\end{itemize}
\end{prop}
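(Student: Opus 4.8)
The plan is to reduce all four claims to a direct computation in a local trivialization, since each assertion is local: the bracket $[X_C,X_{C'}]$ is computed by a local operator, being a subbundle is a local condition, and $\ker d\sigma_p = V_pP$ is pointwise. So I would fix a local trivialization $(U,\psi)$ carrying the $\bC^*$-action to the trivial action $(x,\mu)\cdot\lambda=(x,\mu\lambda)$ on $U\times\bC^*$, and identify $T_\mu\bC^* = \bC$ (this complex structure on the fibre directions is intrinsic to $P$, since the transition functions act on each fibre by multiplication by a fixed scalar, which is $\bC$-linear). Using the equivariance $\psi(p\cdot e^{2\pi iCt})=(x,\mu e^{2\pi iCt})$ and differentiating at $t=0$ gives the model formula
\bge
d\psi_p\big((X_C)_p\big) = (0,\,2\pi i C\mu)\in T_xM\oplus\bC,\qquad \psi(p)=(x,\mu),
\ene
and the proposition falls out of reading this formula carefully.

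From the formula, part (ii) is immediate since $2\pi iC\mu = C\cdot(2\pi i\mu)$, so $X_C = C\,X_1$ with the scalar acting through the complex structure on vertical vectors just described. Part (iii) follows because $\mu\neq 0$ forces $(X_1)_p\neq 0$, so $X_1$ never vanishes; moreover $(X_i)_p$ corresponds to $(0,-2\pi\mu)$, which is $\bR$-independent of $(X_1)_p$ because a nonzero $v\in\bC$ and $iv$ are independent over $\bR$, so $X_1,X_i$ give a smooth local frame for the rank-two distribution $VP$ with fibre $V_pP:=\{(X_C)_p:C\in\bC\}$, exhibiting it as a subbundle of $TP$. For part (iv) I would observe that in the chart $\sigma$ becomes $pr_1$, so $\ker d\sigma_p$ corresponds to $\{0\}\oplus\bC$, which is exactly $\{(0,2\pi iC\mu):C\in\bC\}$ since $C\mapsto 2\pi iC\mu$ is a bijection of $\bC$ when $\mu\neq0$; coordinate-freely, this is the easy inclusion $V_pP\subseteq\ker d\sigma_p$ (because $\sigma$ is constant on orbits, by the first principal-bundle axiom) upgraded by a dimension count.

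The one step needing real care — and the main obstacle — is part (i). I would first verify that the flow of $X_C$ is the globally defined, complete map $\Phi^C_t(p)=p\cdot e^{2\pi iCt}$, by checking $\tfrac{d}{dt}\big(p\cdot e^{2\pi iCt}\big) = \tfrac{d}{ds}\big|_{s=0}\big((p\cdot e^{2\pi iCt})\cdot e^{2\pi iCs}\big) = (X_C)_{p\cdot e^{2\pi iCt}}$ directly from the definition of the fundamental vector field. Then, since $\bC^*$ is abelian, $\Phi^C_t\circ\Phi^{C'}_s=\Phi^{C'}_s\circ\Phi^C_t$ for all $s,t$, so $[X_C,X_{C'}]=0$; combined with the $\bR$-linearity of $C\mapsto X_C$ visible in the model formula and the fact that $[C,C']=0$ in the abelian Lie algebra $\bC$, this is exactly the statement that $C\mapsto X_C$ is a Lie-algebra homomorphism. (Alternatively one reads $[X_1,X_i]=0$ off the chart and extends bilinearly via (ii), but the commuting-flows argument is cleaner and coordinate-free.) Beyond this, the only subtlety is cosmetic: one must make sure the ``$C\cdot$'' appearing in $X_C=CX_1$ refers to the genuine complex structure on vertical tangent vectors rather than anything chart-dependent.
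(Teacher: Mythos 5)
The paper does not actually prove this proposition --- it simply cites Greub--Halperin--Vanstone --- so there is no in-paper argument to compare against; what you have written is a self-contained proof where the paper has none, and it is correct. Your reduction of (ii)--(iv) to the model formula $d\psi_p((X_C)_p)=(0,2\pi iC\mu)$ in a trivialization is the standard route, and you are right to flag the one genuine subtlety: the scalar multiplication in $X_C=CX_1$ only makes sense because the vertical tangent spaces carry an intrinsic complex structure (equivalently, $V_pP\cong\bC$ via $C\mapsto(X_C)_p$), which survives the change of trivialization because transitions act fibrewise by $\bC$-linear multiplication. Your treatment of (iii) correctly reads the statement as defining $VP$ to be the full (real rank two) span of the fundamental vector fields rather than the line spanned by $X_1$ alone --- this is forced by consistency with (iv), whose statement in the paper should of course read $\ker d\sigma_p=V_pP$, and your frame $\{X_1,X_i\}$ together with the pointwise independence check establishes smoothness of the subbundle. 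For (i), identifying the flow of $X_C$ as $p\mapsto p\cdot e^{2\pi iCt}$ and invoking commutativity of $\bC^*$ to get $[X_C,X_{C'}]=0$ is clean and coordinate-free; since $\bC$ is abelian this, together with the ($\bR$-)linearity of $C\mapsto X_C$ visible in the model formula, is exactly what a Lie algebra homomorphism requires. (For a nonabelian structure group one would have to worry about the homomorphism versus anti-homomorphism convention for right actions, but that issue is vacuous here.) No gaps.
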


\begin{proof}
See \cite{greub}.
\end{proof}

So a principal bundle always has a special subbundle of $TP$ specified by the action of $\bC^*$. This attaches a copy of $\bC$ to every point $p\in P$. Is there a way to attach a copy of $T_{\sigma(p)}M$ to $p$ as well in a transverse way? The answer is yes, and a formula on how to do this is provided by a connection 1-form.

\begin{defs}
 A connection 1-form is a 1-form $\varphi\in\Omega^1(P)\otimes\bC$ such that
\begin{itemize}
    \item[(i)] $X_C\intprod \varphi=C$ for all $C\in\bC$.
    \item[(ii)] For all $\lambda\in\bC^*$ we have $\lambda^*\varphi=\varphi$.
\end{itemize}
\end{defs}

\begin{egs}
Let $P=M\times\bC^*$. Give $\bC^*$ coordinate $z$. Then for any $A\in\Omega^1(M)\otimes\bC$, define
\bge
\varphi=\bigg(A,\frac{1}{2\pi i}\frac{dz}{z}\bigg).
\ene

First, let's compute $X_1$ on $P$. For any $(x,\mu)\in M\times\bC^*$, we have
\bge
(X_1)_{(x,\mu)}=\frac{d}{dt}\bigg|_{t=0}(x,\mu)\cdot e^{2\pi it}=\frac{d}{dt}\bigg|_{t=0}(x,\mu e^{2\pi it})=(0_x, 2\pi i \mu),
\ene
where $0_x\in T_xM$ denotes the zero element above $x$. 

Now we have to show $\varphi(X_1)=1$. We compute,
\begin{align*}
\varphi_{(x,\mu)}(X_1)_{(x,\mu)}&=\bigg(A_x,\frac{1}{2\pi i}\frac{dz}{z}\bigg|_\mu\bigg)(0_x, 2\pi i \mu)\\
&=B_x(0_x)+\frac{1}{2\pi i}\frac{dz}{z}\bigg|_\mu(2\pi i \mu)\\
&=\frac{1}{2\pi i}\frac{2\pi i \mu}{\mu}\\
&=1.
\end{align*}
Hence, $\varphi(X_C)=C\varphi(X_1)=C$.

Next, to show the equivariance, let $(v_x,\eta)\in T_xM\times \bC$ and $\lambda\in\bC^*$, then
\begin{align*}
(\lambda^*\varphi)_{(x,\mu)}(v_x,\eta)&=\varphi_{(x,\lambda\mu)}(v_x,\lambda\eta)\\
&=B_x(v_x)+\frac{1}{2\pi i}\frac{dz}{z}\bigg|_{\lambda\mu}(\lambda\eta)\\
&=B_x(v_x)+\frac{1}{2\pi i}\frac{\eta}{\mu}\\
&=\varphi_{(x,\mu)}(v_x,\eta).
\end{align*}
Thus, $\varphi$ is a connection 1-form.
\end{egs}

Given a connection 1-form, as was hinted above the defintion, one can find a way of splitting the tangent bundle in a way which is transverse to the vertical bundle $VP$. This splitting due to the connection 1-form is called the Horizontal bundle. More formally.

\begin{defs}
Given a $\bC^*$-principal bundle $P\ra M$,  a connection 1-form $\varphi$, and $p\in P$, we say $v\in T_pP$ is horizontal if $\varphi(v)=0$. Define the horizontal bundle $HP$ by
\bge
HP:=\{v\in TP \ | \ v\text{ is horizontal}\}.
\ene
\end{defs}

\begin{prop}
$HP$ is a smooth subbundle of $TP$. Furthermore, there is a splitting
\bge
TP=VP\oplus HP.
\ene
\end{prop}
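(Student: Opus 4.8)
The plan is to establish two things: that $HP := \ker\varphi$ is a smooth rank-$1$ complex subbundle of $TP$, and that at each point $T_pP = V_pP \oplus H_pP$. I would do the linear-algebra fibrewise statement first and then upgrade to smoothness using a local trivialization.

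First I would fix $p \in P$ and show $T_pP = V_pP \oplus H_pP$ as vector spaces. By Proposition (iii)–(iv), $V_pP$ is spanned by $(X_1)_p \neq 0$, so $\dim_\bC V_pP = 1$, and $\dim_\bC T_pP = \dim_\bC M + 1$. For the direct sum it suffices to check $V_pP \cap H_pP = 0$ and that the dimensions add up. If $v \in V_pP \cap H_pP$, write $v = C (X_1)_p = (X_C)_p$ for some $C \in \bC$ using Proposition (ii); then $0 = \varphi(v) = X_C \intprod \varphi = C$ by axiom (i) of a connection 1-form, so $v = 0$. Since $\varphi_p : T_pP \to \bC$ restricts to the identity on $V_pP$ (again by axiom (i)), it is surjective, hence $\dim_\bC H_pP = \dim_\bC \ker \varphi_p = \dim_\bC T_pP - 1 = \dim_\bC M$. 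Therefore $\dim V_pP + \dim H_pP = \dim T_pP$ and the sum is direct; this also shows the projection $T_pP \to V_pP$, $v \mapsto \varphi_p(v)\,(X_1)_p$, is well defined, with complementary projection $v \mapsto v - \varphi_p(v)(X_1)_p$ landing in $H_pP$.

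Next I would prove $HP$ is a smooth subbundle. Since $\varphi \in \Omega^1(P)\otimes\bC$ is a smooth $\bC$-valued $1$-form whose value $\varphi_p : T_pP \to \bC$ has constant rank $1$ everywhere (it is surjective at every point, as just shown), its kernel bundle is a smooth subbundle by the constant-rank theorem for bundle maps: locally one can complete $\varphi$ to a smooth trivialization of $T^*P \otimes \bC$ and read off $HP = \ker\varphi$ as the common kernel of a smooth submersion $TP \to \bC$. Concretely, pick a local trivialization $(U,\psi)$ of $P$ and work in coordinates on $\psi^{-1}(U) \cong U \times \bC^*$ with a chart $(x^1,\dots,x^m)$ on $U$ and coordinate $z$ on $\bC^*$; then $\varphi$ has the form $\varphi = \sum_j a_j\, dx^j + b\, \tfrac{1}{2\pi i}\tfrac{dz}{z}$ with smooth coefficients and, by axiom (i) applied to $(X_1)$, $b \equiv 1$, so the $m$ vector fields $e_j := \partial_{x^j} - a_j \cdot 2\pi i z\,\partial_z$ are smooth, linearly independent, and span $\ker\varphi$ pointwise. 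These local frames exhibit $HP$ as a smooth rank-$m$ subbundle, and combined with the fibrewise direct-sum decomposition above we get the global splitting $TP = VP \oplus HP$ of smooth vector bundles (the projections are smooth because $\varphi$ and $X_1$ are smooth).

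The main obstacle is purely bookkeeping: making the constant-rank/smooth-frame argument precise without drowning in trivialization formulas. The equivariance axiom (ii) of $\varphi$ is not needed for this proposition (it becomes relevant only when one wants $HP$ to be $\bC^*$-invariant), so I would not invoke it here. The one subtlety worth a sentence is confirming that $\varphi_p$ is genuinely $\bC$-valued and surjective at *every* $p$ — which is immediate from axiom (i), since $\varphi_p((X_C)_p) = C$ realizes every $C \in \bC$ — so the rank is constant and the kernel-bundle construction applies globally.
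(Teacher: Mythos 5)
Your proposal is correct and follows essentially the same route as the paper: the fibrewise splitting comes from the observation that $\varphi_p$ restricts to an isomorphism $V_pP\to\bC$ (so $\ker\varphi_p$ is a complement of the right dimension), and smoothness is reduced to a local trivialization. The only difference is that where the paper delegates the smoothness of $HP$ on the trivial bundle to a reference, you supply the explicit local frame $e_j=\partial_{x^j}-X_{a_j}$, which is a welcome completion of that step (just be careful that $T_pP$ is only a real vector space, so the dimension count should be phrased as real codimension $2$ rather than complex codimension $1$).
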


\begin{proof}
For each $p\in P$, $H_pP=\ker \varphi_p$. Now, since the map
\bge
\bC\ra \Gamma(VP);\quad C\mapsto X_C
\ene
is a Lie algebra isomorphism, this implies $\varphi_p|_{V_pP}:V_pP\ra \bC$ is the inverse and hence $\dim H_pP=\dim P-1$. Taking a trivializing neighbourhood in $M$ for $P$, it suffices to show that the Horizontal bundle is smooth on the trivial bundle. See \cite{greub} for a proof in this case.

It's a fact from linear algebra that each tangent space $T_pP=V_pP\oplus H_pP$. And hence $TP=VP\oplus HP$.
\end{proof}

\begin{cor}
The projection maps
\bge
Ver:TP\ra VP,\quad Hor:TP\ra HP
\ene
are smooth.
\end{cor}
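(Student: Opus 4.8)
The corollary asserts that the projection maps $\mathrm{Ver}:TP\to VP$ and $\mathrm{Hor}:TP\to HP$ associated to the splitting $TP=VP\oplus HP$ are smooth bundle maps.

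The plan is to reduce everything to the previous proposition, which already gives us that $VP$ and $HP$ are smooth subbundles of $TP$ with $TP = VP\oplus HP$ as a direct sum of vector bundles. The central observation is that once one has a Whitney sum decomposition $E = E_1\oplus E_2$ of smooth vector bundles over $M$ (here $E = TP$ over $P$), the two projections onto the summands are automatically smooth vector bundle morphisms. So the real content is just to unpack what "smooth subbundle" and "splitting" mean and invoke a local triviality argument.

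Here is how I would carry it out. First, fix $p\in P$ and work in a local trivialization. By Proposition (the $HP$ smoothness statement) together with Proposition 2(iii), both $VP$ and $HP$ are smooth subbundles, so we may choose a neighbourhood $W$ of $p$ in $P$ over which $TP|_W$ is trivial and over which there exist smooth local frames: $V_1$ spanning $VP|_W$ (for instance $V_1 = X_1$, which never vanishes by Proposition 2(iii)) and smooth sections $H_1,\dots,H_{2n}$ spanning $HP|_W$ (these exist because $HP$ is a smooth subbundle of rank $2n = \dim M$, shrinking $W$ if necessary). Since $TP = VP\oplus HP$, the collection $V_1, H_1,\dots,H_{2n}$ is a smooth local frame for $TP|_W$. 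Relative to this frame, $\mathrm{Ver}$ is the map sending $a^0 V_1 + \sum_j b^j H_j$ to $a^0 V_1$, and $\mathrm{Hor}$ sends it to $\sum_j b^j H_j$; in frame coordinates these are the linear projections $(a^0,b^1,\dots,b^{2n})\mapsto (a^0,0,\dots,0)$ and $(a^0,b^1,\dots,b^{2n})\mapsto(0,b^1,\dots,b^{2n})$, which are manifestly smooth. Since smoothness is local and $p$ was arbitrary, $\mathrm{Ver}$ and $\mathrm{Hor}$ are smooth on all of $TP$; that they are bundle maps (fibrewise linear, covering $\mathrm{id}_P$) is immediate from the construction.

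Alternatively, and even more cheaply, one can express $\mathrm{Ver}$ directly via the connection $1$-form: since $\varphi_p|_{V_pP}$ is a linear isomorphism onto $\bC$ with inverse $C\mapsto (X_C)_p$ (as noted in the proof of the preceding proposition), we have the global formula $\mathrm{Ver}(v) = X_{\varphi(v)}$ for $v\in TP$, i.e. $\mathrm{Ver} = X_{(-)}\circ\varphi$. This is a composition of the smooth $1$-form $\varphi$ with the smooth map $C\mapsto X_C$ (smooth by Proposition 2(i)-(ii), being $C\mapsto CX_1$), hence smooth; then $\mathrm{Hor} = \mathrm{id}_{TP} - \mathrm{Ver}$ is smooth as well. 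I would likely present this second argument as the main proof and mention the frame argument as an aside. There is no real obstacle here: the only mild point to be careful about is the existence of the smooth local frame for $HP$ (or equivalently the smoothness of $\varphi$ as a genuine $1$-form on $P$), but this is exactly the content already granted by the preceding proposition and definition, so the corollary is essentially bookkeeping.
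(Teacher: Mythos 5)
Your proof is correct, and it supplies exactly the justification the paper leaves implicit (the corollary is stated there without proof as an immediate consequence of the smooth splitting $TP=VP\oplus HP$). Both of your arguments work; the closed formula $\mathrm{Ver}(v)=\varphi(v)\,(X_1)_p$ together with $\mathrm{Hor}=\mathrm{id}_{TP}-\mathrm{Ver}$ is the cleanest route, since it exhibits the projections as compositions of maps already known to be smooth.
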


Note that by dimensionality arguments alone, $d\sigma_p:H_pP\ra T_{\sigma(p)}M$ is an isomorphism of vector spaces for all $p\in P$. This allows us to lift vector fields on $M$ to vector fields on $P$.

\begin{thm}
Let $\xi\in\mfX(U)$ be a local section of $M$. Then there exists a unique local vector field $\overline{\xi}\in \mfX(\pi^{-1}(U))$ such that 
\begin{itemize}
    \item[(i)] $Hor(\overline{\xi})=\overline{\xi}$ 
    \item[(ii)] $d\pi_p(\overline{\xi}_p)=\xi_{\pi(p)}$
    \item[(iii)] For all $\lambda\in\bC^*$ we have
    \bge
    \lambda_*\overline{\xi}=\overline{\xi}.
    \ene
\end{itemize}
\end{thm}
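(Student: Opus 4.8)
\emph{Proof proposal.} The plan is to build $\overline{\xi}$ fibrewise from the isomorphism $d\sigma_p|_{H_pP}\colon H_pP\ra T_{\sigma(p)}M$ recorded just above the statement (here $\sigma$ is the bundle projection written $\pi$ in the theorem), then promote the pointwise definition to a smooth vector field using the smooth projection $Hor$, and finally deduce the $\bC^*$-equivariance (iii) from the invariance $\lambda^*\varphi=\varphi$ together with uniqueness.

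First I would set, for each $p\in\sigma^{-1}(U)$,
\bge
\overline{\xi}_p:=\big(d\sigma_p|_{H_pP}\big)^{-1}\big(\xi_{\sigma(p)}\big).
\ene
By construction $\overline{\xi}_p\in H_pP$, which gives (i) since $Hor$ restricts to the identity on $HP$, and $d\sigma_p(\overline{\xi}_p)=\xi_{\sigma(p)}$, which is (ii). Uniqueness is then immediate from (i) and (ii) alone: if $\overline{\xi}'$ also satisfies them, then at every $p$ both $\overline{\xi}_p$ and $\overline{\xi}'_p$ lie in $H_pP$ and have the same image under the injective map $d\sigma_p|_{H_pP}$, hence coincide.

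For smoothness I would pick any smooth local vector field $\eta$ on $\sigma^{-1}(U)$ with $d\sigma_p(\eta_p)=\xi_{\sigma(p)}$ for all $p$ — for example, cover $U$ by trivialising opens $V$, push $\xi$ through each trivialisation $\psi\colon\sigma^{-1}(V)\ra V\times\bC^*$ via $\xi\mapsto(\xi,0)$, and glue with a partition of unity subordinate to $\{V\}$. Since $Ver(\eta_p)\in V_pP=\ker d\sigma_p$, we get $d\sigma_p\big(Hor(\eta_p)\big)=d\sigma_p(\eta_p)=\xi_{\sigma(p)}$ with $Hor(\eta_p)$ horizontal, so uniqueness forces $\overline{\xi}=Hor\circ\eta$; as $Hor\colon TP\ra HP$ is smooth (shown above) and $\eta$ is smooth, $\overline{\xi}$ is a smooth vector field on $\sigma^{-1}(U)$.

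Finally, for (iii), fix $\lambda\in\bC^*$, written also for the diffeomorphism $p\mapsto p\cdot\lambda$ as in the Remark. From $\lambda^*\varphi=\varphi$ it follows that $d\lambda_p$ carries $H_pP=\ker\varphi_p$ into $H_{p\cdot\lambda}P$, and from $\sigma\circ\lambda=\sigma$ that $d\sigma_{p\cdot\lambda}\circ d\lambda_p=d\sigma_p$. Thus $d\lambda_p(\overline{\xi}_p)$ is horizontal over $p\cdot\lambda$ and projects to $\xi_{\sigma(p)}=\xi_{\sigma(p\cdot\lambda)}$, so uniqueness applied at the point $p\cdot\lambda$ gives $d\lambda_p(\overline{\xi}_p)=\overline{\xi}_{p\cdot\lambda}$, i.e.\ $\lambda_*\overline{\xi}=\overline{\xi}$. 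I expect the only real friction to be the bookkeeping in the smoothness step — checking that the patched local lift $\eta$ is a genuine smooth vector field on all of $\sigma^{-1}(U)$ — and keeping the two uses of $\lambda$ straight in the last paragraph; everything else is a direct consequence of the splitting $TP=VP\oplus HP$ and the defining properties of $\varphi$.
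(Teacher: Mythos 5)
Your proof is correct and complete. The paper itself gives no argument here --- it simply defers to Kobayashi--Nomizu --- and what you have written is precisely the standard proof from that reference: pointwise inversion of $d\sigma_p|_{H_pP}$ gives existence and (i)--(ii), injectivity of that restriction gives uniqueness, smoothness follows by writing $\overline{\xi}=Hor\circ\eta$ for any smooth local lift $\eta$ and invoking smoothness of $Hor$, and (iii) follows from $\lambda^*\varphi=\varphi$ and $\sigma\circ\lambda=\sigma$ together with the pointwise uniqueness. The one place you could simplify is the smoothness step: since smoothness is a local property, you can check it directly on each trivializing open (where $(\xi,0)$ is a smooth lift) without patching the $\eta$'s together by a partition of unity; but your gluing argument also works, since a convex combination of lifts of $\xi$ is again a lift.
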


\begin{proof}
See \cite{koba}
\end{proof}

\begin{defs}
If $\xi\in\mfX(U)$ and $\overline{\xi}\in\mfX(\sigma^{-1}(U))$ is as in the above theorem, then we call $\overline{\xi}$ the horizontal lift of $\xi$.
\end{defs}

\begin{cor}
For each open subset $U\subset M$, the map
\bge
\mfX(U)\ra \mfX(\pi^{-1}(U));\quad \xi\mapsto \overline{\xi}
\ene
is linear.
\end{cor}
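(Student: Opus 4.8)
The plan is to reduce the statement to the defining properties of the horizontal lift established in the preceding theorem, using the fact that a vector field $\overline\eta$ on $\sigma^{-1}(U)$ is the horizontal lift of $\eta \in \mfX(U)$ precisely when it satisfies the three bullet points (horizontality, $\sigma$-relatedness, and $\bC^*$-invariance). Fix $\xi,\eta \in \mfX(U)$ and scalars $a,b\in\bR$ (or $\bC$, depending on the intended scalars — I would work over $\bR$ since $\mfX(U)$ here is the real vector field space). I claim $a\overline\xi + b\overline\eta$ is the horizontal lift of $a\xi + b\eta$; by the uniqueness clause of the theorem this gives $\overline{a\xi+b\eta} = a\overline\xi + b\overline\eta$, which is exactly the asserted linearity.

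First I would check horizontality: since $\mathrm{Hor}$ is a fibrewise-linear bundle map (it is the projection $TP \to HP$ along $VP$, smooth by the Corollary following Proposition on the splitting $TP = VP\oplus HP$), we have $\mathrm{Hor}(a\overline\xi + b\overline\eta) = a\,\mathrm{Hor}(\overline\xi) + b\,\mathrm{Hor}(\overline\eta) = a\overline\xi + b\overline\eta$, using property (i) for $\overline\xi,\overline\eta$. Second, $\sigma$-relatedness: $d\sigma_p$ is linear on each tangent space, so
\bge
d\sigma_p\big((a\overline\xi + b\overline\eta)_p\big) = a\, d\sigma_p(\overline\xi_p) + b\, d\sigma_p(\overline\eta_p) = a\,\xi_{\sigma(p)} + b\,\eta_{\sigma(p)} = (a\xi + b\eta)_{\sigma(p)},
\ene
using property (ii) for each lift. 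Third, $\bC^*$-invariance: for $\lambda\in\bC^*$ the pushforward $\lambda_*$ is linear, so $\lambda_*(a\overline\xi + b\overline\eta) = a\,\lambda_*\overline\xi + b\,\lambda_*\overline\eta = a\overline\xi + b\overline\eta$ by property (iii). Hence $a\overline\xi + b\overline\eta$ satisfies all three characterizing properties, and uniqueness finishes the argument.

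I do not expect any genuine obstacle here — the entire content is that each of the three structures appearing in the characterization ($\mathrm{Hor}$, $d\sigma_p$, $\lambda_*$) is linear, so the span of the characterizing conditions is preserved under linear combinations, and then uniqueness in the theorem transfers linearity from the target back to the lift operation. The only point requiring a moment's care is making sure $a\overline\xi + b\overline\eta$ is still a smooth vector field on $\sigma^{-1}(U)$ — but this is immediate since $\mfX(\sigma^{-1}(U))$ is a vector space. One could alternatively argue $\sigma$-relatedness more globally by noting $a\overline\xi + b\overline\eta$ is $\sigma$-related to $a\xi+b\eta$ because $\sigma$-relatedness is preserved under linear combinations of $\sigma$-related pairs, but the pointwise computation above is the cleanest route.
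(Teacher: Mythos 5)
Your proof is correct and is exactly the intended argument: the paper offers no explicit proof for this corollary, leaving it as an immediate consequence of the uniqueness clause of the preceding theorem, which is precisely the route you take (verify the three characterizing properties for $a\overline{\xi}+b\overline{\eta}$ using linearity of $\mathrm{Hor}$, $d\sigma_p$, and $\lambda_*$, then invoke uniqueness). Your choice of real scalars is also the right reading, since $\mfX(U)$ denotes real vector fields here.
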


\subsection{Covariant Derivatives}
The main result of this subsection is the correspondence between covariant derivatives and connection 1-forms. We will need $M$ to be paracompact for all these proofs to work. The layout of this subsection closely follows \cite{jean}. 

Throughout this section, let $L\xrightarrow{\pi}M$ be a line bundle over $M$ and $L^+\xrightarrow{\widetilde{\pi}}M$ be its frame bundle.

Let's start with an example of a covariant derivative.

\begin{egs}
Let $L=M\times\bC$ be the trivial bundle over $M$. If $U\subset M$ is an open subset, then for all local sections $s\in\Gamma(U,L)$, there exists unique $f_s\in C^\infty(U,\bC)$ such that
\bge
s(x)=(x,f_s(x)),\quad x\in M.
\ene
For every $\xi\in\mfX_\bC(U)$, define
\bge
\nabla_\xi s(x):=(x,\xi_xf)
\ene
Since tangent vectors are derivations, we immediately obtain that $\nabla$ is a covariant derivative.
\end{egs}

Before we can continue, we need some ideas from point-set topology.

\begin{defs}
Let $X$ be a topological space.
\begin{itemize}
    \item[(i)] An open cover $\{U_i\}_{i\in I}$ is said to be locally finite if for all $x\in M$, there exists a neighbourhood $U$ of $x$ such that $U\cap U_i\neq\emptyset$ for only finitely many $i$.
    \item[(ii)] $X$ is said to be paracompact if for every open cover $\{U_i\}_{i\in I}$ there exists a refinement $\{V_j\}_{j\in J}$ which is a locally finite open cover of $X$.
\end{itemize}
\end{defs}

\begin{prop}
If $M$ is paracompact, then any open cover admits a partition of unity.
\end{prop}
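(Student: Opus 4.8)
The plan is to build a partition of unity subordinate to the given cover $\{U_i\}_{i\in I}$ by the classical recipe: replace it by a well-behaved locally finite refinement, shrink that refinement, glue local bump functions on the pieces, and normalize. For the first step, note that since a manifold is Hausdorff and locally Euclidean every point of $M$ lies in a coordinate ball $B$ with $\overline{B}$ compact and $\overline{B}\subset U_i$ for some $i$; the collection of all such balls is an open cover refining $\{U_i\}$, and paracompactness yields a locally finite open refinement $\{V_j\}_{j\in J}$ of it. Each $V_j$ sits inside one such ball, so $\overline{V_j}$ is compact and $\overline{V_j}\subset U_{i(j)}$ for some $i(j)\in I$.

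Next I would invoke the shrinking lemma: a paracompact Hausdorff space is normal, and for a locally finite open cover $\{V_j\}$ of a normal space there is an open cover $\{W_j\}_{j\in J}$ indexed by the same set with $\overline{W_j}\subset V_j$ for every $j$. This is the one genuinely point-set input, and it is proved by a Zorn's-lemma argument on partial shrinkings.

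Then, for each $j$, I would construct $\psi_j\in C^\infty(M)$ with $\psi_j\geq 0$, $\psi_j>0$ on $\overline{W_j}$, and $\supp\psi_j\subset V_j$: since $\overline{W_j}$ is compact and contained in $V_j$, finitely many coordinate balls lying inside $V_j$ cover it, and on each such ball one pulls back the standard smooth bump function on $\bR^n$ (built from $t\mapsto e^{-1/t}$ for $t>0$), extends by zero, and sums these finitely many functions. Because $\{V_j\}$ is locally finite and $\supp\psi_j\subset V_j$, the supports $\{\supp\psi_j\}$ form a locally finite family, so $\psi:=\sum_j\psi_j$ is locally a finite sum and hence smooth, and $\psi>0$ on all of $M$ since the $W_j$ cover $M$. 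Finally $\rho_j:=\psi_j/\psi$ are smooth, satisfy $0\leq\rho_j\leq 1$ and $\sum_j\rho_j=1$, have locally finite supports, and obey $\supp\rho_j\subset V_j\subset U_{i(j)}$, so $\{\rho_j\}_{j\in J}$ is a partition of unity subordinate to $\{U_i\}$.

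The main obstacle is the shrinking lemma, and relatedly the extraction of a precompact locally finite refinement: these rely on normality of paracompact Hausdorff spaces, on local compactness of manifolds, and on a maximality argument, and they are the only steps with real content — the bump-function gluing and the normalization are routine bookkeeping once the topology is arranged.
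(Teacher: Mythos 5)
Your argument is correct and is exactly the standard construction (locally finite precompact refinement of a cover by coordinate balls, the shrinking lemma for normal spaces, glued bump functions, normalization) that the paper itself does not spell out but simply cites from Lee. The only cosmetic difference is that your partition is indexed by $J$ rather than $I$; if a partition indexed by the original cover is wanted, set $\rho_i:=\sum_{j:\,i(j)=i}\rho_j$, which is still smooth and locally finite with $\supp \rho_i\subset U_i$.
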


\begin{proof}
See Lee \cite{lee}.
\end{proof}

We shall from now on assume that $M$ is paracompact. Note that since all smooth manifolds are paracompact, this will have no impact on the statement of the main theorem. 

Using the paracompactness of $M$, we may now show a foundational result.

\begin{prop}
If $Co(L)$ denotes the set of all covariant derivatives on $L\ra M$, then $Co(L)$ is nonempty.
\end{prop}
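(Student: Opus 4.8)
The plan is to prove that $Co(L)$ is nonempty by the standard patching argument: build covariant derivatives locally on a trivializing cover using the trivial-bundle example, then glue them with a partition of unity, which exists since $M$ is paracompact. The key fact that makes this work is that the defining conditions on a covariant derivative in Definition~13 are \emph{affine} rather than linear, so a convex combination of covariant derivatives is again a covariant derivative.

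First I would choose an open cover $\{U_i\}_{i\in I}$ of $M$ by trivializing neighbourhoods for $L\ra M$, so that each $L|_{U_i}\cong U_i\times\bC$. By Example~6 (the trivial bundle carries a covariant derivative), each $L|_{U_i}$ admits a covariant derivative $\nabla^i$. Next, invoke Proposition~5: since $M$ is paracompact, the cover $\{U_i\}$ admits a subordinate partition of unity $\{\rho_i\}_{i\in I}$ with $\supp\rho_i\subset U_i$ and $\sum_i\rho_i=1$ (locally finite sum). Define, for an open set $V\subset M$, a complex vector field $\xi\in\mfX_\bC(V)$, and a section $s\in\Gamma(V,L)$,
\bge
\nabla_\xi s:=\sum_{i\in I}\rho_i\,\nabla^i_{\xi|_{V\cap U_i}}(s|_{V\cap U_i}),
\ene
where each summand is extended by zero outside $\supp\rho_i$; local finiteness makes this a well-defined smooth section of $L$ over $V$.

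Then I would verify the three axioms of Definition~13. Restriction-compatibility (i) is immediate because each $\nabla^i$ satisfies it and the $\rho_i$ are globally fixed functions. For the $C^\infty$-linearity $\nabla_{f\xi}s=f\nabla_\xi s$: each $\nabla^i$ is $C^\infty(U_i,\bC)$-linear in the vector-field slot, so the property passes through the sum. For the Leibniz rule $\nabla_\xi(fs)=(\xi f)s+f\nabla_\xi s$: here is where the affineness is used — applying the Leibniz rule for each $\nabla^i$ gives
\bge
\sum_i\rho_i\big((\xi f)s+f\nabla^i_\xi s\big)=(\xi f)s\sum_i\rho_i+f\sum_i\rho_i\nabla^i_\xi s=(\xi f)s+f\nabla_\xi s,
\ene
using $\sum_i\rho_i=1$ to kill the spurious coefficient on the derivative term. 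Hence $\nabla\in Co(L)$, so $Co(L)\neq\emptyset$.

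The only genuine obstacle is bookkeeping rather than mathematics: one must be careful that each term $\rho_i\nabla^i_\xi s$, a priori defined only on $V\cap U_i$, extends smoothly by zero to all of $V$ (which it does, since $\rho_i$ vanishes on a neighbourhood of $\partial(V\cap U_i)$ inside $V$), and that the resulting sum is locally finite so that smoothness is a local check. Everything else follows formally from Example~6, Proposition~5, and the normalization $\sum_i\rho_i=1$.
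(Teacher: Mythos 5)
Your proposal follows exactly the paper's argument: local covariant derivatives on a trivializing cover via the trivial-bundle example, glued by a partition of unity (using paracompactness), with the Leibniz rule surviving because $\sum_i\rho_i=1$. In fact you supply more detail than the paper does — the verification of the axioms and the extension-by-zero bookkeeping are left implicit there — so the proposal is correct and complete.
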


\begin{proof}
Let $\{U_i\}_{i\in I}$ be a trivializing cover of $L\ra M$. Since $\pi^{-1}(U_i)\ra U_i$ is isomorphic to the trivial bundle for all $i$, we have there exists a covariant derivative $\nabla_i$ on $\pi^{-1}(U)\ra U_i$ for all $i$.

Take a partition of unity $\{h_i\}_{i\in I}$ subordinate to the cover. Define
\bge
\nabla:=\sum_{i\in I} h_i\nabla_i.
\ene
That is, if $U\subset M$ is open, $\xi\in\mfX(U)$ is a local vector field, and $s\in\Gamma(U,L)$ is a local section of $L$, then define
\bge
\nabla_\xi s:=\sum_{i\in I} h_i(\nabla_i)_{\xi_i}s_i
\ene
where $\xi_i=\xi|_{U_i}$ and $s_i=s|_{U_i}$. 
\end{proof}

Now, if $\nabla\in Co(L)$ and $\alpha\in\Omega^1(M)\otimes\bC$, then we can define a new covariant derivative by
\bge
(\nabla+\alpha)_\xi s=\nabla_\xi s+\alpha(\xi) s.
\ene
Turns out, this proceedure generates all covariant derivatives.

\begin{cor}
$Co(L)$ is an affine space under the above action by the vector space $\Omega^1(M)\otimes\bC$.
\end{cor}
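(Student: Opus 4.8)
The plan is to verify the two assertions in Corollary~3 separately: first that the action $(\nabla,\alpha)\mapsto\nabla+\alpha$ is well-defined and genuinely free with the expected stabilizer behaviour, and second that it is transitive. Well-definedness amounts to checking that $(\nabla+\alpha)$ as defined really is a covariant derivative, i.e.\ satisfies the locality axiom (i) and the Leibniz axioms (ii) of Definition~12. Locality is immediate since both $\nabla$ and the pointwise multiplication by $\alpha(\xi)$ restrict compatibly to open subsets. For the Leibniz rule, I would compute directly: $(\nabla+\alpha)_{f\xi}s=\nabla_{f\xi}s+\alpha(f\xi)s=f\nabla_\xi s+f\alpha(\xi)s=f(\nabla+\alpha)_\xi s$ using $\bC$-bilinearity of $\alpha$, and $(\nabla+\alpha)_\xi(fs)=(\xi f)s+f\nabla_\xi s+\alpha(\xi)fs=(\xi f)s+f(\nabla+\alpha)_\xi s$, so the extra term $\alpha(\xi)s$ behaves like a zeroth-order operator and does not disturb the derivation property. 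Additivity in both slots is clear. That the action is free is the observation that if $\nabla+\alpha=\nabla$ then $\alpha(\xi)s=0$ for every local $\xi$ and every local section $s$; choosing $s$ a nowhere-vanishing local section (which exists on any trivializing neighbourhood) forces $\alpha(\xi)=0$ pointwise on that neighbourhood for all $\xi$, hence $\alpha=0$.

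The substantive direction is transitivity: given $\nabla,\nabla'\in Co(L)$, I must produce $\alpha\in\Omega^1(M)\otimes\bC$ with $\nabla'=\nabla+\alpha$. The idea is to show that the difference operator $D:=\nabla'-\nabla$, defined by $D_\xi s:=\nabla'_\xi s-\nabla_\xi s$, is $C^\infty(M,\bC)$-linear in $s$, so that it is tensorial and therefore given by an $\End(L)$-valued $1$-form; since $L$ is a line bundle, $\End(L)$ is canonically trivial and such a form is just an ordinary complex-valued $1$-form $\alpha$. Concretely, the two Leibniz identities give
\begin{align*}
D_\xi(fs)&=\nabla'_\xi(fs)-\nabla_\xi(fs)\\
&=(\xi f)s+f\nabla'_\xi s-(\xi f)s-f\nabla_\xi s\\
&=f\,D_\xi s,
\end{align*}
and $D_{f\xi}s=f\nabla'_\xi s-f\nabla_\xi s=f\,D_\xi s$, and $D$ is additive in both arguments. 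So $D$ is pointwise: $(D_\xi s)(x)$ depends only on $\xi_x$ and $s(x)$, by the standard tensoriality argument (if $s(x)=0$ write $s=\sum f_j e_j$ near $x$ with $f_j(x)=0$ in a local frame, or more simply use a bump function to see $D_\xi s$ vanishes wherever $s$ does). This yields, for each $x$, a linear map $T_x:T_x^\bC M\times L_x\to L_x$ linear in each factor, i.e.\ an element of $T_x^*M\otimes\bC\otimes\End(L_x)$; choosing a local nonvanishing section $e$ and writing $D_\xi e=\alpha(\xi)e$ defines $\alpha$ locally, and the computations above show $\alpha$ is independent of the choice of $e$ (rescaling $e$ by a nonvanishing function $g$ leaves $\alpha(\xi)=\bigl(D_\xi(ge)\bigr)/(ge)=g D_\xi e/(ge)$ unchanged), so the locally defined $\alpha$ patch to a global $\alpha\in\Omega^1(M)\otimes\bC$. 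Smoothness of $\alpha$ follows from smoothness of $D_\xi e$ and nonvanishing of $e$. Then by construction $\nabla'=\nabla+\alpha$.

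Combining: $Co(L)$ is nonempty by the previous proposition, the additive action of $\Omega^1(M)\otimes\bC$ is free and transitive, hence $Co(L)$ is a torsor — an affine space — over $\Omega^1(M)\otimes\bC$, as claimed. The only mildly delicate point is the tensoriality/patching argument, but this is the classical ``difference of two connections is a tensor'' fact, and in the line-bundle case it is especially clean because $\End(L)$ is trivial and nonvanishing local sections are readily available on any trivializing cover; no paracompactness is even needed for this corollary, only for the existence statement it builds on.
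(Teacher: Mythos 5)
Your proposal is correct and follows essentially the same route as the paper: the key step in both is to define the difference $1$-form locally as $(\nabla'_\xi s-\nabla_\xi s)/s$ for a nonvanishing local section $s$, use the Leibniz rule to check independence of the choice of $s$, and patch over a trivializing cover. You additionally verify that $\nabla+\alpha$ is itself a covariant derivative and that the action is free, details the paper elides, but these are routine checks rather than a different argument.
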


\begin{rem}
For notational convenience, if $U\subset M$ is open, $s\in\Gamma(U,L^+)$ and $t\in\Gamma(U,L)$, then define the smooth function
\bge
\frac{t}{s}:U\ra\bC
\ene
uniquely by the property
\bge
t=\frac{t}{s}s.
\ene
Note that if $t$ also doesn't vanish (i.e. $t\in\Gamma(U,L^+)$), then $t/s$ doesn't vanish as well.
\end{rem}

\begin{proof}(Of Corollary)

Let $\nabla,\nabla'\in Co(L)$. For any open subset $U\subset M$,  $s\in\Gamma(U,L^+)$, and $\xi\in\mfX(U)$, define
\bge
A_U(\xi):=\frac{\nabla_\xi s-\nabla'\xi s}{s}.
\ene
$A_U$ is linear in $\xi$ and smooth. Hence $A_U\in\Omega^1(U)\otimes\bC$. 

Choosing a different $t\in\Gamma(U,L^+)$, we obtain $g:U\ra \bC^*$ such that $t=gs$. Then we compute using the fact that both $\nabla$ and $\nabla'$ are derivations in the second entry:
\bge
\frac{\nabla_\xi t-\nabla'\xi t}{t}=\frac{\nabla_\xi gs-\nabla'\xi gs}{gs}=\frac{\nabla_\xi s-\nabla'\xi s}{s}.
\ene
Thus, $A_U$ doesn't depend on choice of non-vanishing section. It's then clear that if $U\cap V\neq\emptyset$, then $A_U|_{U\cap V}=A_V|_{U\cap V}$. Hence the $A_U$ form a globally defined 1-form $A\in\Omega^1(M)\otimes\bC$. By construction, we have
\bge
\nabla-\nabla'=A.
\ene
$A$ is clearly the unique 1-form to have this property.
\end{proof}

We will show a correspondence between covariant derivatives and connection 1-forms. First, we need a definition and a Lemma.

\begin{defs}
Let $V\subset L^+$ be an open subset and $f\in C^\infty(V,\bC)$. Say $f$ is $\bC^*$-equivariant if for all $z\in V$ and $\lambda\in\bC^*$
\bge
f(\lambda z)=\lambda^{-1}f(z).
\ene
Denote the set of all such equivariant funtions on $V$ by $\bC^*(V)$.
\end{defs}

The following correspondence is from  \cite{Snia}.
\begin{lem}
Let $U\subset M$ be open.
\begin{itemize}
    \item[(i)] If $s\in\Gamma(U,L)$, then there exists a unique $\phi_s\in \bC^*(\widetilde{\pi}^{-1}(U))$ such that for all $z\in \widetilde{\pi}^{-1}(U)$
    \bge
    s(\pi(z))=\phi_s(z)z.
    \ene
    (Recall that $L^+\subset L$ so $\phi_s(z)z$ is defined as an element of $L$ for any value of $\phi_s$).
    \item[(ii)] If $f\in \bC^*(\widetilde{\pi}^{-1}(U))$, then there exists a unique $\rho_f\in \Gamma(U,L)$ such that for all $z\in \widetilde{\pi}^{-1}(U)$
    \bge
    \rho_f(\pi(z))=f(z)z.
    \ene
    \item[(iii)] The maps
    \begin{align*}
    \phi:\Gamma(U,L)\ra \bC^*(\widetilde{\pi}^{-1}(U));&\quad s\mapsto \phi_s\\
    \rho:\bC^*(\widetilde{\pi}^{-1}(U))\ra\Gamma(U,L);&\quad f\mapsto \rho_f
    \end{align*}
    are linear and inverse to one another.
\end{itemize}
\end{lem}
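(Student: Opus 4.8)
The plan is to construct the maps $\phi$ and $\rho$ explicitly, check they are well-defined and $\bC^*$-equivariant (resp.\ smooth sections), verify linearity, and finally show they are mutually inverse — essentially three small verifications, with the $\bC^*$-equivariance of $\phi_s$ being the only place any real thought is needed.

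For part (i), given $s\in\Gamma(U,L)$ and a point $z\in\widetilde{\pi}^{-1}(U)$, note that $z\in L^+_{\pi(z)}$ is a nonzero vector in the $1$-dimensional space $L_{\pi(z)}$, so it is a basis; hence there is a unique scalar $\phi_s(z)\in\bC$ with $s(\pi(z))=\phi_s(z)\,z$. Smoothness of $\phi_s$ follows by working in a local trivialization $\psi:\widetilde{\pi}^{-1}(V)\ra V\times\bC^*$ of $L^+$: there $z\mapsto(\pi(z),g(z))$ with $g$ smooth and nonvanishing, $s$ corresponds to a smooth $\bC$-valued function $f_s$, and $\phi_s(z)=f_s(\pi(z))/g(z)$, which is smooth. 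Equivariance is the key computation: for $\lambda\in\bC^*$ we have $s(\pi(\lambda z))=s(\pi(z))=\phi_s(z)z$, while by definition $s(\pi(\lambda z))=\phi_s(\lambda z)(\lambda z)=\lambda\,\phi_s(\lambda z)\,z$; comparing the scalar coefficients (again $z$ is a basis) gives $\phi_s(\lambda z)=\lambda^{-1}\phi_s(z)$, so $\phi_s\in\bC^*(\widetilde{\pi}^{-1}(U))$. Uniqueness is immediate from $z$ being a basis vector pointwise.

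For part (ii), given $f\in\bC^*(\widetilde{\pi}^{-1}(U))$, the formula $x\mapsto f(z)z$ for any $z\in\widetilde{\pi}^{-1}(x)$ is well-defined precisely because of equivariance: if $z'=\lambda z$ is another point in the fibre, then $f(z')z'=f(\lambda z)(\lambda z)=\lambda^{-1}f(z)\lambda z=f(z)z$. This defines a map $\rho_f:U\ra L$ with $\pi\circ\rho_f=\id_U$; smoothness again follows in a local trivialization, where picking the canonical section $z=\psi^{-1}(x,1)$ gives $\rho_f(x)=(x,f(\psi^{-1}(x,1)))$ in coordinates. Uniqueness of $\rho_f$ with the stated property is clear since it is forced pointwise.

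For part (iii), linearity of both maps is immediate from the defining equations (the scalar coefficient of $s_1+s_2$ with respect to the basis $z$ is $\phi_{s_1}(z)+\phi_{s_2}(z)$, etc.). That they are inverse: starting from $s$, the section $\rho_{\phi_s}$ satisfies $\rho_{\phi_s}(\pi(z))=\phi_s(z)z=s(\pi(z))$ for all $z$, so $\rho_{\phi_s}=s$; starting from $f$, the function $\phi_{\rho_f}$ is the unique scalar with $\rho_f(\pi(z))=\phi_{\rho_f}(z)z$, but $\rho_f(\pi(z))=f(z)z$, so by uniqueness $\phi_{\rho_f}=f$. I do not anticipate any genuine obstacle here; the only subtlety worth stating carefully is that "$z$ is a nonzero vector in a $1$-dimensional space, hence a basis," which is what makes every scalar coefficient both exist and be unique, and which drives the equivariance computation in part (i).
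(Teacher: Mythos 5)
Your proposal is correct and follows essentially the same route as the paper: pointwise uniqueness from $z$ being a basis of the one-dimensional fibre, smoothness of $\phi_s$ via a local trivialization (the paper's $\phi_s(z)=F(\pi(z))/\beta(z)$ is exactly your $f_s(\pi(z))/g(z)$), well-definedness of $\rho_f$ from equivariance, and inverseness read off from the defining equations. If anything you are slightly more complete, since you explicitly verify the $\bC^*$-equivariance of $\phi_s$, a step the paper dismisses as clear.
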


\begin{proof}
Fix $s\in\Gamma(U,L)$ and $f\in\bC^*(\widetilde{\pi}^{-1}(U))$.

It's clear that $\phi_s$ is well-defined. So we only show that $\rho_f$ is well-defined.

Suppose $\lambda\in \bC^*$ and $z\in\widetilde{\pi}^{-1}(U)$. Then
\bge
f(\lambda z)( \lambda z)=\lambda^{-1}\lambda f(z)z=f(z)z.
\ene
Hence, $\rho_f$ is well-defined.

We'll show $\phi_s$ is smooth. It's the exact same computation to show that $\rho_f$ is smooth.

Let $U\subset M$ be a trivializing neighbourhood and $\psi:\pi^{-1}(U)\ra U\times\bC$ a trivializing diffeomorphism. By definition, there exists $\beta:\pi^{-1}(U)\ra\bC$, which is a linear isomorphism over each fibre, such that
\bge
\psi(z)=(\pi(z),\beta(z)).
\ene
Note that for any $z\in\widetilde{\pi}^{-1}(U)\subset\pi^{-1}(U)$ we have $\beta(z)\neq 0$. 

Fix $z\in\widetilde{\pi}^{-1}(U)$. We have
\bge
\psi\circ s(\pi(z))=(\pi(z), F(\pi(z)))
\ene
for some $F:U\ra\bC$. On the otherhand, we have 
\bge
\psi\circ s(\pi(z))=\psi(\phi_s(z)z)=(\pi(z),\phi_s(z)\beta(z)).
\ene
Hence, we have
\bge
\phi_s(z)=\frac{f(\pi(z))}{\beta(z)}.
\ene
Both $F$ and $\beta$ are smooth and $\beta\neq 0$ on $\widetilde{\pi}^{-1}(U)$, hence $\phi_s$ is smooth.

It's clear that both $\phi$ and $\rho$ are linear. Furthermore, their defining equations are inverse to one another. Hence, $\phi$ and $\rho$ are inverse.
\end{proof}

Now, we are ready for the main result of this section. The proof of (i) is due to Śniatycki \cite{Snia} and the proof of (ii) is the argument given by  Brylinski \cite{jean}.

\begin{thm}
\begin{itemize}
    \item[(i)] If $\varphi\in\Omega^1(L^+)$ is a connection 1-form, then there exists a unique covariant derivative $\nabla$ such that
    \begin{equation}
    \nabla_\xi s=2\pi i(s^*\varphi(\xi))s.
    \end{equation}
    \item[(ii)] If $\nabla$ is a covariant derivative, then there exists a unique connection 1-form $\varphi\in\Omega^1(L^+)$ satisfying equation (1).
\end{itemize}
\end{thm}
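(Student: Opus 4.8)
The plan is to prove the two directions entirely locally and then glue: every line bundle has local frames (nowhere‑zero local sections of $L^+$), every $\bC^*$‑principal bundle has local sections, and all the objects in play — horizontal lifts, the equivariant functions $\bC^*(\widetilde\pi^{-1}(U))$, covariant derivatives — restrict compatibly to smaller opens. So it suffices to produce the required data over a trivializing neighbourhood, check the axioms there, and observe that everything is forced to agree on overlaps.

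For (i), I would define $\nabla$ from $\varphi$ via the correspondence of the preceding Lemma. Extend the horizontal lift of the previous subsection $\bC$‑linearly, so each $\xi\in\mfX_\bC(U)$ has a horizontal lift $\overline\xi$ on $\widetilde\pi^{-1}(U)$. Given $s\in\Gamma(U,L)$ with associated equivariant function $\phi_s$, the first step is to check that $\overline\xi(\phi_s)$ is again $\bC^*$‑equivariant; this is exactly where the axioms of the connection $1$‑form enter, through the $\bC^*$‑invariance $\lambda_*\overline\xi=\overline\xi$ of the horizontal lift together with the identity $\phi_s\circ\lambda=\lambda^{-1}\phi_s$. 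One may then define $\nabla_\xi s:=\rho_{\overline\xi(\phi_s)}$. The covariant‑derivative axioms are routine from here: $\bC$‑bilinearity follows from linearity of $\phi$, $\rho$, and of the lift; the restriction property from the locality of horizontal lifts and of $\rho$; $\nabla_{f\xi}s=f\nabla_\xi s$ from $\overline{f\xi}=(f\circ\widetilde\pi)\overline\xi$; and the Leibniz rule $\nabla_\xi(fs)=(\xi f)s+f\nabla_\xi s$ from $\phi_{fs}=(f\circ\widetilde\pi)\phi_s$, $\overline\xi(f\circ\widetilde\pi)=(\xi f)\circ\widetilde\pi$, and the ordinary Leibniz rule for the derivation $\overline\xi$. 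To read off formula (2) I would specialize to a local frame $s\in\Gamma(U,L^+)$: in a trivialization $\phi_s$ is the reciprocal of the fibre coordinate, and differentiating it along $\overline\xi$ yields precisely $2\pi i\,(s^*\varphi)(\xi)$, the factor $2\pi i$ being forced by the normalization $X_C\intprod\varphi=C$ with $X_C=\tfrac{d}{dt}\big|_0\,p\cdot e^{2\pi i Ct}$. Uniqueness is immediate, since the Leibniz rule and the values of $\nabla$ on an arbitrary frame determine it.

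For (ii), I would build $\varphi$ from $\nabla$ pointwise. Fix $z\in L^+$, put $x=\widetilde\pi(z)$, let $w\in T_zL^+$, and choose any local section $\sigma$ of $L^+$ with $\sigma(x)=z$ (possible since $L^+\to M$ is a $\bC^*$‑principal bundle). Because $d\widetilde\pi_z$ annihilates $V_zL^+$ and is an isomorphism off it, the vector $w-d\sigma_x\!\big(d\widetilde\pi_z(w)\big)$ is vertical, hence equals $(X_C)_z$ for a unique $C\in\bC$; set
\[
\varphi_z(w):=C+\frac{1}{2\pi i}\left(\frac{\nabla_{\,d\widetilde\pi_z(w)}\,\sigma}{\sigma}\right)(x),
\]
which is meaningful since $\nabla$ is tensorial in the vector‑field slot. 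The crux is independence of $\sigma$: writing a second choice as $\sigma'=g\sigma$ with $g(x)=1$, the change in $d\sigma_x$ is the fundamental vector field of $\tfrac{1}{2\pi i}$ times the directional derivative of $g$, while the change in $\tfrac{\nabla\sigma}{\sigma}$ is that same directional derivative (Leibniz rule), and the two contributions cancel against the recomputed $C$. Smoothness is local; $\varphi(X_C)=C$ is immediate (take $w=(X_C)_z$, so $d\widetilde\pi_z(w)=0$); $\bC$‑linearity in $w$ is clear; and $\bC^*$‑equivariance $\lambda^*\varphi=\varphi$ follows since $\lambda\circ\sigma$ is again a section through $z\cdot\lambda$, $\lambda_*$ carries $(X_C)_z$ to $(X_C)_{z\lambda}$ (the group is abelian), and rescaling $\sigma$ by the constant $\lambda$ leaves $\tfrac{\nabla\sigma}{\sigma}$ unchanged. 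Taking $\sigma$ a frame $s$ and $w=ds_x(\xi_x)$ kills the vertical term and gives equation (2). Uniqueness holds because any connection form obeying (2) is pinned down on $V_zL^+$ by its defining axiom and on the complement $ds_x(T_xM)$ by (2), and these span $T_zL^+$.

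I expect the main obstacle in both directions to be the same piece of bookkeeping: well‑definedness under a change of local frame in (i), or of local section in (ii), which is in disguise the gauge‑transformation law for connection one‑forms on the abelian bundle $L^+$. Carrying the normalization constant $2\pi i$ through so that the two sides of (2) match demands care but no new idea; once that transformation law is established, the remaining checks are purely formal.
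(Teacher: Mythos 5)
Your proposal is correct, and part (i) is essentially the paper's argument: define $\nabla_\xi s$ by letting the horizontal lift $\overline{\xi}$ act on the equivariant function $\phi_s$, check that the result is again equivariant (equivalently, well-defined) using $\lambda_*\overline{\xi}=\overline{\xi}$ and $\phi_s\circ\lambda=\lambda^{-1}\phi_s$, verify the Leibniz rule from $\phi_{fs}=(f\circ\pi)\phi_s$, and then extract equation (2); the only cosmetic difference is that you verify (2) in a trivialization where $\phi_s$ is the reciprocal of the fibre coordinate, while the paper does it intrinsically by splitting $ds_x(\xi_x)$ into horizontal and vertical parts and using $\phi_s\circ s\equiv 1$. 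Part (ii) is where you genuinely diverge. The paper first proves uniqueness (so that local existence suffices), then uses the affine structure of $Co(L)$ to write $\nabla=d+2\pi iB$ on a trivializing neighbourhood and exhibits the connection form explicitly as $\bigl(B,\tfrac{1}{2\pi i}\tfrac{dz}{z}\bigr)$, which it has already checked is a connection 1-form in an earlier example. You instead build $\varphi$ pointwise and coordinate-free: decompose $w\in T_zL^+$ against $d\sigma_x$ for an arbitrary local section $\sigma$ through $z$, and define $\varphi_z(w)$ as the vertical component plus $\tfrac{1}{2\pi i}\tfrac{\nabla\sigma}{\sigma}$. Your cancellation computation for $\sigma'=g\sigma$ with $g(x)=1$ is exactly right, and the verifications of $X_C\intprod\varphi=C$, equivariance, and (2) all go through. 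What each buys: the paper's route makes smoothness manifest and reuses prior examples, at the cost of an extra reduction step; yours is more intrinsic and makes the gauge-transformation law the visible heart of the proof, but the one step you wave at, ``smoothness is local,'' is precisely where you would end up redoing the paper's trivial-bundle computation, so the two proofs are closer in substance than in presentation.
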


\begin{proof}
\begin{itemize}
    \item[(i)] Let $\varphi\in\Omega^1(L^+)\otimes\bC$ be a connection 1-form.
    
    Let $U\subset M$ be an open subset, $\xi\in\mfX_\bC(U)$ a local vector field, and $s\in\Gamma(U,L)$ a local section of $L$. If $\overline{\xi}\in\mfX_\bC(\pi^{-1}(U))$ is the horizontal lift of $\xi$ and $\phi_s\in C^\infty(\widetilde{\pi}^{-1}(U),\bC)$ is the unique $\bC^*$-equivariant smooth function such that
    \bge
    s(\pi(z))=\phi_s(z)z,\quad z\in \widetilde{\pi}^{-1}(U)
    \ene
    then define
    \bge
    \nabla_\xi s(\pi(z)):=(\overline{\xi}_z\phi_s)z.
    \ene
    
    If $\nabla_\xi s$ is well-defined, then since it's produced from the composition of smooth operations, $\nabla_\xi s\in\Gamma(U,L)$.
    
    To show well-definedness, let $\lambda\in\bC^*$, then 
    \begin{align*}
    \nabla_\xi s(\pi(\lambda z))&=(\overline{\xi}_{\lambda z}\phi_s)\lambda z\\
    &=((\lambda_*\overline{\xi}_z)\phi_s)\lambda z\\
    &=(\overline{\xi}_z(\phi_s\circ\lambda))\lambda z\\
    &=(\overline{\xi}_z\phi_s)z\\
    &=\nabla_\xi s(\pi(z)).
    \end{align*}
    Hence, $\nabla_\xi s$ is well-defined and smooth.
    
    To see that $\nabla$ is a covariant derivative, first note that both the operations
    \begin{align*}
    \mfX_\bC(U)\ra \mfX_\bC(\widetilde{\pi}^{-1}(U));&\quad \xi\mapsto \overline{\xi}\\
    \Gamma(U,L)\ra C^\infty(\widetilde{\pi}^{-1}(U),\bC);&\quad s\mapsto \phi_s
    \end{align*}
    are linear. Hence, $\nabla$ is linear in each entry. Second, since both the above maps respect restriction, so does $\nabla$.
    
    To finish showing that $\nabla$ is a covariant derivative, we need to show that it is a derivation in the second entry. For this purpose, let $f\in C^\infty(U,\bC)$ and $z\in\widetilde{\pi}^{-1}(U)$. Then we have
    \begin{align*}
    \nabla_\xi(fs)(\pi(z))&=(\overline{\xi}_z \phi_{fs})z\\
    &=(\overline{\xi}_z f\circ\pi \phi_{s})z\\
    &=((\overline{\xi}_z f\circ\pi) \phi_{s}(z)+f(\pi(z))\overline{\xi}_z \phi_s)z\\
    &=(\xi_{\pi(z)}f)s(\pi(z))+f(\pi(z))\nabla_\xi (\pi(z)).
    \end{align*}
    Since $z$ was arbitrary and $\widetilde{\pi}:L^+\ra M$ is surjective, the result follows.
    
    Finally, we need to demonstrate equation (1) holds.
    Let $s\in\Gamma(U,L^+)$ and $\xi\in \mfX_\bC(M)$. Observe that for all $x\in M$, we have $ds_x(\xi_x)\in T_{s(x)}L^+$. Hence, we can decompose
    \bge
    ds_x(\xi_x)=Hor(ds_x(\xi_x))+Ver(ds_x(\xi_x)).
    \ene
    Now, observe that since $\pi\circ s=\id$, we have 
    \bge
    Hor(ds_x(\xi_x))=\overline{\xi}_{s(x)}.
    \ene
    Recall the fundamental vector field $X_1$ on $L^+$ as in section 3.1. It's easy to see that
    \bge
    Ver(ds_x(\xi_x))=\varphi(ds_x(\xi_x))(X_1)_{s(x)}.
    \ene
    Thus,
    \bge
    ds_x(\xi_x)=\overline{\xi}_{s(x)}+\varphi(ds_x(\xi_x))(X_1)_{s(x)}.
    \ene
    Using this, we may then compute 
    \begin{align*}
    \overline{\xi}_{s(x)} \phi_s&=(dF_s)_{s(x)}(\overline{\xi}_{s(x)})\\
    &=(d\phi_s)_{s(x)}(ds_x(\xi_x)-\varphi(ds_x(\xi_x))(X_1)_{s(x)})\\
    &=d(\phi_s\circ s)_x(\xi_x)-\varphi(ds_x(\xi_x))(d\phi_s)_{s(x)}((X_1)_{s(x)})\\
    &=2\pi i s^*\varphi(\xi_x).
    \end{align*}
    Hence, since $\overline{\xi}$ is invariant, one can compute for any $z\in L^+$
    \bge
    \nabla_\xi s(\pi(z))=(\overline{\xi}_z \phi_s)z=[2\pi i s^*\varphi(\xi_{\pi(z)}) \phi_s(z)]z=[2\pi i s^*\varphi(\xi_{\pi(z)})]s(\pi(z)).
    \ene
    
    \item[(ii)] Let $\nabla$ be a covariant derivative on $L$. If the connection 1-form $\varphi$ exists, then it is uniquely characterized by equation (1). For if $\varphi'$ is another such connection 1-form, then we have for all local sections $s$ on $L^+$
    \bge
    s^*\varphi=s^*\varphi'.
    \ene
    Since every element of the tangent space of $L^+$ can be obtained from the image of a local section, we thus conclude $\varphi=\varphi'$. Hence, it suffices to show that $\varphi$ exists locally, i.e. on the trivial bundle.
    
    Let $L=M\times \bC$ be the trivial bundle. As in a previous example, we may identify the sections of $L$ with complex valued functions on $M$, i.e. a section of $L$ is of the form $(\id_M,f)$, where $f\in C^\infty(M,\bC)$. Furthermore, we showed that
    \bge
    \mfX_\bC(M)\times \Gamma(L)\ra \Gamma(L);\quad (\xi,(\id_M,f))\mapsto (\id, df(\xi))
    \ene
    is a covariant derivative. Hence, since $Co(L)$ is affine, if $\nabla$ is a covariant derivative on $L$, there exists $B\in\Omega^1(M)\otimes\bC$ such that for all $f\in C^\infty(M,\bC)$
    \bge
    \nabla_\xi s=(\id_M,df(\xi)+2\pi if B(\xi)).
    \ene
    If $z$ is the coordinate on $\bC$, define 
    \bge
    \varphi=\bigg(B,\frac{1}{2\pi i}\frac{dz}{z}\bigg)\in\Omega^1(L)\otimes\bC.
    \ene 
    But we showed in an even earlier example that $\varphi$ is a connection 1-form. So all that's left to do is to verify equation (2).
    
    We then see that if $s(x)=(x,f(x))$, then
    \begin{align*}
    2\pi i s^*\varphi(\xi)s&=2\pi i \varphi(\xi, df(\xi))s\\
    &=2\pi i\bigg(B,\frac{1}{2\pi i}\frac{dz}{z}\bigg(\xi,df(\xi)))s\\
    &=2\pi i\bigg[B(\xi)+\frac{1}{2\pi i}\frac{df}{f}(\xi)\bigg]s\\
    &=(\id, df(\xi)+2\pi i f B(\xi)).
    \end{align*}
    And this concludes the proof.
\end{itemize}
\end{proof}

We can use this theorem to get a description of the curvature of the covariant derivative. Fix a line bundle $(L,\nabla)$ with covariant derivative over $M$ and let $\varphi$ the corresponding connection 1-form on $L^+$.

\begin{cor}
If $R$ is the curvature of $\nabla$, then for any open subset $U\subset M$ and non-vanishing section $s\in\Gamma(U,L^+)$, we have
\bge
2\pi i s^*d\varphi=R.
\ene
\end{cor}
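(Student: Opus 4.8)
The plan is to compute the curvature $R$ directly from the defining formula $R(\xi,\eta)s = [\nabla_\xi,\nabla_\eta]s - \nabla_{[\xi,\eta]}s$, using the local description $\nabla_\xi s = 2\pi i\, (s^*\varphi(\xi))\, s$ provided by Theorem (the covariant-derivative/connection-1-form correspondence). Fix an open $U \subset M$ on which a non-vanishing section $s \in \Gamma(U,L^+)$ exists; since $R$ is a pointwise tensorial object on $U$, verifying the identity $2\pi i\, s^* d\varphi = R$ against the single section $s$ suffices. Write $A := s^*\varphi \in \Omega^1(U)\otimes\bC$, so that $\nabla_\xi s = 2\pi i\, A(\xi)\, s$ on $U$.

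The computation proceeds by expanding $\nabla_\xi(\nabla_\eta s)$ using the Leibniz rule (property (ii) of a covariant derivative): $\nabla_\xi(\nabla_\eta s) = \nabla_\xi\big(2\pi i\, A(\eta)\, s\big) = 2\pi i\,(\xi A(\eta))\, s + (2\pi i)^2 A(\eta)A(\xi)\, s$. Antisymmetrizing in $\xi,\eta$, the quadratic terms $(2\pi i)^2 A(\eta)A(\xi)$ and $(2\pi i)^2 A(\xi)A(\eta)$ cancel since $A(\xi),A(\eta)$ are scalar functions, leaving $[\nabla_\xi,\nabla_\eta]s = 2\pi i\,\big(\xi A(\eta) - \eta A(\xi)\big)\, s$. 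Subtracting $\nabla_{[\xi,\eta]}s = 2\pi i\, A([\xi,\eta])\, s$ gives
\begin{equation*}
R(\xi,\eta)s = 2\pi i\,\big(\xi A(\eta) - \eta A(\xi) - A([\xi,\eta])\big)\, s = 2\pi i\, (dA)(\xi,\eta)\, s,
\end{equation*}
where the last equality is the intrinsic formula for the exterior derivative of a 1-form. Since $dA = d(s^*\varphi) = s^*(d\varphi)$ by naturality of $d$ under pullback, this reads $R = 2\pi i\, s^* d\varphi$ on $U$, as claimed.

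The only subtlety — and the step I would be most careful about — is the well-definedness/independence of the resulting expression from the choice of non-vanishing section $s$: a different $s' = g s$ with $g \in C^\infty(U,\bC^*)$ changes $A$ by the closed correction $\tfrac{1}{2\pi i}\tfrac{dg}{g}$ (this is exactly the transition behaviour of connection 1-forms visible in the trivial-bundle example), and one must check that $s^* d\varphi$ is unaffected, i.e. that $d$ of this correction term vanishes. This is immediate since $d(dg/g) = 0$, but it is worth noting explicitly because it is what makes $R$ a globally defined element of $\Omega^2(M)\otimes\End(L)$ rather than merely a local one. With that remark in place the corollary follows.
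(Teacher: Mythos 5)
Your proposal is correct and follows essentially the same route as the paper: set $A = s^*\varphi$, expand $[\nabla_\xi,\nabla_\eta]s - \nabla_{[\xi,\eta]}s$ via the Leibniz rule so the quadratic terms cancel, recognize the intrinsic formula for $dA$, and pull back through $s$. Your closing remark on independence of the choice of non-vanishing section is a worthwhile addition the paper leaves implicit (it is what lets the next corollary assemble $R$ into a global form), but it does not change the argument.
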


\begin{proof}
Set $\alpha=s^*\varphi$. We then have for any local vector field $\xi$ on $M$
\bge
\nabla_\xi s=2\pi i \alpha(\xi)s.
\ene
Now, if $\eta$ is any other local vector field, we can compute
\begin{align*}
\nabla_\xi(\nabla_\eta s)&=2\pi i\nabla_\xi(\alpha(\eta) s)\\
&=2\pi i[\xi(\alpha(\eta))s+\alpha(\eta)\nabla_\xi s]\\
&=2\pi i \xi(\alpha(\eta))s-4\pi^2\alpha(\eta)\alpha(\xi)s.
\end{align*}
Using this, we can then compute,
\bge
R(\xi,\eta)s=[\nabla_\xi,\nabla_\eta]s-\nabla_{[\xi,\eta]}s=2\pi i d\alpha(\xi,\eta)s.
\ene
Hence,
\bge
2\pi id\alpha=2\pi id(s^*\varphi)=2\pi is^*d\varphi=R.
\ene
\end{proof}

From here, one can deduce the following corollary.

\begin{cor}
The curvature $R$ of $\nabla$ is a closed complex 2-form on $M$ satisfying.
\begin{equation}
\pi^*R=2\pi id\varphi.
\end{equation}
\end{cor}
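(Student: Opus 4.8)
The plan is to read off both claims from the preceding corollary, which gives the local formula $R = 2\pi i\, s^*d\varphi$ for any non-vanishing section $s$ of $L^+$ over an open set. As a preliminary remark, since $L$ has $1$-dimensional fibres we have $\End(L_x)\cong\bC$ canonically, so throughout I will regard $R$ as an honest complex $2$-form on $M$; this is what makes both sides of $\pi^*R = 2\pi i\, d\varphi$ complex $2$-forms on $L^+$ (and recall $\widetilde\pi = \pi|_{L^+}$, so the pullback is unambiguous).

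First I would settle closedness, which is purely local: over a trivializing open $U$ with non-vanishing section $s\in\Gamma(U,L^+)$, the preceding corollary gives $R = 2\pi i\, d(s^*\varphi)$ on $U$, which is exact and hence closed; since $d$ commutes with restriction and such $U$ cover $M$, it follows that $dR = 0$.

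Next, for the identity $\pi^*R = 2\pi i\, d\varphi$ on $L^+$, I would again work over a trivializing open $U$, using the explicit model of $\varphi$ from the proof of the covariant-derivative/connection-form correspondence: on $\widetilde\pi^{-1}(U)\cong U\times\bC^*$ one has $\varphi = \big(B,\ \frac{1}{2\pi i}\frac{dz}{z}\big)$ for a suitable $B\in\Omega^1(U)\otimes\bC$, so $d\varphi = (dB,0)$ because $\frac{dz}{z} = d\log z$ is closed. Evaluating the preceding corollary on the constant section $x\mapsto(x,1)$ gives $R = 2\pi i\, dB$ on $U$, and since $\widetilde\pi$ is the first projection on $U\times\bC^*$ this yields $\pi^*R = (2\pi i\, dB,0) = 2\pi i\, d\varphi$ over $\widetilde\pi^{-1}(U)$. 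As both sides are globally defined, these local identities glue to the global one. A coordinate-free alternative would be to show $\beta := \pi^*R - 2\pi i\, d\varphi$ is $\bC^*$-invariant and horizontal — contracting with the fundamental field $X_1$ annihilates $\pi^*R$ since $d\widetilde\pi(X_1) = 0$, and annihilates $d\varphi$ since $X_1\intprod d\varphi = \mL_{X_1}\varphi - d(X_1\intprod\varphi) = 0$ — hence $\beta$ descends to a $2$-form $\gamma$ on $M$, and $\gamma = s^*\beta = R - 2\pi i\, s^*d\varphi = 0$.

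I expect no real obstacle here: everything reduces to the preceding corollary plus bookkeeping. The one step that takes a moment's care is promoting the sectional identity $R = 2\pi i\, s^*d\varphi$, an equation of forms on the base, to an equation of forms on the total space $L^+$ — handled either by the explicit trivialization above or by the observation that $\pi^*R - 2\pi i\, d\varphi$ is a basic form and is therefore determined by its pullback along any local section.
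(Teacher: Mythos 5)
Your proposal is correct, and it actually does a bit more work than the paper. The closedness argument is identical: $dR = 2\pi i\, d(s^*d\varphi) = 2\pi i\, s^*d^2\varphi = 0$ locally. For the identity $\pi^*R = 2\pi i\, d\varphi$, the paper's proof is the terser cousin of your coordinate-free alternative: it observes that $\pi\circ s = \id_U$ gives $R = s^*\pi^*R$, hence $\pi^*R - 2\pi i\, d\varphi \in \ker s^*$ for every local non-vanishing $s$, and then simply asserts the conclusion "since $s$ was arbitrary." That assertion silently needs the fact that a form on $L^+$ killed by every $s^*$ and also by contraction with the vertical directions must vanish; your computation $X_1\intprod d\varphi = \mL_{X_1}\varphi - d(X_1\intprod\varphi) = 0$ together with $d\widetilde\pi(X_1)=0$ is exactly the missing vertical check (you should also record $\bC^*$-invariance, which follows from $\lambda^*\varphi=\varphi$ and $\pi\circ\lambda=\pi$, to conclude the difference is basic). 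Your primary route — computing in the trivialization $\varphi = \bigl(B,\tfrac{1}{2\pi i}\tfrac{dz}{z}\bigr)$, so that $d\varphi=(dB,0)$ and both sides become $(2\pi i\, dB,0)$ — is a genuinely different, purely computational path that the paper does not take; it buys complete explicitness at the cost of invoking the specific local normal form of $\varphi$ from the proof of the correspondence theorem. Either way the statement is established; there is no gap in your argument.
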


\begin{proof}
It's clear by the above corollary that $R\in\Omega^2(M)\otimes \bC$. It's also clear that $R$ is closed since
\bge
dR=2\pi i d(s^*d\varphi)=2\pi is^*d^2\varphi=0.
\ene
To show equation (3), let $U\subset M$ be an open subset and $s\in\Gamma(U,L^+)$. Note that since $\pi\circ s=\id_U$, we have
\bge
R=s^*\pi^*R=2\pi is^*d\varphi.
\ene
Hence, $\pi^* R-2\pi id\varphi\in\ker s^*$. Since $s$ was arbitrary, we conclude the corollary.
\end{proof}

\subsection{Local Systems on a Line Bundle}

We finish this section with a technical discussion on the local nature of line bundles. This will be useful when we want to construct a line bundle from a closed 2-form on a manifold. This discussion closely follows \cite{kostant} and \cite{jean}.

\begin{defs}
A good cover of $M$ is an open cover $\{U_i\}_{i\in I}$ such that for all $i_0,\dots,i_m\in I$ we have $U_{i_0}\cap\cdots\cap U_{i_m}$ is either empty or connected and simply connected.
\end{defs}

\begin{prop}
On a paracompact manifold, every open cover admits a refinement which is a good cover.
\end{prop}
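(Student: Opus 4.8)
The statement to prove is that on a paracompact manifold $M$, every open cover admits a refinement which is a good cover (in the sense that all finite intersections of cover elements are empty or connected and simply connected). The plan is the standard Riemannian–geometry argument: equip $M$ with a Riemannian metric, use the existence of geodesically convex neighbourhoods, and then extract a locally finite refinement by paracompactness.

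\medskip

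First I would fix a Riemannian metric $g$ on $M$; this exists because $M$ is paracompact (patch local metrics with a partition of unity, and note positive-definiteness is preserved under convex combination). The key input from Riemannian geometry is that every point $x \in M$ has a \emph{geodesically convex} neighbourhood: an open set $V$ such that any two points of $V$ are joined by a unique minimizing geodesic lying entirely in $V$. I would cite this (e.g. do Carmo, or Lee's Riemannian geometry text) rather than prove it. The crucial closure property is that a finite intersection of geodesically convex sets is again geodesically convex: if $x, y \in V_1 \cap \dots \cap V_m$, the unique minimizing geodesic between them lies in each $V_j$, hence in the intersection. A geodesically convex set is in particular connected (join everything to a basepoint by a geodesic) and, via the normal-coordinate exponential map, diffeomorphic to a star-shaped open subset of $\bR^n$, hence contractible and so simply connected. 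Therefore the intersection of finitely many geodesically convex sets is empty, or connected and simply connected — exactly the good-cover condition.

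\medskip

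Next, given an arbitrary open cover $\{U_i\}_{i \in I}$ of $M$, I would build, for each point $x \in M$, a geodesically convex neighbourhood $W_x$ small enough to be contained in some $U_{i(x)}$ — possible because geodesically convex neighbourhoods form a neighbourhood basis at each point. The collection $\{W_x\}_{x \in M}$ is then an open refinement of $\{U_i\}$. By paracompactness, pass to a locally finite open refinement $\{V_j\}_{j \in J}$ of $\{W_x\}$; shrinking further, we may take each $V_j$ to be itself one of the $W_x$'s (or an open subset of one that is still geodesically convex — in fact it is cleanest to use the standard fact that a locally finite refinement can be chosen consisting of geodesically convex sets, by repeating the $W_x$ construction inside a given locally finite refinement). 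Then $\{V_j\}$ is a locally finite open refinement of $\{U_i\}$, each $V_j$ geodesically convex, and by the previous paragraph every finite intersection $V_{j_0} \cap \dots \cap V_{j_m}$ is empty or connected and simply connected. This is the desired good cover.

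\medskip

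The main obstacle — and the only genuinely substantive point — is getting the refinement to consist of geodesically convex sets while \emph{also} being locally finite, since the paracompactness refinement step does not a priori preserve geodesic convexity. The clean fix is the two-stage procedure: first take a locally finite refinement $\{V_j\}$ of the original cover (ignoring convexity), then for each $x$ choose a geodesically convex $W_x \subset V_{j}$ for some $j$ with $x \in V_j$; the cover $\{W_x\}$ refines $\{V_j\}$ and inherits local finiteness from it (each $W_x$ meets only finitely many $V_j$, and each point has a neighbourhood meeting finitely many $V_j$, hence finitely many $W_x$ since each $W_x$ sits inside some $V_j$ — one should check this inheritance carefully, which is the one spot deserving a line of proof). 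Everything else is either quoted (existence of convex neighbourhoods, partition-of-unity construction of the metric) or immediate from the convexity closure property.
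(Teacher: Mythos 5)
The paper does not actually prove this proposition --- it simply cites Brylinski --- so there is no in-paper argument to compare against; your Riemannian argument is the standard one (it is essentially the proof in Bott--Tu, which the paper cites elsewhere), and its core is correct: geodesically convex neighbourhoods form a basis, a finite intersection of geodesically convex sets is geodesically convex, and a geodesically convex set is diffeomorphic via the exponential map to a star-shaped open subset of $\bR^n$, hence contractible, hence connected and simply connected.

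The one genuine flaw is the local-finiteness inheritance claim at the end. Choosing one geodesically convex $W_x\subset V_{j(x)}$ for \emph{each point} $x\in M$ does not produce a locally finite family, even though $\{V_j\}$ is locally finite: infinitely many of the $W_x$ can sit inside a single $V_j$ and accumulate at a point (take $V_j=\bR^n$ and $W_x$ a small ball about each $x$). Your parenthetical justification --- ``each point has a neighbourhood meeting finitely many $V_j$, hence finitely many $W_x$'' --- conflates the index sets and is false as stated. Fortunately, none of this is needed: the definition of a good cover used in this paper requires only that all finite intersections be empty or connected and simply connected, with no local finiteness condition. So the first-stage cover $\{W_x\}_{x\in M}$ by geodesically convex sets, each contained in some $U_{i(x)}$, is already a good cover refining $\{U_i\}$, and the entire second half of your argument can be discarded. (If you do want a locally finite good cover, the standard repair is to exhaust each component of $M$ by compact sets and use only finitely many convex sets per compact annulus, rather than one per point.)
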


\begin{proof}
See \cite{jean}
\end{proof}

Let $M$ be a paracompact manifold, $L\xrightarrow{\pi}M$ a line bundle over $M$, and $L^+\xrightarrow{\widetilde{\pi}}M$ denote the frame bundle of $L$ over $M$, where
\bge
\widetilde{\pi}=\pi|_{L^+}.
\ene

\begin{defs}
A local system of $L\ra M$ is a collection of pairs $\{(U_i,s_i)\}_{i\in I}$ such that
\begin{itemize}
    \item[(i)] $\{U_i\}_{i\in I}$ is a good cover of $M$
    \item[(ii)] For all $i\in I$, $s_i\in\Gamma(U_i,L^+)$.
\end{itemize}
If $U_i\cap U_j\neq\emptyset$, then define $g_{i}:U_i\cap U_j\ra\bC^*$ uniquely by
\bge
s_i=g_{ij} s_j.
\ene
Call the $g_{ij}$ the transition functions of the local system.
\end{defs}

In a local system, we can break our analysis of a line bundle down into the discussion of the trivial bundle, locally. We could always do this of course, but a local system then gives us a way of piecing the local analysis together again to give a global result. That's great, but we first need them to exist.

\begin{prop}
$L\ra M$ admits a local system.
\end{prop}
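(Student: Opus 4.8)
The plan is to build a local system directly from the two existence results already in hand: every open cover of a paracompact manifold admits a good-cover refinement (Proposition on good covers), and line bundles are locally trivial (Definition of line bundle). First I would start with the trivializing cover $\{V_\alpha\}$ of $L\ra M$ guaranteed by condition (ii) in the definition of a line bundle. By the good-cover proposition, there is a refinement $\{U_i\}_{i\in I}$ of $\{V_\alpha\}$ which is a good cover of $M$; since it refines the trivializing cover, each $U_i$ sits inside some $V_{\alpha(i)}$, and hence $L|_{U_i}\cong U_i\times\bC$ via a trivialization $\psi_i$ whose restriction to each fibre is a linear isomorphism.

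Next I would produce, for each $i$, a nowhere-vanishing section $s_i\in\Gamma(U_i,L^+)$. This is the only place anything needs checking, and it is immediate: using the trivialization $\psi_i:\pi^{-1}(U_i)\ra U_i\times\bC$, set $s_i(x):=\psi_i^{-1}(x,1)$. Since $\psi_i$ restricts to a linear isomorphism $L_x\ra\{x\}\times\bC$ on each fibre and $1\neq 0$, the value $s_i(x)$ is a nonzero element of $L_x$, so $s_i(x)\in L_x^+$ and thus $s_i\in\Gamma(U_i,L^+)$; smoothness of $s_i$ follows from smoothness of $\psi_i^{-1}$ and of $x\mapsto(x,1)$. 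Then the collection $\{(U_i,s_i)\}_{i\in I}$ satisfies both defining conditions of a local system: $\{U_i\}$ is a good cover by construction, and each $s_i$ is a non-vanishing local section of $L$. On overlaps $U_i\cap U_j\neq\emptyset$, both $s_i(x)$ and $s_j(x)$ are bases of the one-dimensional space $L_x$, so there is a unique scalar $g_{ij}(x)\in\bC^*$ with $s_i(x)=g_{ij}(x)s_j(x)$; writing this as $g_{ij}=s_i/s_j$ in the notation of the earlier remark shows $g_{ij}:U_i\cap U_j\ra\bC^*$ is smooth, so the transition functions are well-defined.

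There is essentially no obstacle here — the statement is a bookkeeping consequence of local triviality together with the existence of good-cover refinements on paracompact manifolds. The only mild subtlety is making sure the good cover is taken \emph{refining} a trivializing cover, rather than an arbitrary one, so that each $U_i$ genuinely carries a trivialization; once that is arranged, the sections $s_i$ are handed to us by the trivializations and everything else is automatic.
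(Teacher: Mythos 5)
Your proposal is correct and follows essentially the same route as the paper: take a trivializing cover, refine it to a good cover using paracompactness, and define the non-vanishing sections by $s_i(x)=\psi_i^{-1}(x,1)$. The only difference is the order of operations (you refine before defining the sections, the paper defines them first and then refines), which is immaterial.
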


\begin{proof}
Let $\{U_i\}_{i\in I}$ be an open cover of $M$ such that for all $i\in I$ there exists a trivialization 
\bge
\psi_i:\pi^{-1}(U_i)\ra U_i\times\bC.
\ene
For all $x\in U_i$, define
\bge
s_i(x):=\psi_i^{-1}(x,1).
\ene
Since $\psi_i$ is a diffeomorphism and a linear isomorphism on each fibre, we have $s_i\in\Gamma(U_i,L^+)$.

Now, since $M$ is paracompact, we may refine the cover $\{U_i\}_{i\in I}$ into a good cover. Since refinement replaces our sets with even smaller sets, we will still have a trivializing cover. Hence, after some refinement, we may make $\{(U_i,s_i)\}_{i\in I}$ into a local system.
\end{proof}

A useful, but easy result is the following fact. We shall make use of it many times throughout this paper.
\begin{fact}
If $\{(U_i,s_i)\}_{i\in I}$ is a local system and $g_{ij}$ are the transition functions, then if $U_i\cap U_j\cap U_k\neq\emptyset$, we have
\begin{equation}
g_{ik}=g_{ij}g_{jk}.
\end{equation}
We call equation (4) the cocycle condition.
\end{fact}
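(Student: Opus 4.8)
The plan is to verify the cocycle condition directly from the defining relations of the transition functions. Recall that for a local system $\{(U_i,s_i)\}_{i\in I}$, whenever $U_i\cap U_j\neq\emptyset$ the function $g_{ij}\in C^\infty(U_i\cap U_j,\bC^*)$ is the unique smooth map satisfying $s_i=g_{ij}s_j$ on the overlap, where we are using the notation $t/s$ from the preceding remark; equivalently $g_{ij}=s_i/s_j$. So the entire content of the claim is an elementary manipulation of these equalities of non-vanishing sections over the triple overlap $U_i\cap U_j\cap U_k$.

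First I would restrict attention to the set $U_i\cap U_j\cap U_k$, which by hypothesis is nonempty (and in fact connected and simply connected, though we do not even need that here). On this set all three relations $s_i=g_{ij}s_j$, $s_j=g_{jk}s_k$, and $s_i=g_{ik}s_k$ hold simultaneously. Substituting the second into the first gives $s_i=g_{ij}(g_{jk}s_k)=(g_{ij}g_{jk})s_k$, where I would note that scalar multiplication of sections is associative and that $g_{ij}g_{jk}$ is again a smooth $\bC^*$-valued function on the triple overlap, since $\bC^*$ is closed under multiplication. Thus both $g_{ik}$ and $g_{ij}g_{jk}$ are smooth functions $U_i\cap U_j\cap U_k\to\bC^*$ satisfying $s_i=(\,\cdot\,)s_k$.

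Finally I would invoke uniqueness: since $s_k$ is a non-vanishing section, the coefficient function expressing $s_i$ in terms of $s_k$ is uniquely determined (this is exactly the well-definedness of the notation $t/s$ for $s\in\Gamma(U,L^+)$ recorded in the remark preceding the statement, and is also implicit in the definition of the transition functions). Hence $g_{ik}=g_{ij}g_{jk}$ on $U_i\cap U_j\cap U_k$, which is equation (4).

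There is no real obstacle here; the only thing to be slightly careful about is to state clearly that the product $g_{ij}g_{jk}$ takes values in $\bC^*$ (so that it is a legitimate candidate for a transition function) and to appeal to the uniqueness clause rather than just asserting the identity. One could alternatively phrase the whole argument fibrewise: at each point $x\in U_i\cap U_j\cap U_k$ one has elements of the one-dimensional vector space $L_x\setminus\{0\}$, and the identity $g_{ik}(x)=g_{ij}(x)g_{jk}(x)$ is then just the statement that the ratio is multiplicative, which is immediate in a one-dimensional vector space. I would present the global version as it is cleaner and matches the paper's notation.
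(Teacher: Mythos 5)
Your proof is correct and is exactly the elementary argument the paper has in mind when it calls this ``a useful, but easy result'' and omits the proof: substitute $s_j=g_{jk}s_k$ into $s_i=g_{ij}s_j$ and conclude $g_{ik}=g_{ij}g_{jk}$ from the uniqueness of the coefficient expressing one non-vanishing section in terms of another. Your care in noting that $g_{ij}g_{jk}$ is $\bC^*$-valued and in appealing explicitly to the uniqueness clause is appropriate and adds nothing incorrect.
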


Just as we can piece together a manifold from local data, we can also piece a line bundle back together from a collection of transition functions. In a sense there is a one-to-one correspondence between transition functions and line bundles, see \cite{jean} for a more precise formulation of this result.

\begin{prop}
Let $\{U_i\}_{i\in I}$ be a good cover of $M$ and suppose for each $i,j\in I$ such that $U_i\cap U_j\neq\emptyset$, there exists smooth $g_{ij}:U_i\cap U_j\ra\bC^*$ satisfying the coycle condition of equation (4). Then there exists a line bundle $L\ra M$ with local system $\{(U_i,s_i)\}_{i\in I}$ such that $g_{ij}$ are the transition functions.
\end{prop}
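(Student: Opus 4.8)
The plan is to build the total space of $L$ as a quotient of the disjoint union $\bigsqcup_{i\in I} (U_i\times\bC)$ by the equivalence relation generated by the transition functions, and then to verify that the resulting object is a line bundle with the prescribed local system. First I would form the set $\widetilde{L}=\bigsqcup_{i\in I}\{i\}\times U_i\times\bC$ and declare $(i,x,v)\sim(j,y,w)$ whenever $x=y\in U_i\cap U_j$ and $v=g_{ij}(x)w$. The cocycle condition of equation (4), together with $g_{ii}=1$ (which follows from (4) by setting $i=j=k$) and $g_{ij}=g_{ji}^{-1}$, is exactly what makes $\sim$ reflexive, symmetric, and transitive, so it is a genuine equivalence relation. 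Let $L=\widetilde{L}/\sim$ with the quotient topology and let $\pi:L\ra M$ send the class of $(i,x,v)$ to $x$; this is well-defined since equivalent points have the same $M$-coordinate.

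Next I would put a smooth manifold structure on $L$. Each map $\psi_i:\pi^{-1}(U_i)\ra U_i\times\bC$ sending $[(i,x,v)]\mapsto(x,v)$ is a bijection (every class meeting $\pi^{-1}(U_i)$ has a unique representative with first index $i$, since $\sim$ restricted to a single chart is trivial), and on overlaps the transition map $\psi_i\circ\psi_j^{-1}:(U_i\cap U_j)\times\bC\ra(U_i\cap U_j)\times\bC$ is $(x,w)\mapsto(x,g_{ij}(x)w)$, which is a diffeomorphism because $g_{ij}$ is smooth and $\bC^*$-valued. Declaring the $\psi_i$ to be diffeomorphisms therefore equips $L$ with a smooth atlas; Hausdorffness and second countability are inherited from $M$ via these charts (one checks two points in different fibres are separated by preimages of opens in $M$, and two points in the same fibre over $x$ lie in a common chart $\pi^{-1}(U_i)\cong U_i\times\bC$). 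The fibre $L_x=\pi^{-1}(x)$ gets its $\bC$-vector space structure by transporting that of $\bC$ through any $\psi_i$ with $x\in U_i$; this is independent of $i$ precisely because $w\mapsto g_{ij}(x)w$ is $\bC$-linear. Thus conditions (i) and (ii) of Definition 7 hold and $L\ra M$ is a line bundle.

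Finally, define $s_i:U_i\ra L^+$ by $s_i(x)=[(i,x,1)]$. Since $\psi_i(s_i(x))=(x,1)$ has nonzero fibre component, $s_i(x)\in L^+_x$, and $s_i$ is smooth being $\psi_i^{-1}$ composed with $x\mapsto(x,1)$; so $s_i\in\Gamma(U_i,L^+)$. After refining $\{U_i\}_{i\in I}$ to a good cover if it is not already one (possible by Proposition 10, and harmless since the construction restricts to smaller opens), $\{(U_i,s_i)\}_{i\in I}$ is a local system. To see its transition functions are the given $g_{ij}$: over $U_i\cap U_j$ we have $s_i(x)=[(i,x,1)]=[(j,x,g_{ji}(x))]=[(j,x,g_{ij}(x)^{-1})]$, and comparing with $s_j(x)=[(j,x,1)]$ under the $\bC$-linear identification of $L_x$ with $\bC$ via $\psi_j$ gives $s_i(x)=g_{ij}(x)s_j(x)$, as desired.

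The routine points are the manifold axioms for the quotient; the one genuinely load-bearing step — and the main obstacle to write cleanly — is verifying that $\sim$ is transitive and that the charts $\psi_i$ are well-defined bijections, both of which rest entirely on the cocycle condition (4). Once that is pinned down, every subsequent verification (linearity of fibre structure, smoothness of $s_i$, recovery of the $g_{ij}$) is immediate from the explicit transition maps $(x,w)\mapsto(x,g_{ij}(x)w)$.
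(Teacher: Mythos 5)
Your construction is correct in outline but takes a genuinely different route from the paper's. The paper first builds a $\bC^*$-principal bundle $P$ by gluing the pieces $U_i\times\bC^*$ along the $g_{ij}$ and then forms the associated bundle $L=(P\times\bC)/\sim$; you glue the trivial line bundles $U_i\times\bC$ directly. Your route is more elementary and self-contained (no detour through the associated-bundle construction), while the paper's has the advantage that the principal bundle it produces is exactly the frame bundle $L^+$ on which the connection 1-forms of Section 3 live, so it dovetails with the later machinery. Both arguments rest on the same two uses of the cocycle condition: it makes the gluing relation an equivalence relation (via $g_{ii}=1$ and $g_{ij}=g_{ji}^{-1}$, which you correctly extract from equation (4)), and it makes the chart changes $(x,w)\mapsto(x,g_{ij}(x)w)$ consistent, which is the load-bearing step you rightly identify.

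There is one slip in your final step. With your convention $(i,x,v)\sim(j,x,w)\iff v=g_{ij}(x)w$, you correctly compute $s_i(x)=[(j,x,g_{ij}(x)^{-1})]$, but this element equals $g_{ij}(x)^{-1}s_j(x)$, not $g_{ij}(x)s_j(x)$: multiplying $s_j(x)=[(j,x,1)]$ by a scalar $\lambda$ produces the class with $j$-chart fibre coordinate $\lambda$, so the class with coordinate $g_{ij}(x)^{-1}$ is $g_{ij}(x)^{-1}s_j(x)$. As written, your local system therefore has transition functions $g_{ji}$ rather than $g_{ij}$. The fix is trivial: declare instead $(i,x,v)\sim(j,x,w)$ when $w=g_{ij}(x)v$ (equivalently $v=g_{ji}(x)w$); every other verification goes through verbatim. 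Also, the closing remark about refining to a good cover is unnecessary, since the hypothesis already supplies one.
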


\begin{proof}
We will use the good cover $\{U_i\}_{i\in I}$ and the $g_{ij}$ to construct a $\bC^*$-principal bundle, then use that to produce a line bundle which matches our requirements.

Let
\bge
W=\bigsqcup_{i\in I} U_i\times\bC^*.
\ene
Give $W$ the trivial action by $\bC^*$: if $(x,\mu)\in U_i\times\bC^*$ and $\lambda\in\bC^*$, define
\bge
(x,\mu)\cdot\lambda:=(x,\mu\lambda).
\ene

We impose equivalence relation on $W$: if $(x,\mu)\in U_i\cap U_j\times\bC^*$ then
\bge
(x,\mu)\sim (x,g_{ij}(x)\mu).
\ene
Define $P=W/\sim$ and let $Q:W\ra P$ be the natural quotient map. 

It's easy to see that the action of $\bC^*$ descends to $P$ and that this action is free. Further, if we restrict the quotient map $Q:W\ra P$ to $Q_i:U_i\times\bC^*\ra P$, then this map is a homeomorphism. Hence, we may use the smooth structure on $W$ to give $P$ a smooth structure. 

Finally, if we equip $P$ with the projection
\bge
\sigma:P\ra M;\quad Q_i(x,\mu)\mapsto x,\quad x\in U_i.
\ene
then $P$ is a $\bC^*$-principal bundle. See \cite{jean} for a more complete discussion of this construction.

Note that by construction, if $x\in U_i\cap U_j$, then
\bge
Q_j(x,\mu)\cdot g_{ij}(x)=Q_j(x,\mu g_{ij}(x))=Q_i(x,\mu).
\ene

Now, let's construct our line bundle. Let
\bge
L=(P\times\bC)/\sim
\ene
where $(p,z)\sim (p\cdot\lambda,\lambda^{-1}z)$. Give it projection 
\bge
\pi([p,z]):=\sigma(p).
\ene
It's an easy exercise to show that $\pi$ is well-defined. Further, if we fix $x\in M$, we can give $L_x$ a vector space structure as follows. If $[p,z],[q,w]\in \pi^{-1}(x)$, then there exists $\lambda\in\bC^*$ such that $q=p\cdot \lambda$. Define
\bge
[p,z]+[q,w]:=[p,z+\lambda w].
\ene
Again, another easy exercise to show that this addition is well-defined. For scalar multiplication, simply define
\bge
\lambda[p,z]:=[p,\lambda z],\quad\lambda\in\bC.
\ene

To show that $L$ is a smooth manifold and that $L\xrightarrow{\pi}M$ is locally trivial amounts to showing that the local trivialization which defines $P$ induces a local trivialization on $L$. See \cite{greub} for a for a more complete discussion of obtaining a line bundle from a $\bC^*$-principal bundle.

Finally, we need our local sections. For each $i\in I$, define
\bge
s_i(x)=[Q_i(x,1),1]\in L.
\ene
By construction, $s_i$ is a smooth non-vanishing section for all $i$. To see that that $g_{ij}$ are the transition functions, let $x\in U_i\cap U_j$. Then
\bge
s_j(x)\cdot g_{ij}(x)=[Q_j(x,1),g_{ij}(x)]=[Q_j(x,1)g_{ij}(x),1]=[Q_i(x,1),1]=s_i(x).
\ene
This completes the proof.
\end{proof}

\begin{lem}
Let $M$ be a manifold and $L=M\times\bC$ the trivial bundle. If $s\in\Gamma(M,\bC^*)$ and $\alpha\in\Omega^1(M)\otimes\bC$, then there exists unique covariant derivative $\nabla$ on $L$ such that for all local vector fields $\xi$ on $M$
\bge
\alpha(\xi)=\frac{\nabla_\xi s}{2\pi i s}.
\ene
\end{lem}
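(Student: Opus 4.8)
The plan is to reduce everything to the affine structure on $Co(L)$ together with the explicit "flat" covariant derivative on the trivial bundle. Recall that on $L=M\times\bC$ there is the covariant derivative $\nabla^{0}$ defined by $\nabla^{0}_{\xi}(\mathrm{id}_{M},g)=(\mathrm{id}_{M},dg(\xi))$, and that $Co(L)$ is an affine space over $\Omega^{1}(M)\otimes\bC$, so every covariant derivative is uniquely $\nabla=\nabla^{0}+\beta$ for some $\beta\in\Omega^{1}(M)\otimes\bC$, acting by $\nabla_{\xi}t=\nabla^{0}_{\xi}t+\beta(\xi)t$. Since $s$ is nowhere vanishing, write $s=(\mathrm{id}_{M},f)$ with $f\in C^{\infty}(M,\bC^{*})$. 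Then $\nabla^{0}_{\xi}s=(\mathrm{id}_{M},df(\xi))$, so $\dfrac{\nabla^{0}_{\xi}s}{s}=\dfrac{df(\xi)}{f}$ and $\dfrac{\beta(\xi)s}{s}=\beta(\xi)$, whence
\[
\frac{\nabla_{\xi}s}{2\pi i\,s}=\frac{1}{2\pi i}\left(\frac{df(\xi)}{f}+\beta(\xi)\right).
\]
Demanding that this equal $\alpha(\xi)$ for all local vector fields $\xi$ forces $\beta=2\pi i\,\alpha-\dfrac{df}{f}$, and conversely this choice of $\beta$ visibly makes the identity hold. So both existence and uniqueness come down to the single observation that $\beta:=2\pi i\,\alpha-df/f$ is a genuine element of $\Omega^{1}(M)\otimes\bC$.

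The only step requiring a moment's care — and the closest thing here to an obstacle — is checking that $df/f$ is a well-defined \emph{global} smooth complex $1$-form on $M$, even though a global branch of $\log f$ need not exist. This is immediate: $(df/f)_{x}(v):=df_{x}(v)/f(x)$ makes sense because $f(x)\in\bC^{*}$, and it is smooth since $f$ is smooth and nowhere zero (locally it coincides with $d\log f$ for a local branch of the logarithm, which incidentally shows it is closed, though we shall not need that). Granting this, $\beta=2\pi i\,\alpha-df/f\in\Omega^{1}(M)\otimes\bC$, and therefore $\nabla:=\nabla^{0}+\beta$ is a covariant derivative on $L$.

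To conclude I would simply verify the defining identity for this $\nabla$ by the displayed computation above, and for uniqueness observe that if $\nabla'=\nabla^{0}+\beta'$ also satisfies $\alpha(\xi)=\nabla'_{\xi}s/(2\pi i\,s)$, then the same computation gives $\beta'=2\pi i\,\alpha-df/f=\beta$, so $\nabla'=\nabla$ by the affineness of $Co(L)$. No further calculation is needed.
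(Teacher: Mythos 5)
Your proposal is correct and follows essentially the same route as the paper: both take the flat covariant derivative $\nabla^{0}$ on the trivial bundle, use the affine structure of $Co(L)$ over $\Omega^{1}(M)\otimes\bC$, and add the $1$-form $2\pi i\,\alpha - df/f$ to force the defining identity. You are slightly more explicit about uniqueness (via affineness) and about why $df/f$ is a globally defined smooth form, points the paper passes over with ``uniqueness easily follows,'' but the argument is the same.
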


\begin{proof}
Let $r=(\id_M,f_r)$ be a section of $L$. We've shown already in a previous example that
\bge
\nabla'_\xi r(x)=(x,\xi_x f_r)
\ene
is a covariant derivative.

Thus, for our non-vanishing section $s\in \Gamma(M,L^+)$, written $s=(\id_M,f)$, if we define the complex 1-form
\bge
A=2\pi i\alpha-\frac{df}{f},
\ene
then $\nabla=\nabla'+A$ is also a covariant derivative. We then simply compute for a local vector field $\xi$ on $M$:
\bge
\nabla_\xi s(x)=\bigg(x,\xi_xf_s+f(x)\bigg[2\pi i\alpha_x(\xi_x)-\frac{\xi_x f}{f}\bigg]\bigg)=(x, 2\pi i f(x)\alpha_x(\xi_x)).
\ene
Hence,
\bge
\nabla_\xi s=2\pi i\alpha(\xi)s
\ene
and we're done on existence. Uniqueness easily follows.
\end{proof}

\begin{thm}
Let $L\xrightarrow{\pi} M$ be a line bundle, $\{(U_i,s_i)\}_{i\in I}$ a local system with transition functions $g_{ij}$, and $\alpha_i\in\Omega^1(U_i)\otimes\bC$ a collection of complex 1-forms so that
\bge
\alpha_i-\alpha_j=\frac{1}{2\pi i}\frac{d g_{ij}}{g_{ij}}.
\ene
Then there exists a unique covariant derivative $\nabla$ such that if $\varphi$ is the corresponding connection 1-form and $R$ the curvature, we have
\begin{itemize}
    \item[(i)] $\alpha_i=s_i^*\varphi$
    \item[(ii)] $2\pi i d\alpha_i=R|_{U_i}$.
\end{itemize}
\end{thm}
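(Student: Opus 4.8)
The plan is to build $\nabla$ locally on each $U_i$ using the previous lemma, then check that the local pieces agree on overlaps so that they glue to a global covariant derivative; properties (i) and (ii) will then be essentially immediate from Theorem 6 and its corollary.

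First I would work on a single $U_i$. Since $\{U_i\}$ is a trivializing cover, the restricted bundle $L|_{U_i}$ is (via the section $s_i$) isomorphic to the trivial bundle $U_i\times\bC$, with $s_i$ corresponding to the non-vanishing section $(\id,1)$. Applying the preceding lemma to the pair $(s_i,\alpha_i)$ yields a unique covariant derivative $\nabla^{(i)}$ on $L|_{U_i}$ with $\nabla^{(i)}_\xi s_i = 2\pi i\,\alpha_i(\xi)\,s_i$ for every local vector field $\xi$.

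The key step is the gluing. On a nonempty overlap $U_i\cap U_j$ I must show $\nabla^{(i)}$ and $\nabla^{(j)}$ agree. Both are covariant derivatives on $L|_{U_i\cap U_j}$, so by Corollary (the affine-space statement) their difference is a well-defined $1$-form, and it suffices to evaluate this difference on the non-vanishing section $s_j$ (any non-vanishing section detects the difference). Using $s_i = g_{ij}s_j$ and the Leibniz rule,
\begin{align*}
\nabla^{(i)}_\xi s_j &= \nabla^{(i)}_\xi\!\left(g_{ij}^{-1}s_i\right) = (\xi g_{ij}^{-1})s_i + g_{ij}^{-1}\nabla^{(i)}_\xi s_i \\
&= -g_{ij}^{-2}(\xi g_{ij})g_{ij}s_j + g_{ij}^{-1}\cdot 2\pi i\,\alpha_i(\xi)\,g_{ij}s_j = \left(2\pi i\,\alpha_i(\xi) - \tfrac{\xi g_{ij}}{g_{ij}}\right)s_j,
\end{align*}
and by the hypothesis $\alpha_i - \alpha_j = \frac{1}{2\pi i}\frac{dg_{ij}}{g_{ij}}$ this equals $2\pi i\,\alpha_j(\xi)\,s_j = \nabla^{(j)}_\xi s_j$. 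Hence the $\nabla^{(i)}$ patch together: for open $U$, $\xi\in\mfX_\bC(U)$, and $s\in\Gamma(U,L)$, define $\nabla_\xi s$ by declaring $(\nabla_\xi s)|_{U\cap U_i} := \nabla^{(i)}_{\xi|}(s|)$; the computation shows this is independent of $i$, and the covariant-derivative axioms are inherited locally. Uniqueness follows because any $\nabla$ satisfying (i) must restrict on each $U_i$ to a covariant derivative sending $s_i$ to $2\pi i\,\alpha_i(\xi)s_i$, which the lemma pins down uniquely.

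Finally, properties (i) and (ii): let $\varphi$ be the connection $1$-form corresponding to $\nabla$ via Theorem 6(ii). By equation (2) of that theorem, $\nabla_\xi s_i = 2\pi i\,(s_i^*\varphi(\xi))s_i$; comparing with $\nabla_\xi s_i = 2\pi i\,\alpha_i(\xi)s_i$ and using that $s_i$ is non-vanishing gives $\alpha_i = s_i^*\varphi$, which is (i). Then (ii) is immediate from the corollary computing curvature, which states $2\pi i\,s_i^*d\varphi = R$ on $U_i$; substituting $s_i^*\varphi = \alpha_i$ gives $2\pi i\,d\alpha_i = R|_{U_i}$. The only real obstacle is the overlap computation above — making sure the cocycle/compatibility hypothesis on the $\alpha_i$ is exactly what is needed to force agreement — and everything else is bookkeeping with results already established.
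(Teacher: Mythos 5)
Your proposal is correct and follows essentially the same route as the paper: build each $\nabla^{(i)}$ from the preceding lemma, verify agreement on overlaps using $s_i=g_{ij}s_j$ together with the hypothesis $\alpha_i-\alpha_j=\frac{1}{2\pi i}\frac{dg_{ij}}{g_{ij}}$, and then read off (i) and (ii) from the connection-form correspondence and the curvature corollary. The only cosmetic difference is that you compute $\nabla^{(i)}_\xi s_j$ while the paper computes $\nabla^{(j)}_\xi s_i$, which is the same check in the opposite direction.
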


\begin{proof}
On $U_i$, by the previous proposition, there exists a unique covariant derivative $\nabla_i$ on $U_i$ such that
\bge
\alpha_i=\frac{\nabla_i s_i}{2\pi is_i}.
\ene
We will show that $\nabla_i=\nabla_j$ on $U_i\cap U_j$, which will then imply they piece together to form a globally defined covariant derivative. Suffices to show that
\bge
\nabla_j s_i=\nabla_i s_i.
\ene

Fix a local vector field $\xi$. Then we have
\begin{align*}
(\nabla_j)_\xi s_i&=(\nabla_j)_\xi (g_{ij}s_j)\\
&=(\xi g_{ij})s_j+g_{ij}(\nabla_j)_\xi s_j\\
&=dg_{ij}(\xi) s_j+2\pi ig_{ij}\alpha_j(\xi) s_j\\
&=2\pi i\bigg(\frac{1}{2\pi i}\frac{d g_{ij}(\xi)}{g_{ij}}+\alpha_j(\xi)\bigg)s_i\\
&=2\pi i\alpha_i(\xi)s_i\\
&=(\nabla_i)_\xi s_i.
\end{align*}

Now, let $\varphi$ be the corresponding connection 1-form. Then we have by previous theorem
\bge
\nabla_\xi s=2\pi is^*\varphi(\xi)s
\ene
for every nonvanishing local section $s$ of $L$. But this implies on $U_i$
\bge
\alpha_i=\frac{\nabla s_i}{2\pi is_i}=s^*\varphi.
\ene

It's then a simple matter of applying Corollary 5 to obtain
\bge
R|_{U_i}=2\pi i s_i^*d\varphi=2\pi i d\alpha_i.
\ene
This completes the proof.
\end{proof}

\newpage

\section{Existence of a Prequantization}
Before we prove the main theorem, there is the pesky matter of the definition of an integral cohomology class. We will need some \v{C}ech cohomology and a characterization of integrality given by Kostant \cite{kostant}. This tangent will be supplied by \cite{jean} and \cite{bott}.

\subsection{A Little \v{C}ech Cohomology}
The main uses of \v{C}ech cohomology for us will be the fact that we can compute cohomology with coefficents in any ring, so in particular $\bZ$. And, as we shall see, \v{C}ech cohomology is built into line bundles via local systems. 

As with any cohomology theory, we need cochains.

\begin{defs}
Let $\msU=\{U_i\}_{i\in I}$ be a cover of a topological space $X$ and $R$ a ring. For all $i_0,\dots,i_p\in I$, write
\bge
U_{i_0\cdots i_p}:=U_{i_0}\cap\cdots\cap U_{i_p}.
\ene
A Cech $p$-cochain $\mu$ with respect to $\msU$ is a collection of constant functions $\mu_{i_0\cdots i_p}:U_{i_0\cdots i_p}\ra R$ for all $i_0,\dots,i_p\in I$ with $U_{i_0\cdots i_p}\neq\emptyset$. Let $\check{C}^p(\msU,R)$ denote the collection of all \v{C}ech $p$-cochains.
\end{defs}

\begin{fact}
$\check{C}^p(\msU,R)$ is an abelian group for all $p$.
\end{fact}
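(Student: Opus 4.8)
The statement to prove is that $\check{C}^p(\msU, R)$ is an abelian group for all $p$. This is a routine verification, so my plan is short.

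\medskip

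The plan is to exhibit the abelian group structure explicitly and check the axioms pointwise. First I would define addition of two $p$-cochains $\mu, \nu \in \check{C}^p(\msU, R)$ by setting $(\mu + \nu)_{i_0\cdots i_p} := \mu_{i_0\cdots i_p} + \nu_{i_0\cdots i_p}$ on each nonempty intersection $U_{i_0\cdots i_p}$, where the addition on the right is that of the ring $R$. Since $\mu_{i_0\cdots i_p}$ and $\nu_{i_0\cdots i_p}$ are constant functions into $R$, their sum is again a constant function into $R$, so $\mu + \nu$ is a well-defined $p$-cochain. This is the only point that needs a word of justification, and it is immediate.

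\medskip

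Next I would verify the group axioms, all of which are inherited directly from $(R,+)$ componentwise. Associativity and commutativity of $+$ on $\check{C}^p(\msU,R)$ follow from the corresponding properties of $R$ applied on each $U_{i_0\cdots i_p}$. The identity element is the zero cochain $0$, defined by $0_{i_0\cdots i_p} \equiv 0_R$ on every nonempty intersection; one checks $\mu + 0 = \mu$. The inverse of $\mu$ is $-\mu$, defined by $(-\mu)_{i_0\cdots i_p} := -(\mu_{i_0\cdots i_p})$, which is again a constant function into $R$ and satisfies $\mu + (-\mu) = 0$. This completes the verification.

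\medskip

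There is no real obstacle here: the entire content is that a product (indexed by the nonempty intersections) of copies of the abelian group $(R,+)$ is again an abelian group, and constancy of the functions is preserved under all the operations. If one wanted to be maximally slick, one could simply observe that $\check{C}^p(\msU,R)$ is literally the product group $\prod_{U_{i_0\cdots i_p}\neq\emptyset} R$, the identification sending a cochain to its tuple of values, and invoke the fact that an arbitrary product of abelian groups is abelian.
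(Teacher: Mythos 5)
Your proof is correct; the paper states this as a Fact without proof, and your componentwise verification (equivalently, the identification with the product group $\prod_{U_{i_0\cdots i_p}\neq\emptyset} R$) is exactly the intended routine argument.
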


To get a cohomology theory, we now need a differential.

\begin{defs}
Let $\mu\in \check{C}^p(\msU,R)$. Define
\bge
(\delta\mu)_{i_0\cdots i_{p+1}}:=\sum_{j=0}^{p+1}(-1)^j\mu_{i_0\cdots i_{j-1}i_{j+1}\cdots i_p}.
\ene
\end{defs}

In order $\delta$ to make sense as a differential, we need it to be an abelian group homomorphism and it needs to square to 0. Thankfully, as the following fact shows, this is the case.

\begin{fact}
\begin{itemize}
    \item[(i)] $\delta:\check{C}^p(\msU,R)\ra \check{C}^{p+1}(\msU,R)$ is an abelian group homomorphism.
    \item[(ii)] $\delta^2=0$.
\end{itemize}
\end{fact}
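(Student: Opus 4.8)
The plan is to verify both claims directly from the definition of $\delta$, with essentially all the real work going into (ii). For part (i), the only things to check are that $\delta\mu$ is genuinely a $(p+1)$-cochain and that $\delta$ respects addition. For well-definedness, observe that each summand $\mu_{i_0\cdots\widehat{i_j}\cdots i_{p+1}}$ is a constant function on $U_{i_0\cdots\widehat{i_j}\cdots i_{p+1}}$, which contains $U_{i_0\cdots i_{p+1}}$; restricting each summand to $U_{i_0\cdots i_{p+1}}$ and adding, we obtain a constant function on $U_{i_0\cdots i_{p+1}}$, and there is nothing to specify when that intersection is empty. Additivity, $\delta(\mu+\nu)=\delta\mu+\delta\nu$, is immediate because the defining formula is $R$-linear in the components of $\mu$; in particular $\delta$ is a homomorphism of abelian groups.

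For part (ii) I would compute $(\delta^2\mu)_{i_0\cdots i_{p+2}}$ by applying the formula twice. Writing $\nu=\delta\mu$, we have $(\delta\nu)_{i_0\cdots i_{p+2}}=\sum_{k=0}^{p+2}(-1)^k\,\nu_{i_0\cdots\widehat{i_k}\cdots i_{p+2}}$, and each $\nu_{i_0\cdots\widehat{i_k}\cdots i_{p+2}}$ is then expanded by the definition of $\delta\mu$ on the $(p+2)$-tuple obtained by deleting $i_k$. The essential bookkeeping is to record the sign attached to deleting a second index relative to the position of $i_k$: deleting an index $i_m$ with $m<k$ contributes sign $(-1)^m$, while deleting $i_\ell$ with $\ell>k$ sits in position $\ell-1$ of the shortened tuple and contributes sign $(-1)^{\ell-1}$. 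After the reindexing $\ell\mapsto\ell$ this expresses $(\delta^2\mu)_{i_0\cdots i_{p+2}}$ as a sum over pairs $(m,k)$ with $m<k$ carrying sign $(-1)^{m+k}$, plus a sum over pairs $(k,\ell)$ with $k<\ell$ carrying sign $-(-1)^{k+\ell}$. Since deleting two fixed indices $i_a,i_b$ from the tuple produces the same component of $\mu$ irrespective of which deletion is performed first, the term indexed by the pair $\{a,b\}$ occurs exactly once in each of the two sums with opposite signs; everything cancels, so $\delta^2\mu=0$.

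I expect part (ii) to be the only genuine obstacle, and within it the sign-tracking — correctly handling the off-by-one shift in the exponent when the second deleted index lies to the right of the first — is the single place demanding care. The remaining points (constancy of the cochains, compatibility with restriction, additivity) are routine, so in the write-up I would dispose of part (i) in a couple of sentences and devote the bulk of the argument to displaying the expanded double sum for $\delta^2\mu$ and the involution on index pairs that annihilates it.
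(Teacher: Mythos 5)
Your proof is correct and complete; the sign bookkeeping in part (ii) — the extra factor $(-1)^{-1}$ arising because the second deleted index shifts to position $\ell-1$ when it lies to the right of the first — is exactly the crux, and you handle it properly, so each unordered pair of deleted indices contributes two cancelling terms. The paper states this as a Fact with no proof at all, so there is no argument to compare against; yours is the standard simplicial-style computation and fills that gap as one would expect.
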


\begin{defs}
Define the \v{C}ech $p$-cocycles and $p$-coboundaries, respectively, by
\begin{align*}
\check{Z}^p(\msU,R)&:=\{\mu\in \check{C}^p(\msU,R) \ | \ \delta\mu=0\}.\\
\check{B}^p(\msU,R)&:=\{\mu\in \check{C}^p(\msU,R) \ | \ \exists \ \eta\in C^{p-1}(\msU,R) \ : \ \mu=\delta\eta\}.
\end{align*}
Define the $p$-th \v{C}ech cohomology group of $X$ with respect to $\msU$ with coefficients in $R$ by
\bge
\check{H}^p(\msU,R)=\check{Z}^p(\msU,R)/\check{B}^p(\msU,R).
\ene
\end{defs}

It might seem like the cohomology will depend on the choice of an open cover. Thankfully, in the case of a smooth manifold and coefficients in $\bR$, this is not the case.

\begin{thm}
If $M$ is a manifold and $\msU$ is a good cover of $M$, then for all $p$
\bge
\check{H}^p(\msU;\bR)\cong H^p(M).
\ene
\end{thm}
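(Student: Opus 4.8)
The statement to prove is the isomorphism $\check{H}^p(\msU;\bR)\cong H^p(M)$ for a good cover $\msU$ of a manifold $M$. This is the classical theorem that Čech cohomology with respect to a good cover computes de Rham cohomology. Let me sketch the standard proof, which goes via a double complex (the Čech–de Rham complex).

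The plan is to prove this via the Čech–de Rham double complex and a Mayer–Vietoris / Poincaré-lemma style argument. First I would set up the double complex $C^{p,q} = \check{C}^p(\msU; \Omega^q)$, where $\Omega^q$ denotes the sheaf of smooth $q$-forms, with horizontal differential the Čech coboundary $\delta$ and vertical differential (a sign-adjusted) exterior derivative $d$. There are two augmentation maps into this complex: the de Rham complex $\Omega^\bullet(M)$ maps in along the bottom row via restriction $r\colon \Omega^q(M) \to \check{C}^0(\msU;\Omega^q)$, and the Čech complex $\check{C}^\bullet(\msU;\bR)$ maps in along the left column via the inclusion of locally constant functions into smooth functions.

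The key steps are then the two exactness lemmas. First, I would show that each row $\check{C}^\bullet(\msU;\Omega^q)$ is exact in positive Čech degree with $H^0$ equal to $\Omega^q(M)$; this is the standard "generalized Mayer–Vietoris" argument using a partition of unity subordinate to $\msU$ (available since $M$ is paracompact, by Proposition on partitions of unity above), which provides an explicit contracting homotopy for $\delta$. Second, I would show that each column $\check{C}^p(\msU;\Omega^\bullet)$ is exact in positive form-degree with $H^0$ the locally constant functions on the (non-empty) intersections; this is where the good-cover hypothesis is essential, since every finite intersection $U_{i_0\cdots i_p}$ is contractible (simply connected and connected — one needs the stronger contractibility, which is what "good cover" is meant to supply in this context), so the Poincaré lemma applies and gives exactness, with the kernel in degree $0$ being the constants. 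A standard diagram chase ("zig-zag" argument) on the double complex, or equivalently the comparison of the two spectral sequences which both degenerate, then yields $H^p(M) \cong H^p(\mathrm{Tot}(C^{\bullet,\bullet})) \cong \check{H}^p(\msU;\bR)$.

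The main obstacle — or rather the point requiring the most care — is the zig-zag argument establishing that both augmentations are quasi-isomorphisms onto the total complex: one must carefully track how a closed global $q$-form, after restriction, is "spread out" through the double complex via alternating applications of $d^{-1}$ (Poincaré lemma) and $\delta^{-1}$ (partition-of-unity homotopy) until it lands on the left edge as a Čech cocycle, and check that this is well-defined on cohomology and inverse to the map in the other direction. The other subtlety worth flagging is that the definition of good cover given in the paper only requires intersections to be connected and simply connected, whereas the Poincaré lemma needs contractibility; I would either invoke that for manifolds such covers can be taken with contractible (even geodesically convex) intersections, or note that the cited reference \cite{bott} handles this. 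Since the paper defers details to \cite{jean} and \cite{bott}, it would be legitimate here to give the double-complex setup and the statements of the two exactness lemmas, indicate the zig-zag, and refer to \cite{bott} for the full verification.
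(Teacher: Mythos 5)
Your proposal is correct and is precisely the Čech--de Rham double-complex argument from Bott and Tu, which is exactly what the paper invokes (its ``proof'' is just the citation to \cite{bott}). Your observation that the Poincaré lemma step needs contractible intersections rather than merely connected and simply connected ones is a legitimate catch about the paper's stated definition of a good cover, and your proposed fix (take geodesically convex neighbourhoods) is the standard one.
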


\begin{proof}
See Bott and Tu \cite{bott}
\end{proof}

To finish off this section, we define what it means that a closed 2-form is integral.

\begin{fact}
Let $\msU$ be an open cover of $X$, let $R$ and $S$ two rings, and $\varphi:R\ra S$ an abelian group homorphism. Then there is an induced group homomorphism
\bge 
\check{H}^p(\msU,R)\ra \check{H}^p(\msU,S)
\ene
for all $p$.
\end{fact}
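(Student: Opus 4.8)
The statement is a routine functoriality fact: post-composition with $\varphi$ defines a cochain map, and cochain maps descend to cohomology. The plan is to build the induced map first at the level of cochains, then check it is a chain map, and finally pass to the quotient.

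First I would define, for $\mu\in\check{C}^p(\msU,R)$ — a family of constant functions $\mu_{i_0\cdots i_p}:U_{i_0\cdots i_p}\ra R$ — the cochain $\varphi_*\mu$ with components $(\varphi_*\mu)_{i_0\cdots i_p}:=\varphi\circ\mu_{i_0\cdots i_p}$. Since $\mu_{i_0\cdots i_p}$ is constant, so is $\varphi\circ\mu_{i_0\cdots i_p}$, hence $\varphi_*\mu\in\check{C}^p(\msU,S)$; evaluating at a point together with additivity of $\varphi$ shows that $\varphi_*\colon\check{C}^p(\msU,R)\ra\check{C}^p(\msU,S)$ is a homomorphism of abelian groups. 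Next I would verify that $\delta\circ\varphi_*=\varphi_*\circ\delta$: the \v{C}ech differential is an alternating sum of restrictions of the component functions of a cochain, restriction commutes with post-composition by $\varphi$, and additivity of $\varphi$ (which forces $\varphi(-a)=-\varphi(a)$ and $\varphi(\sum_j a_j)=\sum_j\varphi(a_j)$) lets $\varphi$ pass through the alternating sum. Symbolically, on $U_{i_0\cdots i_{p+1}}$,
\bge
(\delta(\varphi_*\mu))_{i_0\cdots i_{p+1}}=\sum_{j}(-1)^j\,\varphi\circ\mu_{i_0\cdots\widehat{i_j}\cdots i_{p+1}}=\varphi\circ(\delta\mu)_{i_0\cdots i_{p+1}},
\ene
where $\widehat{i_j}$ indicates that index is omitted.

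Finally, since $\varphi_*$ is a cochain map it carries $\check{Z}^p(\msU,R)$ into $\check{Z}^p(\msU,S)$ (because $\delta\mu=0$ gives $\delta(\varphi_*\mu)=\varphi_*(\delta\mu)=0$) and $\check{B}^p(\msU,R)$ into $\check{B}^p(\msU,S)$ (because $\mu=\delta\eta$ gives $\varphi_*\mu=\delta(\varphi_*\eta)$), so it descends to a well-defined group homomorphism $[\mu]\mapsto[\varphi_*\mu]$ from $\check{H}^p(\msU,R)$ to $\check{H}^p(\msU,S)$ for every $p$. I do not anticipate any genuine obstacle here; the only point worth flagging — if one wants to be careful — is that the entire argument uses solely the additive group structure of $R$ and $S$ together with additivity of $\varphi$, which is exactly why $\varphi$ is required merely to be an abelian group homomorphism rather than a ring homomorphism, and correspondingly why the induced map is, in general, only a homomorphism of abelian groups.
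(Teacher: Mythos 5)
Your proposal is correct and follows essentially the same route as the paper: define the induced map on cochains by post-composition with $\varphi$, check it is an abelian group homomorphism, verify it commutes with the \v{C}ech differential, and conclude that it descends to cohomology. You are in fact slightly more complete than the paper, which stops after verifying $\delta\varphi=\varphi\delta$ and leaves the descent to cocycles, coboundaries, and the quotient implicit.
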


\begin{proof}
Fix $p$ and let $\mu\in \check{C}^p(\msU,R)$. Define $\varphi(\mu)$ by
\bge
\varphi(\mu)_{i_0\cdots i_p}:U_{i_0\cdots i_p}\ra S;\quad x\mapsto \varphi(\mu_{i_0\cdots i_p}(x)).
\ene
This clearly induces a map of abelian groups
\bge
\varphi:\check{C}^p(\msU,R)\ra \check{C}^p(\msU,S).
\ene

To show that this induces a homomorphism at the level of cohomology, we show $\delta\varphi=\varphi\delta$. To do this, let $\mu\in \check{C}^p(\msU,R)$. Then we have
\begin{align*}
(\varphi \delta \mu)_{i_0\cdots i_{p+1}}&=\varphi\bigg(\sum_{j=0}^{p+1}(-1)^j\mu_{i_0\cdots i_{j-1}i_{j+1}\cdots i_p}\bigg)\\
&=\sum_{j=0}^{p+1}(-1)^j\varphi(\mu)_{i_0\cdots i_{j-1}i_{j+1}\cdots i_p}\\
&=\delta \varphi(\mu).
\end{align*}
\end{proof}

\begin{defs}
Let $\msU$ be a good cover of the manifold $M$ and $\gamma_\msU:\check{H}^2(\msU;\bZ)\ra H^2(M)$ be the composition of the maps
\bge
\check{H}^2(\msU;\bZ)\ra \check{H}^2(\msU;\bR)\ra \check{H}^2(M).
\ene
Then we say $[\omega]\in H^2(M)$ is integral if it lies the in the image of $\gamma_\msU$.
\end{defs}

This definition depended on the good cover chosen. However, as a corollary to the above theorem, we can see that it does not.

\begin{cor}
If $\msU$ and $\mathscr{V}$ are two good covers of $M$, then $Im(\gamma_\msU)=Im(\gamma_{\mathscr{V}})$.
\end{cor}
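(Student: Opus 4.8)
The goal is to show that the subgroup $\mathrm{Im}(\gamma_\msU) \subset H^2(M)$ is independent of the choice of good cover $\msU$. The natural strategy is a \emph{common refinement} argument: given two good covers $\msU$ and $\mathscr{V}$, it suffices to compare each of them to a good cover $\mathscr{W}$ that refines both (such a $\mathscr{W}$ exists by Proposition (refinement to a good cover), applied to the common refinement $\{U_i \cap V_j\}$, which is an open cover). By symmetry, the whole claim reduces to the following assertion: if $\mathscr{W}$ refines the good cover $\msU$, then $\mathrm{Im}(\gamma_\msU) = \mathrm{Im}(\gamma_{\mathscr{W}})$.

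The key point is naturality of the refinement maps. A choice of refinement function $\tau$ (sending each member of $\mathscr{W}$ into a member of $\msU$ containing it) induces restriction maps on \v{C}ech cochains $\rho^R : \check{C}^p(\msU;R) \to \check{C}^p(\mathscr{W};R)$ for every ring $R$, and these are chain maps, so they descend to $\rho^R_* : \check{H}^p(\msU;R) \to \check{H}^p(\mathscr{W};R)$. Because $\rho$ is built by precomposing with inclusions and is visibly compatible with the coefficient homomorphism $\bZ \hookrightarrow \bR$ (it does the same thing index-by-index regardless of $R$) and with the \v{C}ech--de Rham comparison map to $H^2(M)$, the square
\begin{equation*}
\begin{tikzcd}
\check{H}^2(\msU;\bZ) \arrow[r,"\gamma_\msU"] \arrow[d,"\rho^\bZ_*"'] & H^2(M) \arrow[d,equal]\\
\check{H}^2(\mathscr{W};\bZ) \arrow[r,"\gamma_{\mathscr{W}}"'] & H^2(M)
\end{tikzcd}
\end{equation*}
commutes. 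Moreover, Theorem (\v{C}ech $\cong$ de Rham) tells us that the refinement map $\rho^\bR_* : \check{H}^2(\msU;\bR) \to \check{H}^2(\mathscr{W};\bR)$ is an isomorphism, since both groups are canonically identified with $H^2(M)$ and the identification is compatible with refinement. It follows immediately that $\mathrm{Im}(\gamma_\msU) \subseteq \mathrm{Im}(\gamma_{\mathscr{W}})$ from the commuting square. For the reverse inclusion one needs that $\rho^\bZ_*$ is surjective, or at least that its image generates enough; the cleanest route is to observe that the \v{C}ech cohomology with \emph{any} coefficient ring is independent of the good cover up to the refinement isomorphism (this is part of the same package cited to Bott and Tu), so $\rho^\bZ_*$ is itself an isomorphism, and then the two images coincide exactly.

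The main obstacle is making precise that $\rho_*$ does not depend on the choice of refinement function $\tau$ — two different choices give chain-homotopic restriction maps, hence the same map on cohomology — and that the \v{C}ech--de Rham isomorphism of the cited theorem is genuinely natural with respect to refinement, not merely an abstract isomorphism for each fixed cover. Rather than reprove these standard facts, I would invoke them from Bott and Tu \cite{bott}, where the \v{C}ech--de Rham double complex is set up precisely so that refinement induces the identity on $H^2(M)$. With that in hand, the argument is the short diagram chase sketched above: commuting square plus $\rho^\bZ_*$ an isomorphism gives $\mathrm{Im}(\gamma_\msU) = \mathrm{Im}(\gamma_{\mathscr{W}})$, and then the common-refinement step finishes the proof.
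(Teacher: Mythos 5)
Your proposal is correct and is essentially the paper's approach: the paper's own ``proof'' of this corollary consists entirely of the citation to Bott and Tu, and your common-refinement argument --- the commuting square relating $\gamma_\msU$ and $\gamma_{\mathscr{W}}$ together with the fact that refinement maps between good covers induce isomorphisms on \v{C}ech cohomology with both $\bZ$ and $\bR$ coefficients --- is precisely the standard argument that citation points to. Like the paper, you ultimately defer the genuine technical content (well-definedness of $\rho_*$ up to chain homotopy and naturality of the \v{C}ech--de Rham isomorphism under refinement) to \cite{bott}, so this is a faithful elaboration rather than an independent proof, which is all the paper itself provides.
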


\begin{proof}
See Bott and Tu \cite{bott}
\end{proof}

\begin{rem}
Let $\omega\in\Omega^2(M)$ be a closed 2-form and $\{U_i\}_{i\in I}$ a good cover of $M$. We can construct a \v{C}ech 2-cocycle from $\omega$ as follows.

Since $\omega$ is closed and since $U_i$ is simply connected, there exists $\alpha_i\in\Omega^1(U_i)$ such that on $U_i$
\bge
d\alpha_i=\omega.
\ene
It's easy to see then that if $U_i\cap U_j\neq\emptyset$, then $d(\alpha_i-\alpha_j)=0$. Hence, there exists $f\in C^\infty(U_i\cap U_j)$ such that
\bge
\alpha_i-\alpha_j=d f_{ij}.
\ene
Doing a similar computation on $U_{ijk}$ can conclude
\bge
d(-f_{ij}+f_{ik}-f_{jk})=0
\ene
And hence there exists a constant function $\mu_{ijk}$ on $U_i\cap U_j\cap U_k$ such that
\bge
-f_{ij}+f_{ik}-f_{jk}=\mu_{ijk}.
\ene

It's an easy compuation to see that $\mu_{ijk}$ is indeed a \v{C}ech 2-cocycle. 
\end{rem}

\begin{defs}
Call the $\mu_{ijk}$ constructed in the above remark the \v{C}ech cocycle of $[\omega]$ with respect to the cover $\{U_i\}_{i\in I}$.
\end{defs}

\begin{thm}
$[\omega]$ is integral $\iff$ the \v{C}ech cocycles of $[\omega]$, $\mu_{ijk}$, are constant integer functions for all $i,j,k$ and any choice of good cover.
\end{thm}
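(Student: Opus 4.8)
The strategy is to recognize that the passage from $\omega$ to the cochain $\mu=(\mu_{ijk})$ carried out in the Remark is precisely the zig-zag realizing the \v{C}ech--de Rham isomorphism $\check{H}^2(\msU;\bR)\cong H^2(M)$ of the cited theorem. Once that is in place, integrality of $[\omega]$ becomes, almost by definition, the assertion that the class $[\mu]\in\check{H}^2(\msU;\bR)$ lies in the image of $\check{H}^2(\msU;\bZ)\ra\check{H}^2(\msU;\bR)$, and it remains only to translate this cohomological condition into a statement about the cocycle $\mu$ itself. I read the theorem as saying that $[\omega]$ is integral iff, for one (equivalently every) good cover, the primitives $\alpha_i$ and the functions $f_{ij}$ of the Remark can be chosen so that all the $\mu_{ijk}$ are constant integers; the independence Corollary lets us pass between covers freely.

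First I would record two preliminary facts. (a) The class $[\mu]$ is independent of the choices in the Remark: replacing $\alpha_i$ by $\alpha_i+dh_i$ forces $f_{ij}$ to become $f_{ij}+(h_i-h_j)+c_{ij}$ for some constants $c_{ij}$, and since $-(h_i-h_j)+(h_i-h_k)-(h_j-h_k)=0$, the cochain $\mu$ changes only by the \v{C}ech coboundary $-\delta c$; altering just the additive constants in the $f_{ij}$ likewise changes $\mu$ by a coboundary. Read backwards, this says that adding arbitrary constants $c_{ij}$ to the $f_{ij}$ realizes every coboundary $\delta c$ as a modification of $\mu$ --- a freedom I exploit below. (b) By the \v{C}ech--de Rham theorem (\cite{bott}) in its explicit form, the isomorphism $\check{H}^2(\msU;\bR)\cong H^2(M)$ is given by exactly the zig-zag of the Remark, so under it $[\omega]\in H^2(M)$ corresponds to $[\mu]\in\check{H}^2(\msU;\bR)$ (up to an overall sign, which is irrelevant to integrality).

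For $(\Leftarrow)$: if the $f_{ij}$ can be chosen so that every $\mu_{ijk}$ is a constant integer, then $\mu\in\check{Z}^2(\msU;\bZ)$ --- it is a cocycle over $\bR$ by the Remark, and $\delta$ commutes with $\bZ\into\bR$ by the Fact preceding Definition~19 --- so its image under $\check{H}^2(\msU;\bZ)\ra\check{H}^2(\msU;\bR)$ is $[\mu]$, and transporting through the isomorphism of (b) gives $[\omega]$; hence $[\omega]\in\mathrm{Im}(\gamma_\msU)$ is integral. For $(\Rightarrow)$: if $[\omega]$ is integral, pick any good cover $\msU$ and $c\in\check{H}^2(\msU;\bZ)$ with $\gamma_\msU(c)=[\omega]$, and let $\nu$ be an integer cocycle representing $c$. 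Unwinding the definition of $\gamma_\msU$ together with the identification of (b) shows $[\mu]=[\nu]$ in $\check{H}^2(\msU;\bR)$, i.e. $\mu-\nu=\delta\eta$ for some $\eta\in\check{C}^1(\msU;\bR)$. Replacing each $f_{ij}$ by $f_{ij}-\eta_{ij}$ is a legal change that replaces $\mu$ by $\mu-\delta\eta=\nu$, a constant integer cochain; so for this cover the \v{C}ech cocycles of $[\omega]$ can be taken to be constant integers, and the Corollary on independence of the good cover extends the conclusion to every good cover.

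The main obstacle is fact (b): recognizing the Remark's hands-on construction as the abstract \v{C}ech--de Rham isomorphism. I would treat the explicit form of that isomorphism as part of the cited material in \cite{bott} rather than reprove the \v{C}ech--de Rham theorem; everything else is bookkeeping with cocycles and coboundaries. A minor subtlety worth flagging in the write-up is the quantifier on ``any choice of good cover'': since the $\alpha_i$ are only determined up to $dh_i$ and the $f_{ij}$ only up to constants, the cocycle $\mu$ is not canonical, so the honest statement is about the existence of integer-valued representatives, which is exactly what the argument above produces.
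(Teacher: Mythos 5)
Your proposal is correct, but note that the paper itself does not prove this theorem at all --- its ``proof'' is the single line ``See Kostant \cite{kostant}'' --- so you have supplied an argument where the paper supplies a citation. What you give is the standard one: identify the Remark's zig-zag $\omega\mapsto\alpha_i\mapsto f_{ij}\mapsto\mu_{ijk}$ with the explicit \v{C}ech--de~Rham isomorphism of Bott--Tu, check that $[\mu]$ is well defined modulo coboundaries under all the choices involved, and then observe that integrality of $[\omega]$ is literally the statement that $[\mu]$ lifts to $\check{H}^2(\msU;\bZ)$, which by your coboundary-absorption step is the same as being able to choose the $f_{ij}$ so that every $\mu_{ijk}$ is a constant integer. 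The load-bearing step is exactly the one you flag: that the second map in the composition defining $\gamma_\msU$ is realized by the Remark's construction; this is genuinely a citation to the explicit form of the \v{C}ech--de~Rham theorem and is fair to treat as such, since the paper already cites \cite{bott} for the isomorphism itself. Two small points for the write-up. First, with the paper's convention $(\delta\eta)_{ijk}=\eta_{jk}-\eta_{ik}+\eta_{ij}$ and $\mu_{ijk}=-f_{ij}+f_{ik}-f_{jk}$, replacing $f_{ij}$ by $f_{ij}-\eta_{ij}$ changes $\mu$ to $\mu+\delta\eta$, not $\mu-\delta\eta$; you want $f_{ij}+\eta_{ij}$ (or replace $\eta$ by $-\eta$) --- harmless, but worth getting the sign right. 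Second, your reading of the theorem as an existential statement about representatives is the right one and is worth stating explicitly, since the literal ``for any choice'' reading is false (adding an irrational constant to a single $f_{ij}$ destroys integrality of $\mu$ without changing $[\omega]$).
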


\begin{proof}
See Kostant \cite{kostant}
\end{proof}

\subsection{Main Theorem}
At last, we have built up enough theory to prove the main result. The proof here is the one that can be found in Kostant's original paper \cite{kostant}.

\begin{thm}
Let $M$ be a smooth manifold and $\omega\in \Omega^2(M)$ a closed 2-form. Then there exists a Hermitian line bundle with compatible covariant derivative $(L,\bra,\ket,\nabla)$ over $M$ with curvature $2\pi i\omega$ $\iff$ $[\omega]$ is integral.
\end{thm}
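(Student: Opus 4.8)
The plan is to prove both directions using the \v{C}ech cocycle description of integrality established in Theorem~13, and the reconstruction machinery for line bundles and covariant derivatives from Section~3. The backbone of the argument is that a Hermitian line bundle with compatible covariant derivative of curvature $2\pi i\omega$ exists precisely when one can choose a good cover, local potentials $\alpha_i$ for $\omega$, and transition functions $g_{ij}$ that simultaneously satisfy the cocycle condition (4) and the compatibility relation $\alpha_i-\alpha_j=\frac{1}{2\pi i}\frac{dg_{ij}}{g_{ij}}$; and this last bundle of conditions is exactly what integrality of $[\omega]$ buys us.

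For the forward direction ($[\omega]$ integral $\Rightarrow$ bundle exists), I would start with a good cover $\{U_i\}$ of $M$ and, since each $U_i$ is simply connected and $\omega$ is closed, pick $\alpha_i\in\Omega^1(U_i)$ with $d\alpha_i=\omega$ and then $f_{ij}\in C^\infty(U_{ij})$ with $\alpha_i-\alpha_j=df_{ij}$, exactly as in the Remark preceding Definition~24. Integrality says, via Theorem~13, that the \v{C}ech cocycle $\mu_{ijk}=-f_{ij}+f_{ik}-f_{jk}$ can be taken to be integer-valued. After rescaling set $\theta_{ij}:=2\pi i f_{ij}$ and define $g_{ij}:=\exp(\theta_{ij}):U_{ij}\to\bC^*$. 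The integrality of $\mu_{ijk}$ gives $\theta_{ik}-\theta_{ij}-\theta_{jk}\in 2\pi i\bZ$, hence $g_{ik}=g_{ij}g_{jk}$, i.e. the cocycle condition (4). By Proposition~11 these transition functions assemble into a line bundle $L\to M$ with local system $\{(U_i,s_i)\}$. Moreover $\frac{1}{2\pi i}\frac{dg_{ij}}{g_{ij}}=df_{ij}=\alpha_i-\alpha_j$, so Theorem~14 produces a covariant derivative $\nabla$ with $s_i^*\varphi=\alpha_i$ and curvature $R=2\pi i\,d\alpha_i=2\pi i\omega$ on each $U_i$, hence globally. Finally I would equip $L$ with a Hermitian structure compatible with $\nabla$: declare $\langle s_i,s_i\rangle=1$ on $U_i$; for consistency on overlaps one needs $|g_{ij}|=1$, which one arranges by replacing each $\alpha_i$ by $\alpha_i-\tfrac{1}{4\pi i}(\beta_i-\bar\beta_i)$-type corrections, or more cleanly by first noting $[\omega]$ is real so the $\alpha_i$ and $f_{ij}$ may be chosen real-valued, whence $g_{ij}=e^{2\pi i f_{ij}}$ already takes values in the unit circle $U(1)\subset\bC^*$. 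With real $f_{ij}$ the Hermitian metric glues, and compatibility with $\nabla$ is the local computation $\xi\langle s_i,s_i\rangle=0=2\pi i(\alpha_i-\bar\alpha_i)(\xi)\langle s_i,s_i\rangle$, using that $\alpha_i$ is real.

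For the converse ($(L,\langle,\rangle,\nabla)$ of curvature $2\pi i\omega$ $\Rightarrow$ $[\omega]$ integral), I would run the same construction backwards. Choose a local system $\{(U_i,s_i)\}$ for $L$ over a good cover (Propositions~9 and~10), with transition functions $g_{ij}$; by using parallel sections or simply normalizing, one may in fact take the $s_i$ to be unit-norm, so that $|g_{ij}|=1$ and $g_{ij}=e^{2\pi i f_{ij}}$ for real $f_{ij}\in C^\infty(U_{ij})$ (possible since each $U_{ij}$ is simply connected). Set $\alpha_i:=s_i^*\varphi$ where $\varphi$ is the connection $1$-form of $\nabla$ from Theorem~12; Corollary~5 gives $2\pi i\,d\alpha_i=R|_{U_i}=2\pi i\omega$, so $d\alpha_i=\omega$, i.e. the $\alpha_i$ are local potentials for $\omega$. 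The standard relation between transition functions and pullbacks of $\varphi$ (differentiate $s_i=g_{ij}s_j$ and apply the connection, exactly as in the displayed computation in the proof of Theorem~14) yields $\alpha_i-\alpha_j=\frac{1}{2\pi i}\frac{dg_{ij}}{g_{ij}}=df_{ij}$. Hence $-f_{ij}+f_{ik}-f_{jk}$ is locally constant, so it is the \v{C}ech cocycle $\mu_{ijk}$ of $[\omega]$ for this good cover; and the cocycle condition $g_{ik}=g_{ij}g_{jk}$ forces $e^{2\pi i(f_{ij}+f_{jk}-f_{ik})}=1$, i.e. $-f_{ij}+f_{ik}-f_{jk}\in\bZ$. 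Thus $\mu_{ijk}$ is integer-valued, and by Theorem~13, $[\omega]$ is integral.

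I expect the main obstacle to be the Hermitian part of the forward direction: everything in Section~3 was developed for bare line bundles with connections, and gluing a metric requires the transition functions to land in $U(1)$, not just $\bC^*$. The clean resolution is to exploit that $\omega$ is a real $2$-form, so the potentials $\alpha_i$ and hence the $f_{ij}$ can be chosen real; then $g_{ij}=e^{2\pi i f_{ij}}\in U(1)$ automatically, the fibrewise Hermitian form defined by $\langle s_i,s_i\rangle\equiv1$ is well-defined globally, and compatibility with $\nabla$ is immediate from $\alpha_i$ being purely imaginary-free (real). A secondary point to handle carefully is independence of the choice of good cover, but this is exactly Corollary~3 together with Theorem~13's ``any choice of good cover'' clause, so no extra work is needed.
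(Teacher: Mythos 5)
Your proposal is correct and follows essentially the same route as the paper: both directions run through the \v{C}ech cocycle characterization of integrality, the $\alpha_i=s_i^*\varphi$ local potentials with $\alpha_i-\alpha_j=\frac{1}{2\pi i}\frac{dg_{ij}}{g_{ij}}$, the reconstruction of $(L,\nabla)$ from transition functions $g_{ij}=e^{2\pi i f_{ij}}$, and the unit-norm local sections giving $|g_{ij}|=1$ for the Hermitian structure. Your explicit observation that the reality of $\omega$ lets one choose the $\alpha_i$ and $f_{ij}$ real, so that $g_{ij}$ lands in $U(1)$ and the metric glues, is a point the paper leaves implicit, but it is the same argument.
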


\begin{proof}
\begin{itemize}
    \item[$\Rightarrow$:] Suppose $(L,\bra,\ket,\nabla)$ is Hermitian line bundle with compatible covariant derivative over $M$ with $2\pi i\omega$ the curvature.
    
    If $\varphi$ is the corresponding connection 1-form, we have $d\varphi=\pi^*\omega$, where $\pi:L\ra M$ is the line bundle projection.
    
    Let $\{(U_i,s_i)\}_{i\in I}$ be a local system, $g_{ij}$ the transition functions. Since we have a Hermitian structure, may pick the $s_i$ so that $\bra s_i,s_i\ket=1$ and hence $|g_{ij}|=1$. 
    
    Define $\alpha_i=s_i^*\varphi$. Then since $\pi\circ s_i=\id_{U_i}$, we have on $U_i$
    \bge
    d\alpha_i=d(s_i^*\varphi)=s_i^*d\varphi=s_i^*\pi^*\omega=\omega.
    \ene
    Furthermore, since $s_i=g_{ij}s_j$ and since for all local vector fields $\xi$ we have
    \bge
    \nabla_\xi s_i=2\pi i\alpha_i(\xi)s_i,\quad \nabla_\xi s_j=2\pi i\alpha_j(\xi)s_j,
    \ene
    one can easily show that
    \bge
    \alpha_i-\alpha_j=\frac{1}{2\pi i}\frac{dg_{ij}}{g_{ij}}.
    \ene

    Now, recall that if $U_i\cap U_j\cap U_k\neq\emptyset$, then $g_{ik}=g_{ij}g_{jk}$ on $U_i\cap U_j\cap U_k$. Furthermore, by construction we have $|g_{ij}|=1$. Thus, choosing a branch of logarithm $\log(g_{ij})$  and defining
    \bge
    f_{ij}=\frac{1}{2\pi i}\log(g_{ij}),
    \ene
    we obtain there is some integer valued $\mu_{ijk}:U_i\cap U_j\cap U_k\ra\bZ$ such that
    \bge
    -f_{ij}+f_{ik}-f_{jk}=\mu_{ijk}.
    \ene
    Since $\mu_{ijk}$ is continuous and $U_i\cap U_j\cap U_k$ is connected, we must have that $\mu_{ijk}$ is constant.
    
    Furthermore, since we now have
    \bge
    \alpha_i-\alpha_j=d f_{ij}
    \ene
    we conclude that $\mu_{ijk}$ is a Cech cocycle of $[\omega]$ with respect to the cover $\{U_i\}_{i\in I}$. Since the $\mu_{ijk}$ are constant integer functions, we conclude by Theorem 7 that $[\omega]$ is integral.
    
    \item[$\Leftarrow$:] Suppose $[\omega]$ is integral. Let $\{U_i\}_{i\in I}$ be a good cover. On $U_i$ there exists $\alpha_i\in\Omega^1(U_i)$ such that
    \bge
    \omega=d\alpha_i.
    \ene
    On $U_i\cap U_j$, there exists $f_{ij}\in C^\infty(U_i\cap U_j)$ such that
    \bge
    \alpha_i-\alpha_j=df_{ij}.
    \ene
    Define $g_{ij}=e^{2\pi i f_{ij}}$. Then the $g_{ij}$ satisfy the cocycle condition in equation (4) and
    \bge
    \alpha_i-\alpha_j=\frac{1}{2\pi i}\frac{dg_{ij}}{g_{ij}}.
    \ene
    Hence by Theorem 5, there exists a line bundle with covariant derivative $(L,\nabla)$ over $M$ such that the $g_{ij}$ are the transition functions of the local system $\{(U_i,s_i)\}_{i\in I}$. Furthermore, if $\varphi$ is the corresponding connection 1-form on $L^+$, we have
    \bge
    \alpha_i=s_i^*\varphi
    \ene
    Note that $s_i^* d\varphi=d\alpha_i=\omega$. Hence, $2\pi i\omega$ is the curvature form for $\nabla$.
    
    All that's left is to demonstrate a Hermitian form that is compatible with $\nabla$. To do this, let $i\in I$ and $z,w\in U_i$. Define
    \bge
    \bra z,w\ket_i:=\frac{z}{s_i}\overline{\frac{w}{s_i}}.
    \ene
    We show the $\bra,\ket_i$ agree on overlaps and hence define a Hermitian form on $L$. 
    
    Since $|g_{ij}|=1$, we have if $z,w\in \pi^{-1}(U_i\cap U_j)$
    \bge
    \bra z,w\ket_i:=\frac{z}{s_i}\overline{\frac{w}{s_i}}=\frac{1}{|g_{ij}|^2}\frac{z}{s_j}\overline{\frac{w}{s_j}}=\bra z,w\ket_j.
    \ene
    
    Hence the $\bra,\ket_i$ piece together to a globally defined Hermitian form $\bra,\ket$. We now just have to show that $\nabla$ and $\bra,\ket$ are compatible. Suffices to show for all local vector fields $\xi$
    \begin{equation}
    \bra \nabla_\xi s_i,s_i\ket+\bra s_i,\nabla_\xi s_i\ket=0.
    \end{equation}
    
    To see this, let $x\in U_i$ and let $r,t$ be local sections of $L$ in a neighbourhood of $x$. Then there exists smooth complex valued functions $F,G$ near $x$ such that
    \bge
    r=Fs_i\quad t=Gs_i.
    \ene
    Then given that the equation above is true, we have for any local vector field $\xi$ defined around $x$
    \bge
    \xi_x\bra r,t\ket=\xi_x(F\overline{G}\bra s_i,s_i\ket)=\xi_x(F\overline{G}).
    \ene
    On the otherhand,
    \begin{align*}
    \bra \nabla_\xi r,t\ket+\bra r,\nabla_\xi t\ket&=\bra (\xi F)s_i+F\nabla_\xi s_i,Gs_i\ket +\bra Fs_i, (\xi G)s_i+G\nabla_\xi s_i,t\ket\\
    &=(\xi F+\overline{\xi G})\bra s_i,s_i\ket+F\overline{G}(\bra \nabla_\xi s_i,s_i\ket+\bra s_i,\nabla_\xi s_i\ket)\\
    &=\xi(F\overline{G}).
    \end{align*}
    
    Now all that's left is to show equation (5) holds. Let $i\in I$, then we compute,
    \begin{align*}
    \bra \nabla_\xi s_i,s_i\ket&=\bra 2\pi i\alpha_i(\xi) s_i,s_i\ket=2\pi i\alpha_i(\xi).
    \end{align*}
    Hence, since $\alpha_i$ is real valued
    \bge
    \bra \nabla_\xi s_i,s_i\ket+\bra s_i,\nabla_\xi s_i\ket=2\pi i-2\pi i=0.
    \ene
    This completes the proof and this paper.
\end{itemize}
\end{proof}


\begin{thebibliography}{9}
\bibitem{kostant}
Bertram Konstant.
\textit{Quantization and Unitary Representations}.
 Lectures in Modern Analysis and Applications III. Lecture Notes in Mathematics, vol 170. Springer, Berlin, Heidelberg, 1970.
 
\bibitem{souriau}
J.-M. Souriau.
\textit{Structure of Dynamical Systems}.
Birkh\"{a}user, Boston, M.A., 1997.

\bibitem{Snia}
J\c edrzej Śniatycki.
\textit{Geometric Quantization and Quantum Mechanics}.
Springer New York, New York, N.Y., 1980.

\bibitem{lerman}
Eugene Lerman.
\textit{Geometric Quantization; A Crash Course}.
Mathematical Aspects of Quantization, Contemporary Mathematics, vol. 583, American Mathematical Society, Providence, R.I., 2012. 147-174.
 
\bibitem{sean} 
Sean Bates and Alan Weinstein. 
\textit{Lectures on the Geometry of Quantization}.
American Mathematical Society, Providence, R.I., 2000.

\bibitem{guil}
V. Guillemin and S. Sternberg.
\textit{Geometric Asymptotics}.
American Mathematical Society, Providence, R.I., 1991.

\bibitem{jean}
Jean-Luc Brylinski.
\textit{Loop Spaces, Characteristic Classes, and Geometric Quantization}.
Birkh\"{a}user, Boston, M.A., 2008.


\bibitem{greub} 
Werner Greub, Stephen Halperin, and Ray Vanstone. 
\textit{Connections, Curvature, and Cohomology Volume II}.
Academic Press., New York, N.Y., 1973.

\bibitem{koba}
Shochichi Kobayashi and Katsumi Nomizu.
\textit{Foundations of Differential Geometry}.
Vol. I. Interscience Publishers, New York, N.Y, 1963.

\bibitem{bott}
Raoul Bott and Loring W. Tu.
\textit{Differential Forms in Algebraic Topology}.
Springer-Verlag, New York, N.Y., 1982.

\bibitem{lee} 
John M. Lee. 
\textit{Introduction to Smooth Manifolds}.
Springer-Verlag, New York, N.Y., 2003.

\bibitem{ana} 
Ana Cannas da Silva. 
\textit{Lectures on Symplectic Geometry}. 
Springer, Berlin, 2008.

\bibitem{arnold}
V. I. Arnold.
\textit{Mathematical Methods of Classical Mechanics}.
Springer-Verlag, New York, N.Y., 1978.

\bibitem{marsden}
R. Abraham and J.E. Marsden.
\textit{Foundations of Mechanics}.
2nd edition, Benjamin/Cummings, Readings, 1978.

\bibitem{dirac}
P. A. M Dirac.
\textit{Principles of Quantum Mechanics}. Clarendon Press, Oxford, 1958.

\bibitem{folland}
Gerald B. Folland.
\textit{Quantum Field Theory: A Tourist Guide for Mathematicians}.
American Mathematical Society, Providence, R.I., 2008.

\bibitem{bran}
B. H. Bransden and C. J. Joachain.
\textit{Quantum Mechanics}.
Pearson, New Dehli, U.P., 2013.

\bibitem{allday}
Jonathan Allday.
\textit{Quantum Reality: Theory and Philosophy}.
CRC Press, Boca Raton, F.L., 2009.
\end{thebibliography}
\end{document}